\definecolor{labelkey}{rgb}{0.0, 0.8, 0.3}
\numberwithin{equation}{section}
\definecolor{ggreen}{rgb}{0.0, 0.5, 0.3}
\definecolor{rred}{rgb}{0.65, 0.2, 0.2}
\definecolor{bblue}{rgb}{0.0, 0.0, 1}
\newtheorem{thm}{Theorem}
\newtheorem*{thm*}{Theorem}
\newtheorem*{ass*}{Assumption}
\newtheorem{rem}[thm]{Remark}
\newtheorem{defi}[thm]{Definition}
\newtheorem{lem}[thm]{Lemma}
\newtheorem{cor}[thm]{Corollary}
\newtheorem{pro}[thm]{Proposition}
\newcommand{\cC}{\mathcal{C}}
\newcommand{\cS}{\mathcal{S}} 
\renewcommand{\P}{P}	%again
\newcommand{\Pn}{{\P}_n}	%empirical measure
\renewcommand{\sep}{\zeta} % notation for lower bound on determinant
\newcommand{\E}{\mathbb{E}}
\newcommand{\gradF}{\nabla F}
\newcommand{\gradG}{\nabla G}
\newcommand{\bs}{b^\star}	%main barycenter
\newcommand{\cP}{\mathcal{P}}
\newcommand{\R}{\mathbb R}
\newcommand{\eps}{\varepsilon}
\renewcommand{\phi}{\varphi}
\renewcommand{\ge}{\geqslant}
\renewcommand{\le}{\leqslant}
\newcommand{\ud}{\mathrm{d}}
\DeclareMathOperator{\Tr}{Tr}
\DeclareMathOperator{\dom}{dom}
\DeclareMathOperator{\id}{id}
\DeclareMathOperator{\interior}{int}
\DeclareMathOperator{\supp}{supp}
\DeclareMathOperator{\var}{var}
\DeclareMathOperator{\Div}{div}
\DeclarePairedDelimiter\norm{\lVert}{\rVert}
\newcommand{\mbb}[1]{\mathbb{#1}}
\newcommand{\mc}[1]{\mathcal{#1}}
\newcommand{\msf}[1]{\mathsf{#1}}
\newcommand{\on}[1]{\operatorname{#1}}
\definecolor{pink}{cmyk}{0, 1, 0, 0} 
\definecolor{cutepinky}{cmyk}{0, 0.9, 0, 0} 
\newcommand{\ndpr}[1]{{\color{pink}{[PR: #1]}}}
\newcommand{\as}[1]{{\color{ggreen}[Austin]\, #1}}
\newcommand{\scnote}[1]{{\color{orange}[Sinho: #1]\,}}
\begin{document}

\begin{frontmatter}

	\title{Gradient descent algorithms for Bures-Wasserstein barycenters}
	\runtitle{Gradient descent for Bures-Wasserstein barycenters}

	\author{ \fnms{Sinho} \snm{Chewi}\thanksref{}\ead[label=sc]{schewi@mit.edu},
		\fnms{Tyler} \snm{Maunu}\ead[label=tm]{maunut@mit.edu},
		\fnms{Philippe} \snm{Rigollet}\ead[label=rigollet]{rigollet@math.mit.edu},
			~and~
		\fnms{Austin J.} \snm{Stromme}\ead[label=ajs]{astromme@mit.edu}
	}
	%\affiliation{Central Marseille, Higher School of Economics and Massachusetts Institute of Technology}

%	\thankstext{t2}{Supported by NSF awards IIS-1838071, DMS-1712596 and DMS-TRIPODS-1740751; ONR grant N00014-17- 1-2147 and grant 2018-182642 from the Chan Zuckerberg Initiative DAF.}

	\address{{Sinho Chewi}\\
		{Department of Mathematics} \\
		{Massachusetts Institute of Technology}\\
		{77 Massachusetts Avenue,}\\
		{Cambridge, MA 02139-4307, USA}\\
		 \printead{sc}
	}
	
		\address{{Tyler Maunu}\\
		{Department of Mathematics} \\
		{Massachusetts Institute of Technology}\\
		{77 Massachusetts Avenue,}\\
		{Cambridge, MA 02139-4307, USA}\\
		 \printead{tm}
	}

	\address{{Philippe Rigollet}\\
		{Department of Mathematics} \\
		{Massachusetts Institute of Technology}\\
		{77 Massachusetts Avenue,}\\
		{Cambridge, MA 02139-4307, USA}\\
		 \printead{rigollet}
	}
	\address{{Austin Stromme}\\
		{Department of EECS} \\
		{Massachusetts Institute of Technology}\\
		{77 Massachusetts Avenue,}\\
		{Cambridge, MA 02139-4307, USA}\\
		\printead{ajs}
	}

\runauthor{Chewi, Maunu, Rigollet, \& Stromme}

\begin{abstract}
We study first order methods to compute the barycenter of a probability distribution $P$ over the space of probability measures with finite second moment. We develop a framework to derive global rates of convergence for both gradient descent and stochastic gradient descent despite the fact that the barycenter functional is not geodesically convex. Our analysis overcomes this technical hurdle by employing a Polyak-\L{}ojasiewicz (PL) inequality and relies on tools from optimal transport and metric geometry. In turn, we establish a PL inequality when $P$ is supported on the Bures-Wasserstein manifold of Gaussian probability measures. It leads to the first global rates of convergence for first order methods in this context. 

% We study first order methods to compute the barycenter of a probability distribution over Wasserstein
% space. We develop a framework to analyze the convergence of gradient descent and stochastic gradient descent for a general barycenter functional, based on a Polyak- PL inequality and an improved variance inequality. To achieve concrete rates of convergence these results to the
% Bures-Wasserstein manifold and
% derive global rates of convergence for both gradient descent and stochastic gradient descent despite the fact that the barycenter functional is not geodesically convex. 

%Our analysis overcomes this technical hurdle by developing a Polyak-\L{}ojasiewicz (PL) inequality, using tools from 

%\scnote{I changed ``non-Euclidean data'' to ``geometric data'' because I think it is more correct.}
\end{abstract}

%\ndpr{rewrite}

	%\begin{keyword}[class=AMS]
	%	\kwd[Primary ]{}
	%	%
	%	\kwd[; secondary ]{}
	%\end{keyword}
	%\begin{keyword}[class=KWD]
	%\end{keyword}

\end{frontmatter}
\section{Introduction}

We consider the following statistical problem. We observe $n$ independent copies $\mu_1,\dotsc,\mu_n$ of a random probability measure $\mu$ over $\R^D$. Assume furthermore that $\mu \sim \P$, where $P$ is an unknown distribution over probability measures. We wish to output a single probability measure on $\R^D$, $\bar \mu_n$, which represents the \emph{average} measure under $\P$ in a suitable sense. For example, the measures $\mu_1,\dotsc,\mu_n$ may arise as representations of images, in which case the average of the measures with respect to the natural linear structure on the space of signed measures is unsuitable for many applications~\cite{cuturi2014barycenters}.
Instead, we study the \emph{Wasserstein barycenter}~\cite{agueh2011barycenter} which has been proposed in the literature as a more desirable notion of average because it incorporates the geometry of the underlying space. Wasserstein barycenters have been applied in a broad variety of areas, including  graphics, neuroscience, statistics, economics, and algorithmic fairness~\cite{carlier2010matching, rabin2011wasserstein,  rabin_papadakis_2015,
solomon2015convolutional, gramfort2015fast, bonneel2016wasserstein, srivastava2018scalable, legouic2020fairness}.

To formally set up the situation, let $\mc P_2(\R^D)$ be the set of all (Borel) probability measures on $\R^D$ with finite second moment, and let $\mc P_{2,\rm ac}(\R^D)$ be the subset of those measures in $\mc P_2(\R^D)$ that are absolutely continuous with respect to the Lebesgue measure on $\R^D$ and thus admit a density. When endowed with the \textit{2-Wasserstein metric}, $W_2$, this set forms a geodesic metric space $(\mc P_{2,\rm ac}(\R^D),W_2)$.
%Throughout this paper, we assume that $\P$ is a distribution over measures that is supported on a subset of $\mc P_{2,\rm ac}(\R^D)$ that consists only of certain multivariate Gaussian measures.
We denote by $P_n$ the empirical distribution of the sample $\mu_1, \dotsc, \mu_n$.

A \emph{barycenter} of $\P$, denoted $\bs$, is defined to be a minimizer of the functional
\begin{align*}
    F(b) := \frac{1}{2} \P W_2^2(b, \cdot) = \frac{1}{2} \int W_2^2(b,\cdot) \, \ud \P.
\end{align*}
A natural estimator of $\bs$ is the \emph{empirical barycenter} $\hat b_n$, defined as a minimizer of
\begin{align*}
    F_n(b) := \frac{1}{2} \Pn W_2^2(b, \cdot) = \frac{1}{2n} \sum_{i=1}^n W_2^2(b, \mu_i).
\end{align*}

% \as{Is there any way we could integrate this para more with the rest of the intro? As it is it is totally random}
% An important special case of this problem is when $P$ is supported on mean-zero non-degenerate Gaussian measures. In this case, the Gaussians can be identified with their covariance matrices, and the Wasserstein metric induces a distance metric on the space of positive definite matrices. This distance metric, known as the \emph{Bures metric} (or the \emph{Bures-Wasserstein metric}), is equivalent to a Riemannian metric on the manifold of positive definite matrices, and the resulting Riemannian structure is known as the \emph{Bures manifold}~\cite{bhatia2019bures, modin2017matrix}.
% The name of the Bures manifold originates from quantum physics and quantum information theory where it is used to model the space of density matrices~\cite{bures1969extension}.

\iffalse
In this example, we calculate the Bures distance squared between a matrix $C$ and points along a geodesic $\gamma$ that runs between two other points $A$ and $B$. We also display the Bures distance squared between $C$ and points along the Euclidean geodesic between $A$ and $B$, which does result in a convex function
\fi

Statistical consistency of the empirical barycenter in a general context was first established in~\cite{legouic2015consistency} and further work has focused on providing effective rates of convergence for the quantity $W_2^2(\hat b_n, \bs)$.
A first step towards this goal was made in~\cite{ahidarcoutrix2018convergence} by deriving nonparametric rates of the form $W_2^2(\hat b_n, \bs) \lesssim n^{-1/D}$ when $D \ge 3$. Moreover, in the same paper~\cite{ahidarcoutrix2018convergence}, the authors establish parametric rates of the form $W_2^2(\hat b_n, \bs) \lesssim n^{-1}$  when $P$ is supported on a space of finite doubling dimension. 

An important example with this property arises when $P$ is supported on centered non-degenerate Gaussian measures, first studied by
Knott and Smith in 1994~\cite{knott1994generalization}.
In this case, Gaussians can be identified with their covariance matrices, and the Wasserstein metric induces a metric on the space of positive definite matrices. This metric, known as the \emph{Bures} or \emph{Bures-Wasserstein} metric is the distance function for a Riemannian metric on the manifold of positive definite matrices, known as the \emph{Bures manifold}~\cite{modin2017matrix,bhatia2019bures}.
The name of the Bures manifold originates from quantum physics and quantum information theory, where it is used to model the space of density matrices~\cite{bures1969extension}.
In the Bures case of the barycenter problem, more precise statistical results, including central limit theorems, are known~\cite{agueh2017centrale,kroshnin2019barycenters}. 

It is worth noting that parametric rates are also achievable in the infinite-dimensional case under additional conditions. First, it is not surprising that such rates are achievable over $(\mc P_{2}(\R),W_2)$ since this space can be isometrically
embedded in a Hilbert space~\cite{panaretos2016point, bigot2018barycenter}. Moreover, it was shown that, under additional geometric conditions, such rates are achievable for much more general infinite-dimensional spaces~\cite{legouic2019fast}, including $(\mc P_{2,\rm ac}(\R^D),W_2)$ for any $D\ge 2$.

While these  results are satisfying from a statistical perspective, they do not provide guidelines for the \emph{computation} of the empirical barycenter $\hat b_n$. In practice,
Wasserstein barycenters are estimated using iterative, first order algorithms~\cite{cuturi2014barycenters,esteban2016barycenters,backhoffveraguas2018barycenters,claici2018stochastic,zemel2019procrustes} but often lack theoretical guarantees. Recently, this line of work has provided rates of convergence for first order algorithms employed to compute the Wasserstein barycenter of distributions with a \emph{common discrete support}~\cite{guminov2019accelerated,pmlr-v97-kroshnin19a,Dvi20,lin2020fixedsupport}. In this framework, the computation of Wasserstein barycenters is a convex optimization problem with additional structure. However, first order methods can also be envisioned beyond this traditional framework by adopting a non-Euclidean perspective on optimization. This approach
is supported by the influential work of Otto~\cite{otto2001geometry} who established that Wasserstein space bears resemblance to a Riemannian manifold. In particular, one can define the Wasserstein gradient of the functional $F$, so it does indeed make sense to consider an intrinsic \emph{gradient descent}-based approach towards estimating $b^\star$. However, the convergence guarantees for such first order methods are largely unexplored.

When the distribution $P$ is supported on the Bures-Wasserstein manifold of Gaussian probability measures, gradient descent takes the form of a concrete and tractable update equation on the mean and covariance matrix of the candidate barycenter. In the population setting (where the distribution $P$ is known), such an algorithm was proposed in \'{A}lvarez-Esteban et al.~\cite{esteban2016barycenters}, where it is described as a fixed-point algorithm. \'{A}lvarez-Esteban et al.\ prove that the fixed-point algorithm converges to the true barycenter as the number of iterations goes to infinity. The consistency results were further generalized in~\cite{backhoffveraguas2018barycenters,zemel2019procrustes} and extended to the non-population and stochastic gradient case. However, the literature currently does not provide any rates of convergence for these first order methods.
In fact, \'{A}lvarez-Esteban et al.\ empirically observed a linear rate of convergence for the gradient descent algorithm in the Gaussian setting and left open the theoretical study of this phenomenon for future study. 
One contribution of this paper is to establish this rate of convergence (Theorem~\ref{thm:GD_short}), and we also provide multiple extensions including the first rate of convergence for stochastic gradient descent in this context.

On our way to proving rates of convergence in the Bures-Wasserstein case, we also establish results that apply to the more general setting where $P$ may not be supported on Gaussian probability measures. In particular, we establish an integrated  Polyak-\L{}ojasiewicz inequality (Lemma~\ref{lem:main}) and a new variance inequality (Theorem~\ref{thm:variance_ineq}) that are of independent interest.

\medskip {\sc Notation.} 
We denote the set of positive definite matrices by $\mbb S_{++}^D$, and the set of positive semidefinite matrices by $\mbb S_{+}^D$. We denote by $\lambda_1(\Sigma), \ldots, \lambda_D(\Sigma) \ge 0$ the eigenvalues of a matrix $\Sigma \in \mbb S_{+}^D$.
The Gaussian measure on $\R^D$ with mean $m\in \R^D$ and covariance matrix $\Sigma\in \mbb S_+^D$ is denoted $\gamma_{m,\Sigma}$. We reserve the notation $\log$ for the inverse of the Riemannian exponential map (which we review in~\ref{SEC:OT}) and use instead $\ln(\cdot)$ to denote the natural logarithm. The (convex analysis) indicator function $\iota_{\mathcal{C}}$ of a set $\mathcal{C}$ is defined by $\iota_{\mathcal{C}}(x)=0$ if $x \in \mathcal{C}$ and $\iota_{\mathcal{C}}(x)=+\infty$ otherwise. We denote by $\id$ the identity map of $\R^D$.

\section{Main results}

%
%
%\ndpr{Techniques should come later}

%\subsection{Overview}

In this paper, we develop a general machinery to study first order methods for optimizing the barycenter functional on Wasserstein space. Establishing fast convergence of first order methods is usually intimately related
to convexity. Since our setting is on the curved Wasserstein space, we 
talk about \emph{geodesic convexity} rather than the usual notion convexity employed in flat, Euclidean spaces.
Geodesic convexity has been used to study statistical efficiency in manifold constrained estimation~\cite{AudMazRuh05, Wie12} and, more recently, in optimization~\cite{Bon13, Bac14,zhangSra16a}.

Barring a direct approach to establishing quantitative convergence guarantees, the barycenter functional is actually not geodesically convex on Wasserstein space. %As such, it does not lend itself 
%even though it is along Euclidean geodesics, i.e., straight lines.
In fact, the barycenter functional may even be \emph{concave} along geodesics;  see Figure~\ref{fig:buresnoncvx}. As such, it does not lend itself to the general techniques of geodesically convex optimization. This non-convexity is a manifestation of the non-negative curvature 
of $(\mc P_2(\R^D),W_2)$ (cf. subsection~\ref{SEC:OT}) \cite{sturmnpc}.

Fortunately,
the optimization literature describes conditions for global convergence of first order algorithms even for non-convex objectives. In this work, we employ a Polyak-\L{}ojasiewicz (PL) inequality of the form \eqref{eq:pl}, which is known to yield linear convergence
for a variety of gradient methods on flat spaces even in absence of convexity~\cite{karimi2016linear}.\vskip 1em
\begin{wrapfigure}[9]{r}{0.4\textwidth}
   \vspace{-2.2em}
    \centering
    \includegraphics[width = .3\textwidth]{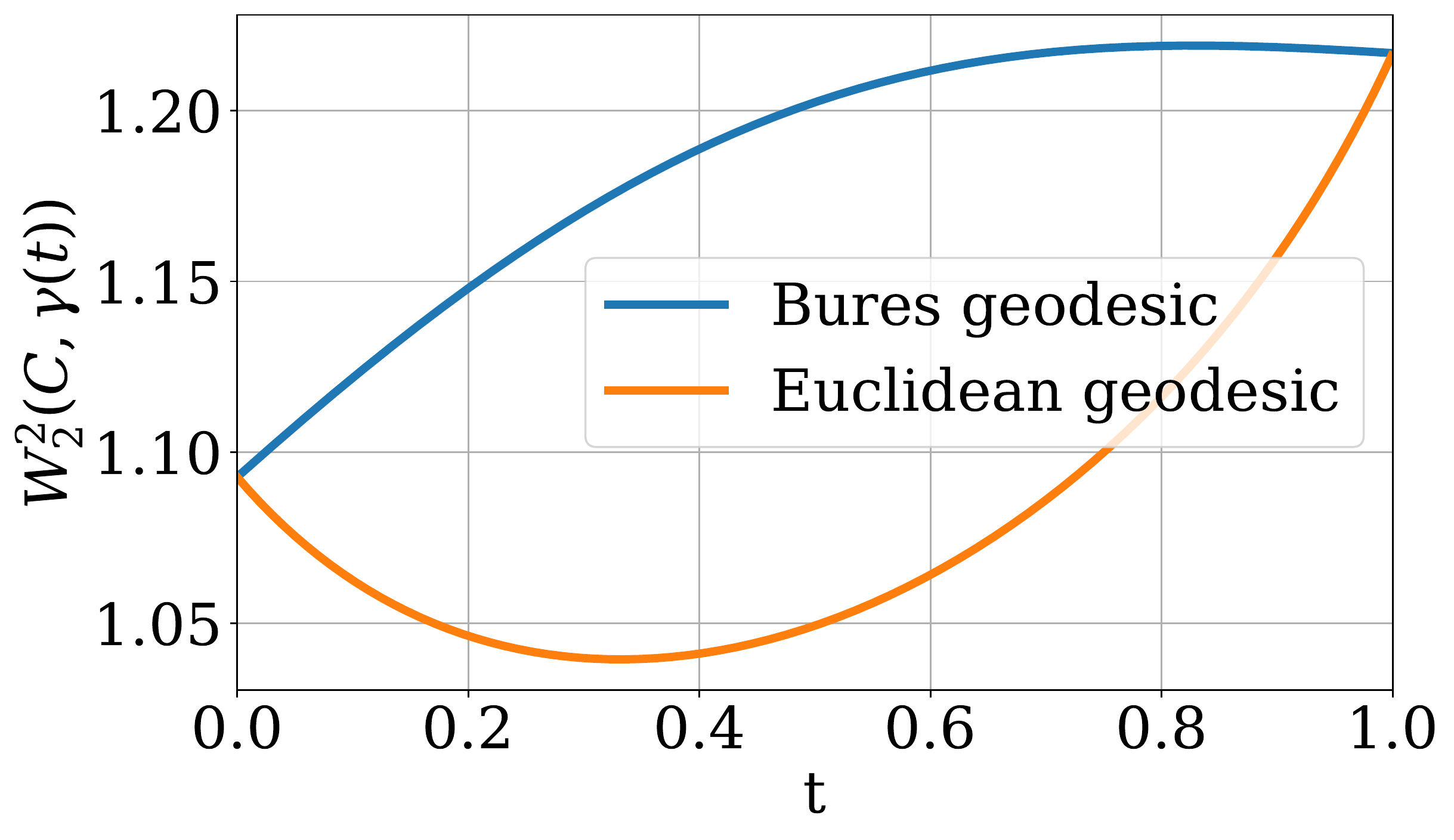}
    \caption{Example of the non-geodesic convexity of $W_2^2$.
    %Displayed is the squared
    %Bures distance along a Wasserstein geodesic and a Euclidean geodesic. 
    Details
    are given in Appendix~\ref{appendix:fig_details}.}
    \label{fig:buresnoncvx}
\end{wrapfigure}

In this paper, we study the barycenter functional
\begin{equation}
\label{EQ:bary_functional}
    G(b) := \frac{1}{2} Q W_2^2(b, \cdot) = \frac{1}{2} \int W_2^2(b,\cdot) \, \ud Q\,,
\end{equation}
for some generic distribution $Q$ with barycenter $\bar b$. This notation allows us to treat simultaneously the cases where $Q=\P$ and $Q=\P_n$, which are the situations of interest for statisticians. The case when $Q$ is an arbitrary discrete distribution supported on Gaussians has also been studied in the geodesic optimization literature~\cite{agueh2011barycenter, esteban2016barycenters, webersra2017rfw, bhatia2019bures, weber2019nonconvex, zemel2019procrustes}. Our main theorems, for gradient descent and stochastic gradient descent respectively, are stated below.
%The primary ingredients in their proof are three novel results
%for the barycenter functional, all valid
%at the general level
%of $(\mc P_2(\R^D, W_2)$, namely Theorem~\ref{thm:variance_ineq}, Lemma~\ref{lem:main}, 
%and Lemma~\ref{lem:geod_cvx_log_density}.
%\as{Ok, I used Sinho's para because it makes slightly more sense to me. I think the word "statistician" somehow helps me understand the usage of $Q$. I was also confused about the reference to Agueh and Carlier 2011, but now realize that they do indeed look at the Gaussian case.}
%This notation allows us to treat simultaneously the cases where $Q=\P$, $Q=\P_n$ and more generally, when $Q$ is a discrete distribution supported on mean-zero Gaussian measures as in the original formulation of~\cite{agueh2011barycenter}. Our main theorems, for gradient descent and stochastic gradient descent respectively, are stated below.

% our results provide  linear rates of convergence for gradient descent algorithms computing barycenters on the Bures manifold (Theorem~\ref{thm:bures_gd}) and answer a question left open in~\cite{esteban2016barycenters}. In particular, our results apply to the computation of the empirical barycenter and combined with the existing results of~\cite{ahidarcoutrix2018convergence,kroshnin2019barycenters} yield a computationally efficient procedure to estimate Wasserstein barycenters at the parametric rate.

\begin{thm}\label{thm:GD_short}
Fix $\sep \in (0,1]$ and let $Q$ be a distribution supported on mean-zero Gaussian measures whose covariance matrices $\Sigma$ satisfy $\|\Sigma\|_{\rm op} \le 1$ and $\det \Sigma \ge \sep$. Then, $Q$ has a unique barycenter~$\bar b$, and Gradient Descent (Algorithm~\ref{ALG:GD}) initialized at $b_0 \in \supp(Q)$ % and run on $G$ 
yields a sequence ${(b_T)}_{T\ge 1}$ such that 
        \begin{align*}
            W_2^2(b_T, \bar b)
            \le \frac{2}{\sep} {\Bigl(1 - \frac{\sep^2}{4}\Bigr)}^T [G(b_0) - G(\bar b)]\,.
        \end{align*}
%        \ndpr{since we have a specific functional, I would initialize at $b_0=\gamma_{0,I_D}$ and make the computation.} \scnote{I don't think there's a clean expression for $G(b_0)$. It would look something like: $G(b_0) = \frac{1}{2}\int \bigl\{D + \on{tr} \Sigma(\mu) - 2\on{tr}\bigl({\Sigma(\mu)}^{1/2}\bigr)\bigr\} \, \ud Q(\mu) = \frac{1}{2} \int \norm{{\Sigma(\mu)}^{1/2} - I_D}_{\rm F}^2 \, \ud Q(\mu)$.}\ndpr{OK, agreed. We could bound this by $D$ but that would be our only result with a dimension dependence and that doesn't look good} 
\end{thm}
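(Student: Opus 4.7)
The plan is to combine three ingredients: a Polyak-\L{}ojasiewicz (PL) inequality on the Bures-Wasserstein manifold, a one-step descent lemma for gradient descent in this geometry, and a variance-type inequality that converts a bound on the functional gap $G(b)-G(\bar b)$ into one on $W_2^2(b,\bar b)$. Existence and uniqueness of $\bar b$ follows from Agueh--Carlier once the spectra are bounded away from $0$ and $\infty$, so I would dispatch this first.

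First, I would invoke the integrated PL inequality (Lemma~\ref{lem:main}) specialized to the Bures-Wasserstein setting. The key point is that its constant depends only on the smallest eigenvalue of the covariances supporting $Q$; under the assumptions $\|\Sigma\|_{\rm op}\le 1$ and $\det\Sigma \ge \sep$, one obtains a uniform PL inequality of the form
\begin{equation*}
\|\nabla G(b)\|^2 \ge \frac{\sep^{2}}{2}\,[G(b)-G(\bar b)]
\end{equation*}
for any feasible $b$. In parallel, I would establish invariance: a single gradient descent iterate starting from a Gaussian with $\|\Sigma\|_{\rm op}\le 1$ and $\det\Sigma\ge\sep$ stays in this feasible set, so the PL constant does not deteriorate along the trajectory. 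This is where the hypothesis $b_0\in\supp(Q)$ is consumed, and it reduces to a matrix inequality for the Bures update rule $\Sigma\mapsto (\sum_i\Sigma_i^{1/2}\Sigma\Sigma_i^{1/2})^{1/2}\cdot\mathrm{(\ldots)}$ type expression.

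Second, I would prove a descent lemma $G(b_{T+1})-G(b_T) \le -\tfrac12\,\|\nabla G(b_T)\|^2$ for gradient descent with the natural unit step size along the Bures geometry. Chaining this with the PL inequality above gives the standard linear recursion
\begin{equation*}
G(b_{T+1})-G(\bar b) \le \Bigl(1-\frac{\sep^{2}}{4}\Bigr)[G(b_T)-G(\bar b)],
\end{equation*}
which iterates to $G(b_T)-G(\bar b) \le (1-\sep^{2}/4)^T[G(b_0)-G(\bar b)]$. Finally, I would appeal to the variance inequality (Theorem~\ref{thm:variance_ineq}), which in this regime yields $W_2^2(b,\bar b) \le \tfrac{2}{\sep}\,[G(b)-G(\bar b)]$; combining this with the display above produces exactly the stated bound.

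I expect the main obstacle to be the combination of the invariance/feasibility step with the descent lemma: in the curved Bures-Wasserstein geometry, the PL constant degenerates as $\det\Sigma\to 0$, so to keep a uniform rate one must simultaneously control that the iterate does not drift toward the boundary of $\mathbb{S}^D_{++}$ and that the functional decrease compensates for the nonlinearity of the exponential map. Once this is under control, assembling the three displays above into the claim is routine.
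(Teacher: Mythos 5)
Your proposal is correct and follows essentially the same route as the paper: invariance of the feasible set $\cS_\sep$ under the iterates, a uniform PL inequality with constant $\sep^2/4$ obtained from the integrated PL lemma together with the variance inequality $C_{\msf{var}}=\sep$, the unit-step descent inequality $G(b^+)-G(b)\le -\tfrac12\norm{\nabla G(b)}_b^2$, and a final application of the variance inequality to pass from the functional gap to $W_2^2$. The only cosmetic difference is that you phrase the invariance step as a direct matrix inequality for the Bures update, whereas the paper derives it from convexity of $\cS_\sep$ along generalized geodesics, which amounts to the same computation in the Gaussian case.
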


The above theorem establishes a linear rate of convergence for gradient descent and answers a question left open in~\cite{esteban2016barycenters}. Moreover, when $Q=\P_n$, combined with the existing results of~\cite{kroshnin2019barycenters, ahidarcoutrix2018convergence}, it yields a procedure to estimate Wasserstein barycenters at the parametric rate after a number of iterations that is logarithmic in the sample size~$n$. 
%
%\begin{thm}[GD on the Bures manifold]\label{thm:bures_results}\ndpr{Separate into two theorems}
%    Let $\sep > 0$.
%    Suppose that the distribution $P$ on Wasserstein space is supported on mean-zero Gaussian measures whose covariance matrices $\Sigma$ satisfy $\|\Sigma\|_{\rm op} \le 1$ and $\det \Sigma \ge \sep$.
%    Then:
%    \begin{enumerate}
%        \item (linear convergence for GD) Consider the sequence of gradient descent iterates ${(b_t)}_{t\in\N}$ with initialization $b_0 := \gamma_{0,\Sigma_1} \in \supp(P)$.
%        Then, after $T\in\N^+$ iterations,
%        \begin{align*}
%            W_2^2(b_T, b^\star)
%            \le \frac{2}{\sep} {\Bigl(1 - \frac{\sep^2}{4}\Bigr)}^T \{F(b_0) - F(b^\star)\}.
%        \end{align*}
%        \item (parametric rate for SGD) Consider the sequence of SGD iterates~\eqref{eq:sgd} with step sizes given by $\eta_t = \frac{2(t+k)+1}{\eta^2{(t+k+1)}^2}$ where $k = \lfloor 4/\sep^2 \rfloor$.
%        Then,
%        \begin{align*}
%            \E W_2^2(b_n, b^\star)
%            \le \frac{32\ln(1/\sep)}{\sep^3 n} + \frac{64 \{\E F(b_0) - F(b^\star)\}}{\sep^5 n^2}
%            \lesssim \Bigl(\ln\frac{1}{\sep}\Bigr)\Bigl(\frac{1}{\sep^3 n} \vee \frac{1}{\sep^5 n^2}\Bigr).
%        \end{align*}
%        In other words, the number of samples required to obtain a solution $\varepsilon$-close in $W_2^2$ to $b^\star$ is
%        \begin{align*}
%            n \gtrsim \frac{1}{\sep^2}\Bigl(\sqrt{\frac{\ln(1/\sep)}{\sep\varepsilon}} \vee \frac{\ln(1/\sep)}{\sep\varepsilon}\Bigr).
%        \end{align*}
%    \end{enumerate}
%\end{thm}

Still in the Gaussian case, we also show that a stochastic gradient descent (SGD) algorithm converges to the true barycenter at a parametric rate. 
\begin{thm}\label{thm:SGD_short}Fix $\sep \in (0,1]$ and let $Q$ be a distribution supported on mean-zero Gaussian measures whose covariance matrices $\Sigma$ satisfy $\|\Sigma\|_{\rm op} \le 1$ and $\det \Sigma \ge \sep$. Then, $Q$ has a unique barycenter $\bar b$, and Stochastic Gradient Descent (Algorithm~\ref{ALG:SGD})
%\ndpr{Ref to SGD + stepsize}
run on a sample of size $n+1$ from $Q$ returns a Gaussian measure $b_{n}$ such that
        \begin{align*}
            \E W_2^2(b_n, \bar b)
            \le \frac{96 \var(Q)}{n \zeta^5}\,,\quad \text{where} \quad 
\var(Q)=\int W_2^2(\cdot, \bar b) \, \ud Q\,.
%            \lesssim \Bigl(\ln\frac{1}{\sep}\Bigr)\Bigl(\frac{1}{\sep^3 n} \vee \frac{1}{\sep^5 n^2}\Bigr).
        \end{align*}
%         \scnote{I think our new bound is
%         \begin{align*}
%             \E W_2^2(b_n, \bar b)
%             \le \frac{96 \var(Q)}{n \zeta^5}\,,\quad \text{where} \quad 
% \var(Q)=\int W_2^2(\cdot, \bar b) \, \ud Q\,.
%         \end{align*}
%         }
%        In other words, the number of samples required to obtain a solution $\varepsilon$-close in $W_2^2$ to $b^\star$ is
%        \begin{align*}
%            n \gtrsim \frac{1}{\sep^2}\Bigl(\sqrt{\frac{\ln(1/\sep)}{\sep\varepsilon}} \vee \frac{\ln(1/\sep)}{\sep\varepsilon}\Bigr).
%        \end{align*}
\end{thm}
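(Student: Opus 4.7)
The strategy is to convert Theorem~\ref{thm:GD_short}'s underlying ingredients, namely the Polyak–\L{}ojasiewicz/variance inequality for $G$ on the Bures manifold, into a one-step contraction for SGD, and then to recurse with a carefully chosen step-size schedule. The key observation is that the non-convex analysis used for GD in Theorem~\ref{thm:GD_short} should yield (at the level of the variance inequality Theorem~\ref{thm:variance_ineq}) a quadratic growth of the form $G(b)-G(\bar b) \gtrsim \zeta^{2}\,W_2^2(b,\bar b)$, together with a first-order bound $\langle \nabla G(b),\,-\log_b \bar b\rangle_b \geq c\,\zeta^{2}\,W_2^2(b,\bar b)$; these are exactly the curved-space analogues of what one needs to run the standard SGD-under-PL analysis.

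First I would describe one SGD step: given $b_t$, draw $\mu_{t+1}\sim Q$ independently and set $v_t := \nabla_b \tfrac12 W_2^2(b_t,\mu_{t+1})= \id - T_{b_t}^{\mu_{t+1}}$, so $\E[v_t\mid b_t]=\nabla G(b_t)$ and $\|v_t\|_{b_t}^2 = W_2^2(b_t,\mu_{t+1})$. The update is the Wasserstein exponential $b_{t+1}=\exp_{b_t}(-\eta_t v_t)$. Since Wasserstein space has non-negative Alexandrov curvature, the appropriate one-step inequality (applied with reference point $\bar b$) is
\begin{equation*}
W_2^2(b_{t+1},\bar b) \,\leq\, W_2^2(b_t,\bar b) \,-\, 2\eta_t\,\langle v_t,\,-\log_{b_t}\bar b\rangle_{b_t} \,+\, \eta_t^{\,2}\,\|v_t\|_{b_t}^2,
\end{equation*}
which holds in any non-negatively curved Alexandrov space. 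Taking conditional expectation and using the first-order/variance inequality gives
\begin{equation*}
\E\bigl[W_2^2(b_{t+1},\bar b)\mid b_t\bigr]\,\leq\, (1-c\zeta^{2}\eta_t)\,W_2^2(b_t,\bar b)\,+\,\eta_t^{\,2}\,\E\bigl[\|v_t\|_{b_t}^2\mid b_t\bigr].
\end{equation*}

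The next step is to control the stochastic gradient norm. Since $\E[\|v_t\|_{b_t}^2\mid b_t]=\int W_2^2(b_t,\mu)\,\ud Q(\mu)=2G(b_t)$, the triangle inequality $W_2^2(b_t,\mu)\leq 2W_2^2(b_t,\bar b)+2W_2^2(\mu,\bar b)$ yields
\begin{equation*}
\E\bigl[\|v_t\|_{b_t}^2\mid b_t\bigr]\,\leq\, 2\,W_2^2(b_t,\bar b)\,+\,2\,\var(Q).
\end{equation*}
Substituting back produces a recursion of the form
\begin{equation*}
\E W_2^2(b_{t+1},\bar b)\,\leq\, (1 - c\zeta^{2}\eta_t + 2\eta_t^{\,2})\,\E W_2^2(b_t,\bar b)\,+\,2\eta_t^{\,2}\,\var(Q).
\end{equation*}

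Finally, choosing the decaying step size $\eta_t \asymp 1/(\zeta^{2}(t+t_0))$ with $t_0$ large enough that the bracket is at most $1 - c'\zeta^{2}\eta_t$, a standard telescoping/induction argument shows $\E W_2^2(b_n,\bar b)\leq C\,\var(Q)/(n\zeta^{5})$, matching the claimed bound after tracking the $\zeta$-dependence of the step size and the PL constant. The main obstacle, and the place where care is required, is the one-step inequality: justifying the non-negative curvature inequality at the level of Wasserstein space (through Alexandrov comparison, or equivalently through the first variation formula for $W_2^2$) and verifying that the quantitative constants one extracts from the PL/variance inequality of Lemma~\ref{lem:main} and Theorem~\ref{thm:variance_ineq} indeed scale as $\zeta^{2}$, so that after accounting for the $\eta_t^{2}$ noise term the final rate is proportional to $\zeta^{-5}$ rather than a worse power.
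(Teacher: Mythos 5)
There is a genuine gap at the heart of your one-step analysis. Your recursion on $\E W_2^2(b_t,\bar b)$ requires, after taking conditional expectations, the first-order bound $\langle \nabla G(b_t), -\log_{b_t}\bar b\rangle_{b_t} \ge c\,\zeta^2\, W_2^2(b_t,\bar b)$, i.e.\ a weak-quasi-convexity statement saying the gradient correlates with the direction to the optimum. This is not established in the paper and does not follow from the ingredients you cite: the PL inequality \eqref{eq:pl} lower-bounds only the \emph{norm} $\|\nabla G(b)\|_b$, and the variance inequality \eqref{eq:var_ineq} (whose Bures constant is $C_{\msf{var}}=\zeta$, not $\zeta^2$; see Theorem~\ref{thm:varineq_bures}) lower-bounds $G(b)-G(\bar b)$ by $W_2^2(b,\bar b)$ --- neither controls the \emph{angle} between $\nabla G(b)$ and $\log_b\bar b$. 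Because $G$ is not geodesically convex, that angle cannot be controlled by soft arguments; the only available first-order inequality, obtained by combining smoothness \eqref{eq:full_smoothness} at $(b,\bar b)$ with the variance inequality, gives $\langle \nabla G(b), -\log_b \bar b\rangle_b \ge \frac{\zeta-1}{2} W_2^2(b,\bar b)$, which is vacuous for $\zeta\le 1$. So the step from your curvature inequality to a contraction in $W_2^2$ is unsupported, and the rest of the argument collapses with it.

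The paper's proof (Theorem~\ref{thm:sgd}) avoids this by running the recursion on \emph{function values} rather than distances: nonnegative curvature \eqref{eq:distineq} applied with a fresh independent $\mu\sim Q$ as reference point yields $\E G(b_{t+1}) \le \E[(1+\eta_t^2)G(b_t) - \eta_t\|\nabla G(b_t)\|_{b_t}^2]$, the PL inequality (with $C_{\msf{PL}}=\zeta^2/4$ from Theorem~\ref{thm:pl_bures}) then gives a contraction in $\E G(b_t)-G(\bar b)$, and a step size chosen to solve $1-2C_{\msf{PL}}\eta_t+\eta_t^2 = \bigl(\tfrac{t+k}{t+k+1}\bigr)^2$ telescopes to $\E G(b_n)-G(\bar b)\lesssim \var(Q)/(\zeta^4 n)$; only at the very end is the variance inequality used once to convert to $\E W_2^2(b_n,\bar b)$, which is where the extra factor $\zeta^{-1}$ (hence $\zeta^{-5}$) comes from. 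Two further points you would also need: the PL inequality must hold at every (random) iterate, which the paper ensures by showing $\cS_\zeta$ is convex along (generalized) geodesics so that SGD, initialized in $\supp Q$ and moving along geodesics, stays in $\cS_\zeta$; and uniqueness of $\bar b$ comes from the variance inequality, not from the GD theorem.
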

%Therefore, SGD is a second method that is both computationally efficient and achieves parametric rates of convergence.
When applied to $Q=P$, Theorem~\ref{thm:SGD_short} shows that SGD yields an estimator $b_n$ different from the empirical barycenter $\hat b_n$ that also converges at the parametric rate to $b^\star$.  When applied to $Q=P_n$, this leads an alternative to gradient descent to estimate the empirical barycenter $\hat b_n$ that exhibits a slower convergence but that has much cheaper iterations and lends itself better to parallelization.
% \tmnote{should we also mention that this yields convergence to the empirical barycenter for with replacement algorithm? the $n$ is overloaded, but ``On the other hand, when applied to $Q = P_n$, this shows that SGD with replacement converges to the empirical barycenter with a sublinear rate."}
 
As far as we are aware, these results provide the first non-asymptotic rates of convergence for first order methods on the Bures-Wasserstein manifold.

\begin{rem}%[dimensional dependence of our results]
A natural sufficient condition of $\det \Sigma \ge \sep$ to be  satisfied is when all the eigenvalues of the covariance matrix $\Sigma$ are lower bounded by a constant $\lambda_{\min} > 0$. In this case, the parameter $\sep \ge \lambda_{\min}^D$ can be exponentially small in the dimension. Note however that, in this case, the Gaussian measure is quite degenerate in the sense that the density of $\gamma_{0, \Sigma}$ is exponentially large at 0.\end{rem}

In Figure~\ref{fig:sgd}, we present the results an experiment confirming these two results; see Appendix~\ref{sec:experiments} for more details and further numerical results.
% ; Appendix~\ref{appendix:fig_details} for more details. The left of Figure~\ref{fig:sgd} demonstrates that the SGD estimator achieves a distance to the true barycenter that is comparable to the empirical barycenter, which achieves the parametric rate. The log-log plot allows us to fit the exponent of $n$ in the rate of convergence, which is the expected $-1$ of the parametric rate.
%\tmnote{squeezing this one figure here for now, it fits most with the message of the paper}

\begin{figure}[t]
    \centering
    \includegraphics[width = .49\textwidth]{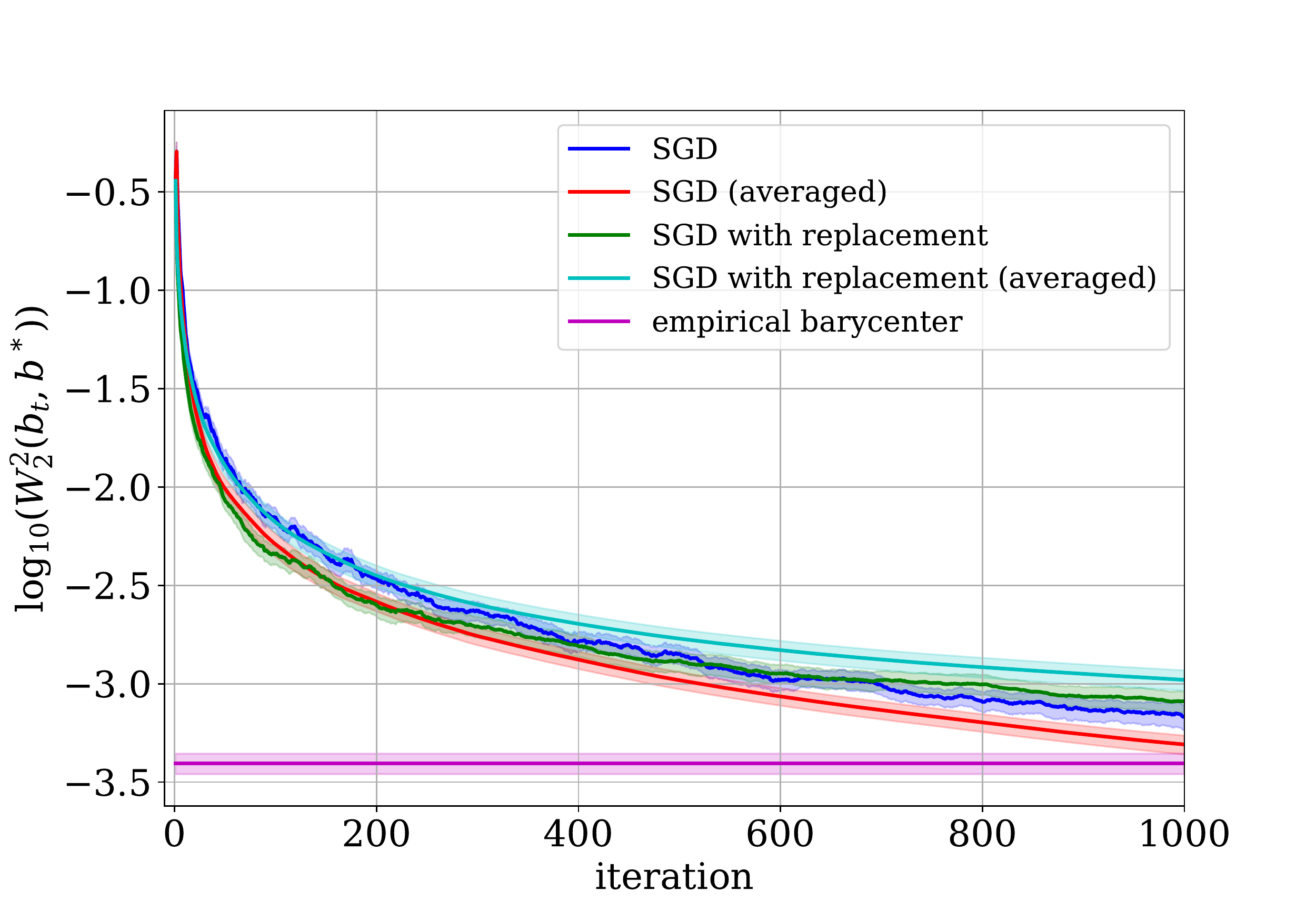}
    \includegraphics[width = .49\textwidth]{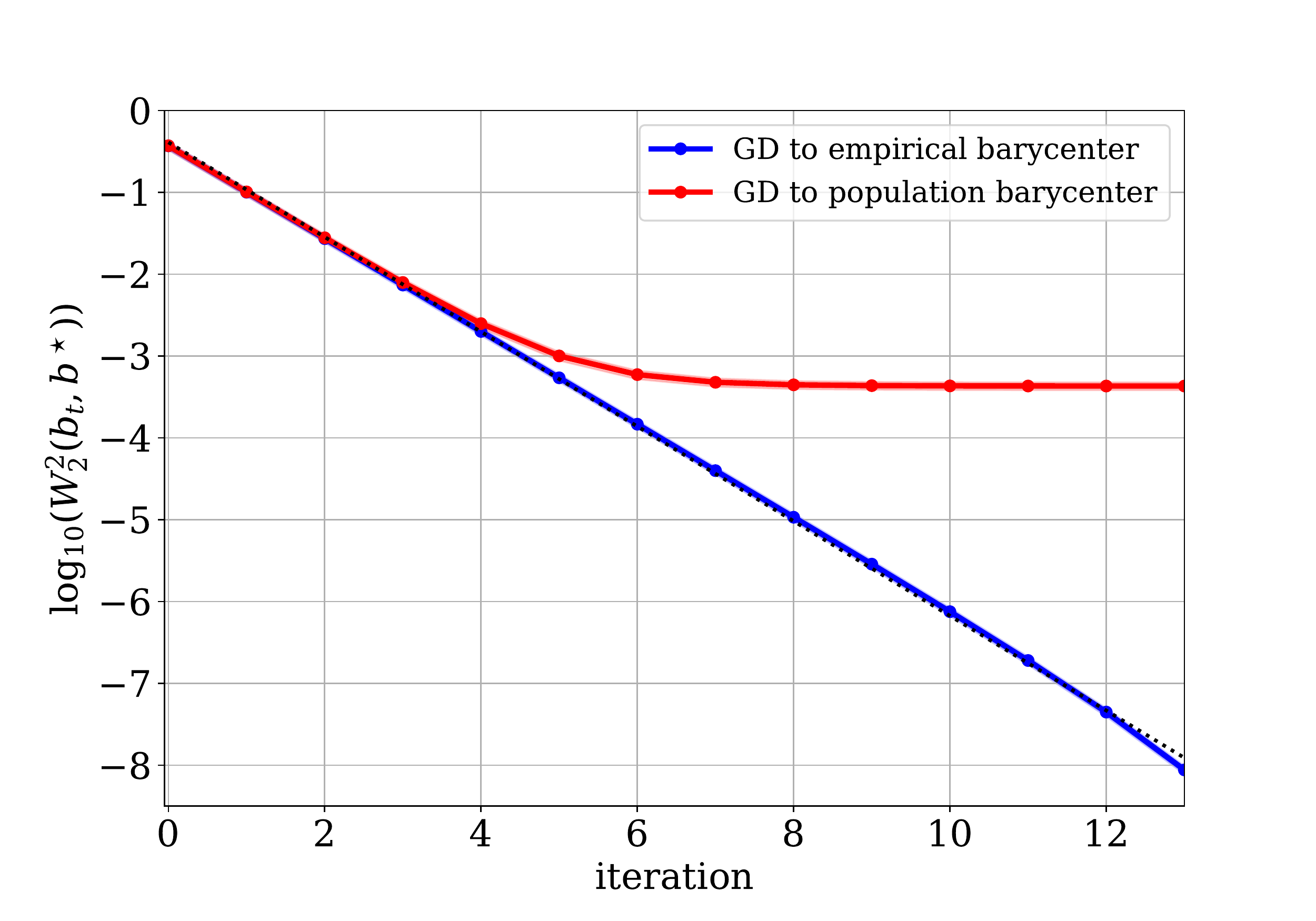}
    \caption{Left. Convergence of SGD on Bures manifold for $n=1000$, $d=3$, and and $b^\star=\gamma_{0,I_3}$.
    %The r refers to SGD run with replacement sampling.
   Right: linear convergence of GD on the same problem.}
    \label{fig:sgd}
\end{figure}

\section{Gradient descent on Wasserstein Space}\label{subsec:main_results}

In this section, we first review some background on optimal transport and describe first order algorithms on Wasserstein space. Then, we derive rates of convergence assuming a Polyak-\L{}ojasiewicz (PL) inequality. Theorems~\ref{thm:gd} and~\ref{thm:sgd} below are proved using modifications of the usual proofs in the optimization literature. Their proofs make critical use of the non-negative curvature of the Wasserstein space and are deferred to Appendix~\ref{appendix:rates}.

\subsection{Notation and background on optimal transport}
\label{SEC:OT}
In this section, we give a quick overview of the background and notation for optimal transport that is relevant for the paper. We provide a more thorough review of Riemannian geometry and the geometry of Wasserstein space in Appendix~\ref{appendix:geom}.
For each topic below, we also provide a reference to a useful presentation.
%In this subsection we will fix notation and touch on the relevant parts of
%the theory.

\noindent{\sf \underline{Wasserstein distance.}} \cite[Chapter 1]{villani2003topics}. Given a Polish space $(E,d)$, we denote by $\mc P_2(E)$ 
the collection of all (Borel) probability measures $\mu$ on $E$ such that 
$\mathbb{E}_{X\sim \mu}[{d(X,y)}^2] < \infty$ for some $y \in E$.
For two measures 
$\mu, \nu \in \mc P_2(E)$, let $\Pi_{\mu, \nu}$ be the set of couplings between $\mu$ and $\nu$, that is, the collection of probability measures
$\pi$ on $E\times E$ such that if $(X,Y) \sim \pi$, then
$X\sim\mu$ and $Y\sim \nu$. The $2$-Wasserstein distance between $\mu$ and $\nu$
is then defined as
\begin{equation}
    \label{EQ:defW2}
W_2^2(\mu, \nu) := \inf_{\pi \in \Pi_{\mu, \nu}}\mathbb{E}_{(X,Y) \sim \pi}[{d(X,Y)}^2].
\end{equation}
We are primarily interested in the cases $E = \R^D$ equipped with the standard Euclidean metric, and $E = \mc P_2(\R^D)$ equipped with the Wasserstein metric. Thus, $\mc P_2(\R^D)$ denotes the space of probability measures on $\R^D$ with finite second moment, and $\mc P_2(\mc P_2(\R^D))$ denotes the space of measures $P$ on $\mc P_2(\R^D)$ such that $\E_{\nu \sim P}W_2^2(\mu_0, \nu) < \infty$ for some, and therefore any, $\mu_0 \in \mc P_2(\R^D)$.
If $\mu \in \mc P_2(\R^D)$ is absolutely continuous w.r.t.\ the Lebesgue measure, we write $\mu \in \mc P_{2,\rm ac}(\R^D)$, and we similarly define the space $\mc P_2(\mc P_{2,\rm ac}(\R^D))$.

\noindent{\sf \underline{Transport map.}} \cite[Chapter 2]{villani2003topics}. Given a measure $\mu$ and a map $T \colon \R^D \to \R^D$, the pushforward
$T_\#\mu$ is the law of $T(X)$ when $X \sim \mu$.
For $\mu, \nu \in \mc P_{2,\rm ac}(\R^D)$, Brenier's theorem tells us that
there exists a unique optimal coupling $\pi^\star \in \Pi_{\mu,\nu}$ that achieves the minimum in~\eqref{EQ:defW2} and furthermore that it is induced by a mapping
$T_{\mu\to\nu}$, in the sense that if $X \sim \mu$ then $(X,T_{\mu\to\nu}(X)) \sim \pi^\star$. Moreover, $T_{\mu\to\nu}$ is the ($\mu$-a.e.\ unique) gradient of a convex function $\phi_{\mu\to\nu}$ such that \[ {(\nabla \phi_{\mu\to\nu})}_\# \mu = \nu. \]

\noindent{\sf \underline{Kantorovich potential.}} \cite[Chapter 2]{villani2003topics}. The $\phi_{\mu \to \nu}:\R^D \to \R$ specified in this way is called the \textit{Kantorovich potential} for the optimal transport from $\mu$ to $\nu$. For $\alpha, \beta>0$, if $\phi_{\mu\to\nu}$ is $\alpha$-strongly convex and $\beta$-smooth, in the sense that for all $x,y \in \R^D$,
\begin{align}\label{eq:sc_smooth}
    \frac{\alpha}{2}\|y-x\|^2 \le \phi_{\mu\to\nu}(y) - \phi_{\mu\to\nu}(x) - \langle \nabla \phi_{\mu\to\nu}(x), y-x \rangle \le \frac{\beta}{2}\|y-x\|^2,
\end{align}
then we say that the potential $\varphi_{\mu\to\nu}$ is \emph{$(\alpha,\beta)$-regular}.

% The $2$-Wasserstein, or \emph{Wasserstein}
% space on absolutely continuous measures
% is defined
% as
% $$
% W_{2, \rm ac}(\R^D) := (\P_{2, \rm ac}(\R^D), W_2).
% $$
\noindent{\sf \underline{Geodesics.}} \cite[Section 5.1]{villani2003topics}.
The space $\mc P_{2,\rm ac}(\R^D)$ space is a geodesic space, where the geodesics are given
by McCann's displacement interpolation.
Defining
$\mu_s := ((1 - s)\id + s T_{\mu_0 \to \mu_1})_{\#}\mu_0$, then ${(\mu_s)}_{s\in [0, 1]}$
is a constant-speed geodesic in Wasserstein space which connects $\mu_0$ to $\mu_1$. For any $\nu \in \mc P_{2,\rm ac}(\R^D)$, define the {\it generalized geodesic} with base $\nu$ and connecting $\mu_0$ to $\mu_1$  by ${(\mu_s^\nu)}_{s\in [0, 1]}$ where $\mu_s^\nu := {[(1-s) T_{\nu\to\mu_0} + s T_{\nu\to\mu_1}]}_\# \nu$.

%Consider any functional $\mc F : \mc P_{2,\rm ac}(\R^D) \to (-\infty,\infty]$ on Wasserstein space.
%We say that $\mc F$ is \emph{geodesically convex} if for all $\mu_0,\mu_1 \in \mc P_{2,\rm ac}(\R^D)$, the constant-speed geodesic $\mu : [0,1] \to \mc P_{2, \rm ac}(\R^D)$ from $\mu_0$ to $\mu_1$ satisfies
%\begin{align*}
%    \mc F(\mu_t)
%    &\le (1-t) \mc F(\mu_0) + t \mc F(\mu_1), \qquad \forall t\in [0,1].
%\end{align*}
%This notion is also called \emph{displacement convexity} after McCann \cite{mccann1997convexity}.
%
%For any $\nu \in \mc P_{2,\rm ac}(\R^D)$, define the generalized geodesic $t \in [0,1] \mapsto \mu_t^\nu := {[(1-t) T_{\nu\to\mu_0} + t T_{\nu\to\mu_1}]}_\# \nu$. We say that $\mc F$ is \emph{convex along generalized geodesics} if for all choices $\nu, \mu_0, \mu_1 \in \mc P_{2,\rm ac}(\R^D)$, 
%%if we define the generalized geodesic $t \in [0,1] \mapsto \mu_t^\nu := {[(1-t) T_{\nu\to\mu_0} + t T_{\nu\to\mu_1}]}_\# \nu$, then 
%it holds that
%\begin{align*}
%    \mc F(\mu_t^\nu)
%    &\le (1-t) \mc F(\mu_0) + t\mc F(\mu_1), \qquad t \in [0,1].
%\end{align*}
%Observe that the notion of generalized geodesic reduces to that of a geodesic when $\nu=\mu_0$ so that convexity along generalized geodesic is a stronger notion than convexity along geodesics.

% We will also consider measurable mappings $\varphi_\cdot : \mc P_{2,\rm ac}(\R^D) \to L^1(b^\star)$. For an element $\varphi_\mu \in L^1(b^\star)$, we will similarly say that $\varphi_\mu$ is $\alpha$-strongly convex and $\beta$-smooth if the inequality~\eqref{eq:sc_smooth} holds for $b^\star$-a.e.\ $x,y \in\R^D$.

\noindent{\sf \underline{Tangent bundle.}} \cite[Chapter 8]{ambrosio2008gradient}.
For $b \in \mc P_{2, \rm ac}(\R^D)$
define the ``tangent space" at $b$ by
$$
T_b \mathcal{P}_{2, \rm ac}(\R^D) := \overline{\left\{ \lambda ( \nabla \phi - \id)
\colon \lambda > 0, \; \phi \in C^{\infty}_{c}(\R^D), \; \textrm{$\phi$ convex}
\right\}}^{L^2(b)}.
$$ 
For $v \in T_b\mc P_{2, \rm ac}(\R^D)$ we write  $\|v\|_b:= \|v\|_{L^2(b)}$. Moreover, 
for any $b,b' \in \mc P_{2,\rm ac}(\R^D)$, define the map $\log_b: \mc P_{2,\rm ac}(\R^D) \to  T_b\mc P_{2, \rm ac} (\R^D)$ by $\log_b(b') := T_{b \to b'} - \id$. Reciprocally, we  define the map $\exp_b: T_b \mc P_{2,\rm ac}(\R^D) \to \mc P_{2,\rm ac}(\R^D)$ by $\exp_b(v)=({\id} + v)_{\#} b$.
% In this form we get a ``logarithm" map
% for $W_2(\R^D)$.

\noindent{\sf \underline{Convexity.}} \cite[Section 7]{agueh2011barycenter}.
We are now in a position to define two notions of convexity in Wasserstein space.
Consider any functional $\mc F : \mc P_{2,\rm ac}(\R^D) \to (-\infty,\infty]$ on Wasserstein space.
We say that $\mc F$ is \emph{geodesically convex} if for all $\mu_0,\mu_1 \in \mc P_{2,\rm ac}(\R^D)$, the constant-speed geodesic ${(\mu_s)}_{s\in [0, 1]}$ from $\mu_0$ to $\mu_1$ satisfies $\mc F(\mu_s)
    \le (1-s) \mc F(\mu_0) + s \mc F(\mu_1)$ for all $s\in [0,1]$.
%\begin{align*}
%    \mc F(\mu_t)
%    &\le (1-t) \mc F(\mu_0) + t \mc F(\mu_1), \qquad \forall t\in [0,1].
%\end{align*}
%This notion is also called \emph{displacement convexity} after McCann \cite{mccann1997convexity}. 
We say that $\mc F$ is \emph{convex along generalized geodesics} if for all choices $\nu, \mu_0, \mu_1 \in \mc P_{2,\rm ac}(\R^D)$, 
%if we define the generalized geodesic $t \in [0,1] \mapsto \mu_t^\nu := {[(1-t) T_{\nu\to\mu_0} + t T_{\nu\to\mu_1}]}_\# \nu$, then 
it holds that $\mc F(\mu_s^\nu)\le (1-s) \mc F(\mu_0) + s\mc F(\mu_1)$ for all $s\in [0,1]$.
%\begin{align*}
%    \mc F(\mu_t^\nu)
%    &\le (1-t) \mc F(\mu_0) + t\mc F(\mu_1), \qquad t \in [0,1].
%\end{align*}
Observe that the notion of generalized geodesic reduces to that of a geodesic when $\nu=\mu_0$, so that convexity along generalized geodesic is a stronger notion than convexity along geodesics. We say that a set $\mathcal{C}\subset  \mc P_{2,\rm ac}(\R^D)$ is convex along geodesics (resp. generalized geodesics) if its indicator function $\iota_{\mathcal{C}}$ is convex along geodesics (resp. generalized geodesics). Note that a set $\cC$ is convex along generalized geodesics with base $b$ if and only if the set $\log_b(\cC)$ is convex in the usual sense.

\noindent{\sf \underline{Curvature.}} \cite[Theorem 7.3.2]{ambrosio2008gradient}. Lastly, we  often use the fact that $\mc P_{2,\rm ac}(\R^D)$ is non-negatively curved in the sense of Alexandrov.
More specifically, we use the fact that for $\mu_0,\mu_1,\nu \in \mc P_{2,\rm ac}(\R^D)$, if ${(\mu_s)}_{s\in [0,1]}$ denotes the constant-speed geodesic connecting $\mu_0$ to $\mu_1$, then for all $s\in [0,1]$,
    \begin{align}\label{eq:nnc}
        W_2^2(\mu_s,\nu)
        &\ge (1-s)W_2^2(\mu_0,\nu) + sW_2^2(\mu_1,\nu) - s(1-s) W_2^2(\mu_0,\mu_1).
    \end{align}
Moreover, for any $\mu,\nu,b \in  \mc P_{2,\rm ac}(\R^D)$ the definition of Wasserstein distance implies
\begin{equation}\label{eq:distineq}
W_2(\mu, \nu) \le \norm{T_{b \to \nu} \circ T_{\mu \to b} - \id}_{L^2(\mu)}
= \norm{T_{b \to \nu} - T_{b \to \mu}}_{L^2(b)} = \norm{\log_b(\mu)- \log_b(\nu)}_b.
\end{equation}

%\as{Can people read through and make sure I didn't say anything silly about
%OT?}

\subsection{Gradient descent algorithms over Wasserstein space}

\subsubsection{Gradient descent.}
%Let $Q$ be a distribution such that $\supp(Q) \subset \supp(P)$.
Let $Q$ be a probability distribution over $(\cP_{2,\rm ac}(\R^D), W_2)$. In the sequel, we focus on the cases where $Q=P$, $Q=P_n$, or $Q$ is a weighted atomic distribution,
%of the form
%$$
%Q=\sum_{j=1}^n w_j \delta_{\mu_j}\,,
%$$
but our results apply generically to any $Q$ that satisfy the conditions stated in the theorems below.

Using the techniques of~\cite{ambrosio2008gradient}, the \emph{gradient} of the barycenter functional $G$ defined in~\eqref{EQ:bary_functional}
%\begin{equation}
%\label{EQ:bary_functional}
%G(b)= \frac{1}{2} \int W_2^2(b,\cdot) \, \ud Q
%\end{equation}
may be easily computed~\cite{zemel2019procrustes}. Analogous to the Riemannian formula
(reviewed in Appendix~\ref{appendix:geom}), the Wasserstein gradient of $G$ at $b \in \mc P_{2,\rm ac}(\R^D)$ is the mapping $\nabla G(b) : \R^D\to\R^D$ defined by
\begin{align*}
    \nabla G(b) := -Q\log_b (\cdot) = -\int (T_{b\to\mu} - \id) \, \ud Q(\mu).
\end{align*}
Denote by $\bar b$ any minimizer of $G$. 
%Measurability of the map $\log_b$ is addressed in~\cite{legouic2019fast}. \ndpr{it is completely avoided there so let's not mention it}.

The primary assumption we work with is common in the optimization literature. 
%We say that $G$ is satisfies a \emph{weak-quasi-convexity} inequality with at $b$ if
%\begin{align}\label{eq:wqc}
%    \langle -\gradG(b), \log_b \bar b \rangle_b \ge 2C_{\msf{WQC}} [G(b) - G(\bar b)]\,, \quad \text{for some }C_{\msf{WQC}}>0
%\end{align}
%Moreover, w
We say that $G$ satisfies a \emph{Polyak-\L{}ojasiewicz (PL)} inequality  at $b$ if
\begin{align}\label{eq:pl}
    \|\gradG(b)\|_b^2 \ge 2C_{\msf{PL}} [G(b) - G(\bar b)]  \quad \text{for some }C_{\msf{PL}}>0.
\end{align}
It follows from~\eqref{eq:full_smoothness} below that $C_{\mathsf{PL}} \le 1$ for any such $Q$.
%\tmnote{Maybe: "As we show later, weak-quasi-convexity together with a quadratic growth condition implies a PL inequality." and cut from the following theorem.}
%
%\ndpr{I don't really like that. I would prefer if we just said that we compute the empirical barycenter}

The \emph{gradient descent (GD)} iterates on $G$ are defined as
\begin{align}\label{eq:gd}
    b_0 \in \supp Q, \qquad b_{t+1} := \exp_{b_t}\bigl(-\gradG(b_t)\bigr)={[\id -\gradG(b_t)]}_\# b_t\quad\text{for}~t \ge 1.
\end{align}
Note that this method employs a unit step size. This is in agreement with the observation made in~\cite[Lemma 2]{zemel2019procrustes} that it leads to the best decrement in $G$ with respect to the smoothness upper bound, see Theorem~\ref{thm:gradient_drop} below.

The following theorem shows that
%weak-quasi-convexity yields convergence rates in $1/T$ whereas
a PL inequality yields a linear rate of convergence.
\begin{thm}[Rate of convergence for gradient descent]\label{thm:gd}
%Let $b_T$ denote the $T$-th iterate of gradient descent.
%    Suppose that gradient descent is run for $T$ iterations. Then, the following hold.
%The following bounds hold for the $T$th iterate $b_T$ of gradient descent.
%    \begin{enumerate}
%        \item If $G$ satisfies the weak-quasi-convexity inequality~\eqref{eq:wqc} at all the iterates ${(b_t)}_{t<T}$, then
%        \begin{align*}
%            G(b_T) - G(b) \le \frac{2G(b_0) + W_2^2(b_0,\bar b)}{4C_{\msf{WQC}} (T+1)}.
%        \end{align*}
%        \item 
        If $G$ satisfies the PL inequality~\eqref{eq:pl} at all the iterates ${(b_t)}_{t<T}$, then
        \begin{align*}
            G(b_T) - G(\bar b) \le {(1-C_{\msf{PL}})}^T [G(b_0) - G(\bar b)].
        \end{align*}
%        \item If both the weak-quasi-convexity assumption~\eqref{eq:wqc} and the variance inequality~\eqref{eq:var_ineq} hold, then the PL inequality~\eqref{eq:pl} holds with constant $C_{\msf{PL}} = C_{\msf{WQC}}^2 C_{\msf{var}}$. \ndpr{Is this linked to gradient descent?} \scnote{No, I thought it fits in here because it explains how these two assumptions are related to each other}\tmnote{should avoid such forward references, as the reader may have no idea what a variance inequality is. maybe as I state above, then have a note after we introduce variance inequality}
%    \end{enumerate}
\end{thm}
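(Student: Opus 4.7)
\textbf{Proof plan for Theorem~\ref{thm:gd}.} The plan is to combine a one-step descent (``smoothness'') lemma for the unit step-size update with the PL hypothesis, exactly in the spirit of the Euclidean proof of Polyak--\L{}ojasiewicz, and then iterate. The only place where Wasserstein geometry enters in a nontrivial way is in establishing the descent lemma.

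\emph{Step 1 (descent lemma).} I would first show that at every iterate,
\begin{equation*}
    G(b_{t+1}) - G(b_t) \le -\tfrac{1}{2}\,\|\gradG(b_t)\|_{b_t}^2.
\end{equation*}
For each $\mu \in \supp Q$, the map $(T_{b_t\to\mu}, \id - \gradG(b_t))$, applied to $X \sim b_t$, yields a (in general suboptimal) coupling of $\mu$ and $b_{t+1} = (\id - \gradG(b_t))_\# b_t$. Hence
\begin{equation*}
    W_2^2(b_{t+1},\mu) \le \|T_{b_t\to\mu} - \id + \gradG(b_t)\|_{b_t}^2
    = W_2^2(b_t,\mu) + 2\langle \log_{b_t}(\mu), \gradG(b_t)\rangle_{b_t} + \|\gradG(b_t)\|_{b_t}^2,
\end{equation*}
using $T_{b_t\to\mu}-\id = \log_{b_t}(\mu)$ and $\|T_{b_t\to\mu}-\id\|_{b_t}^2 = W_2^2(b_t,\mu)$. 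Integrating against $Q$ and recalling that $\gradG(b_t) = -\int \log_{b_t}(\mu)\,\ud Q(\mu)$, the cross term becomes $-2\|\gradG(b_t)\|_{b_t}^2$, and one gets the claimed bound. This is exactly the quadratic upper bound~\eqref{eq:full_smoothness} alluded to right after the definition of the PL inequality, and it is also what underlies the unit step-size choice in Theorem~\ref{thm:gradient_drop}.

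\emph{Step 2 (one-step contraction).} Using the PL inequality~\eqref{eq:pl} at $b_t$, the descent inequality gives
\begin{equation*}
    G(b_{t+1}) - G(\bar b) = [G(b_{t+1}) - G(b_t)] + [G(b_t) - G(\bar b)]
    \le \Bigl(1 - C_{\msf{PL}}\Bigr)\,[G(b_t) - G(\bar b)].
\end{equation*}

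\emph{Step 3 (iteration).} Applying Step~2 at $t=0,1,\dots,T-1$ (the hypothesis of the theorem is that the PL inequality holds at each of these iterates), we telescope to obtain
\begin{equation*}
    G(b_T) - G(\bar b) \le (1-C_{\msf{PL}})^T\,[G(b_0) - G(\bar b)],
\end{equation*}
which is the desired rate.

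The main (mild) obstacle is Step~1: one must check that the natural non-optimal coupling construction genuinely realizes the quadratic upper bound, and that the cross term reassembles into exactly $\|\gradG(b_t)\|_{b_t}^2$ after integration against $Q$. Once this Wasserstein analogue of the Euclidean descent lemma is in hand, Steps~2 and~3 are the usual one-line PL argument. The hypothesis $C_{\msf{PL}}\le 1$ (noted after~\eqref{eq:pl}) is what guarantees the contraction factor is in $[0,1)$, so the iteration is meaningful.
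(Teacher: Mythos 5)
Your proof is correct and, for Theorem~\ref{thm:gd} itself, it is exactly the paper's argument: combine the one-step descent bound $G(b_{t+1})-G(b_t)\le -\tfrac12\norm{\gradG(b_t)}_{b_t}^2$ with the PL inequality at $b_t$ and iterate. The only difference is in how you justify the descent bound: you re-derive it directly via the suboptimal coupling $(T_{b_t\to\mu},\id-\gradG(b_t))_\# b_t$ (the route of \'Alvarez-Esteban et al.\ and Zemel--Panaretos, and your expansion of the cross term is correct), whereas the paper obtains it as the special case $b_1=b^+$ of the smoothness inequality~\eqref{eq:full_smoothness}, itself proved from the non-negative curvature inequality~\eqref{eq:nnc}; the paper's route additionally yields~\eqref{eq:full_smoothness} for arbitrary $b_1$, which is reused elsewhere (e.g.\ in the SGD analysis and to show $C_{\msf{PL}}\le 1$), but for the gradient descent rate both justifications are equally valid.
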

%Clearly, the PL inequality yields stronger results and is a more desirable inequality. In fact, it is easy to show that the PL inequality is a strictly stronger requirement than weak-quasi-convexity. Nevertheless, we show in Section~\ref{}\ndpr{ref!!} that a weak-quasi-convexity inequality combined with a so-called \emph{variance inequality} implies a PL inequality. Variance inequalities are, in turn, pervasive to the study of functionals defined over curved spaces, and in particular the Wasserstein space of interest in the present paper~\cite{sturmnpc,ahidarcoutrix2018convergence,legouic2019fast}.

\subsubsection{Stochastic gradient descent.}

PL inequalities are also useful in the stochastic setting where we observe $n$ independent copies $\mu_1, \ldots, \mu_n$ of   $\mu\sim Q$.
In this case, we consider the natural \emph{stochastic gradient descent (SGD)} iterates defined by
\begin{align}\label{eq:sgd}
    b_0 := \mu_0, \quad b_{t+1} := \exp_{b_t}\bigl(-\eta_t \log_{b_t}(\mu_{t+1})\bigr) = {[ \id + \eta_t( T_{b_t\to\mu_{t+1}} - \id)]}_\# b_t\quad\text{for}~t=0,\dotsc,n-1,
\end{align}
where $\eta_t \in (0,1)$ denotes the step size. 
At each iteration, SGD moves the iterate along the geodesic between $b_t$ and $\mu_{t+1}$ for step size $\eta_t$.
Under the assumption of a PL inequality, we show that SGD achieves a parametric rate of convergence.

In the following result, we recall that the \emph{variance} of $Q$ is defined as
\[ \var(Q) := \int W_2^2(\bar b, \cdot) \, \ud Q = 2G(\bar b). \]

\begin{thm}[Rates of convergence for SGD]\label{thm:sgd}
Assume that there exists a constant $C_{\msf{PL}}>0$ such that the following holds: $G$ satisfies the PL inequality~\eqref{eq:pl} at all the iterates ${(b_t)}_{0\le t\le n}$ of SGD run with step size  
\begin{equation}
\label{EQ:stepsize}
\eta_t = C_{\mathsf{PL}} \Bigl(1 - \sqrt{1- \frac{2(t+k)+1}{C_{\mathsf{PL}}^2 {(t+k+1)}^2}}\Bigr) %\le \frac{2(t+k)+1}{C_{\mathsf{PL}} {(t+k+1)}^2} 
\le \frac{2}{C_{\mathsf{PL}} (t+k+1)},
\end{equation}
%\begin{equation}
%\label{EQ:stepsize}
%\eta_t = \frac{2(t+k)+1}{2C_{\msf{PL}} {(t+k+1)}^2} \,.
%\end{equation}
where we take $k = 2/C_{\msf{PL}}^2 - 1\ge 0$.
Then,
%$C_{\msf{PL}}^2(k+1)^2 \ge 2k+1$. \ndpr{there is a problem with this constant I think. Check the case $t=0$ for example}\tmnote{I adjusted, please check} Then 
% \scnote{This is how the result was stated before:
% \begin{align*}
%         \E G(b_n) - G(\bar b) \le \frac{\var(Q)}{n+k}\Big(\frac{k^2}{2(n+k)}+\frac{2}{C_{\mathsf{PL}}^2} \Big)\,.
%         %C_{\msf{PL}}^2 (T+k)} + \frac{k^2 \{\E G(b_0) - G(b^\star)\}}{{(n+k)}^2}.
%     \end{align*}  %\scnote{Define $R$ here; what assumptions do we need so the iterates of SGD satisfy $f(b_t) \le \sigma^2$?}
% If everyone is okay with it, I would like to change it to the following bound. To get this, replace the $k$ by $2/C^2-1$ in the above bound, and note that $n+k \ge 2/C^2$.}
\begin{align*}
    \E G(b_n) - G(\bar b)
    &\le \frac{3\var(Q)}{C_{\msf{PL}}^2 n}.
\end{align*}
\end{thm}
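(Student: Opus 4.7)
The plan is to derive a one-step recursion for the error $a_t := \E[G(b_t) - G(\bar b)]$ and then telescope, exploiting the fact that the step size \eqref{EQ:stepsize} is tailored so that the quadratic-in-$\eta_t$ descent coefficient produces a perfect telescoping. First, for each $\mu$ in $\supp Q$, I would apply the distance inequality \eqref{eq:distineq} with reference measure $b_t$: since $b_{t+1}$ is the McCann interpolant at time $\eta_t$ along the geodesic from $b_t$ to $\mu_{t+1}$, we have $T_{b_t \to b_{t+1}} - \id = \eta_t \log_{b_t}(\mu_{t+1})$, which gives
\begin{align*}
W_2^2(b_{t+1}, \mu)
\le W_2^2(b_t, \mu) - 2\eta_t \langle \log_{b_t}(\mu), \log_{b_t}(\mu_{t+1}) \rangle_{b_t} + \eta_t^2 W_2^2(b_t, \mu_{t+1}).
\end{align*}
Integrating against $Q$ and recognizing $\int \log_{b_t}(\mu) \, \ud Q(\mu) = -\nabla G(b_t)$ yields the deterministic bound
\begin{align*}
G(b_{t+1}) \le G(b_t) + \eta_t \langle \nabla G(b_t), \log_{b_t}(\mu_{t+1}) \rangle_{b_t} + \tfrac{\eta_t^2}{2} W_2^2(b_t, \mu_{t+1}).
\end{align*}

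Taking the conditional expectation over $\mu_{t+1} \sim Q$ given $b_t$, the stochastic cross term has mean $-\eta_t \|\nabla G(b_t)\|_{b_t}^2$ by unbiasedness of the stochastic gradient, and the noise term has conditional mean $\eta_t^2 G(b_t)$ since $\E[W_2^2(b_t, \mu_{t+1}) \mid b_t] = 2 G(b_t)$. Applying the PL inequality \eqref{eq:pl} at $b_t$ and then subtracting $G(\bar b)$ produces
\begin{align*}
\E[G(b_{t+1}) - G(\bar b) \mid b_t]
\le (1 - 2\eta_t C_{\msf{PL}} + \eta_t^2)[G(b_t) - G(\bar b)] + \eta_t^2 G(\bar b).
\end{align*}
The definition of $\eta_t$ in \eqref{EQ:stepsize} is chosen precisely so that $1 - 2\eta_t C_{\msf{PL}} + \eta_t^2 = (t+k)^2/(t+k+1)^2$. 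After taking total expectation, multiplying through by $(t+k+1)^2$, and using $\eta_t \le 2/[C_{\msf{PL}}(t+k+1)]$ together with $G(\bar b) = \var(Q)/2$, the recursion becomes $(t+k+1)^2 a_{t+1} \le (t+k)^2 a_t + 2\var(Q)/C_{\msf{PL}}^2$, which telescopes from $t=0$ to $n-1$ into $(n+k)^2 a_n \le k^2 a_0 + 2n\var(Q)/C_{\msf{PL}}^2$.

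To finish, I would bound $\E a_0$ by replaying the one-step expansion with reference measure $\bar b$: first-order optimality gives $\nabla G(\bar b) = 0$, which combined with the expansion yields $G(\mu) - G(\bar b) \le \tfrac12 W_2^2(\bar b, \mu)$ for every $\mu$, so $\E a_0 \le \var(Q)/2$ since $b_0 \sim Q$. Inserting this together with $k = 2/C_{\msf{PL}}^2 - 1$ and $C_{\msf{PL}} \le 1$, the telescoped inequality simplifies to the stated $3\var(Q)/(C_{\msf{PL}}^2 n)$. The main technical obstacle is the one-step descent: because $\mc P_{2,\rm ac}(\R^D)$ is non-negatively curved, the Alexandrov inequality \eqref{eq:nnc} provides only a lower bound on $W_2^2$ along geodesics and is useless for a descent argument. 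Holding the reference measure $b_t$ fixed throughout and relying on \eqref{eq:distineq}, which is valid in any geodesic space, is what allows the stochastic cross term to reduce cleanly to $-\|\nabla G(b_t)\|_{b_t}^2$ in expectation, so that the PL inequality can be exploited essentially as in the Euclidean analysis of SGD.
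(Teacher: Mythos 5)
Your proposal is correct and follows essentially the same route as the paper's proof: the one-step expansion via \eqref{eq:distineq}, unbiasedness of the stochastic update, the PL step, the step size chosen so that $1-2C_{\msf{PL}}\eta_t+\eta_t^2 = (t+k)^2/(t+k+1)^2$ telescopes, and the initialization bound $\E G(b_0)-G(\bar b)\le \tfrac12\var(Q)$. The only cosmetic difference is that you re-derive that initialization bound directly from \eqref{eq:distineq} and $\nabla G(\bar b)=0$, whereas the paper simply invokes the smoothness inequality \eqref{eq:full_smoothness} (which it proves from the curvature bound \eqref{eq:nnc}, so your aside that \eqref{eq:nnc} is useless here is a bit too strong, though immaterial to your argument).
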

%The assumption $\sup_{t\in\N} G(b_t) \le \sigma^2$ is analogous to the assumption of bounded variance of the stochastic gradients.
%\as{Do we want to mention that the modifications actually use the NNC a fair bit?} \scnote{I think the main contribution of this work is not those minor modifications.}\ndpr{I agree with Sinho but we don't want to alienate the people who only know optimization. I suggest we don't downplay this part.}
%\as{Yeah I think we should at least mention it here, and the other
%thing is that this isn't the abstract, we are
%in the body of the work here, and a quick sentence
%about a page plus of proofs in the appendix doesn't
%seem to me to be excessively distracting}
%This bound holds, for example, with measures supported on compact sets or in the example of optimization on the Bures manifold outlined in the next Section. 
The parameter $k$ in~\eqref{EQ:stepsize} ensures that the step size is well-defined and less than 1.
%\ndpr{now, we also need to make sure that the square root is well defined}

\subsection{Properties of the barycenter functional}

Unlike results in generic optimization, this paper focuses on a specific function to optimize: the barycenter functional. In fact, this is a vast family of functionals, each indexed by the distribution $Q$ in~\eqref{EQ:bary_functional}. However, some structure is shared across this family. In the rest of this section, we extract properties that are relevant to our optimization questions: a variance inequality, smoothness, as well as an integrated PL inequality. These properties are valid for general distributions $Q$ over $\mc P_2(\R^D)$ and are specialized to the Bures manifold in the next section.
 
\subsubsection{Variance inequality.}\label{sscn:var_ineq}

Variance inequalities indicate quadratic growth of the barycenter functional around its minimum. 
%They have played an extensive role in the analysis of barycenters over curved spaces~\cite{sturmnpc,ahidarcoutrix2018convergence,legouic2019fast}.
More specifically, we say that $Q$ satisfies a \emph{variance inequality}   with constant $C_{\msf{var}} > 0$ if
\begin{align}\label{eq:var_ineq}
    G(b) - G(\bar b)
    \ge \frac{C_{\msf{var}}}{2} W_2^2(b, \bar b), \qquad \forall b \in \mc P_{2,\rm ac}(\R^D).
\end{align}
In particular,~\eqref{eq:var_ineq} implies uniqueness of $\bar b$.
The importance of variance inequalities for obtaining statistical rates of convergence for the empirical barycenter was emphasized in~\cite{ahidarcoutrix2018convergence}. In~\cite{legouic2019fast}, it is shown that an assumption on the regularity of the transport maps from the barycenter $\bar b$ implies a variance inequality.
Specifically, suppose that all of the Kantorovich potentials $\varphi_{\bar b\to\mu}$ for $\mu \in \supp Q$ are $(\alpha,\beta)$-regular in the sense of~\eqref{eq:sc_smooth}. Then, a variance inequality holds with $C_{\msf{var}} = 1 - (\beta - \alpha)$.

It turns out that a variance inequality holds without needing to assume smoothness of $\varphi_{\bar b \to \mu}$: assuming that the potential $\phi_{\bar b\to\mu}$ is $(\alpha(\mu),\infty)$-regular for each $\mu \in \supp Q$ yields a variance inequality with $C_{\msf{var}} = \int \alpha(\mu) \, \ud Q(\mu)$. The improvement here is critical for achieving global results on the Bures manifold. Moreover, when combined with the work of~\cite{ahidarcoutrix2018convergence} it yields improved statistical guarantees for the empirical barycenter.
To formally state this result, we need the notion of an \emph{optimal dual solution} for the barycenter problem. A discussion of this concept, along with a proof of the following theorem, is given in Appendix~\ref{appendix:proof_of_var}. We verify that the hypotheses of the theorem hold in the case when $Q$ is supported on non-degenerate Gaussian measures in Appendix~\ref{appendix:gaussreg}.

% \as{I added a sentence here to clarify that this improved VI is actually crucial to the global nature of our results. Delete or modify as you desire}
\begin{thm}[Variance inequality]\label{thm:variance_ineq}
Fix $Q \in \mc P_2(\mc P_{2,\rm ac}(\R^D))$ be a distribution with barycenter $\bar b \in \mc P_{2,\rm ac}(\R^D)$.
    Assume that there exists an optimal dual solution $\varphi$ for the barycenter problem w.r.t.\ $\bar b$ 
    %(in particular, it implies that $\bar b$ is the barycenter of $Q$) 
    such that, for $Q$-a.e.\ $\mu \in \mc P_{2,\rm ac}(\R^D)$, the mapping $\varphi_\mu$ is $\alpha(\mu)$-strongly convex for some measurable function $\alpha : \mc P_2(\R^D) \to \R_+$.
    %, and such that $\varphi$ satisfies the characterization~\eqref{eq:characterize_barycenter} (in particular, $b^\star$ is a barycenter of $P$).
    Then, $Q$ satisfies a variance inequality~\eqref{eq:var_ineq} with constant $$C_{\msf{var}} = \int \alpha(\mu) \, \ud Q(\mu)\,.$$
\end{thm}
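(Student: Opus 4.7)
The plan is to use the $\alpha(\mu)$-strong convexity of the dual potentials to sharpen the Fenchel-Young lower bound underlying Kantorovich duality, and then to integrate against $Q$ so that the first-order terms cancel by the barycentric optimality condition on the dual solution. I would first recall that an optimal dual solution $(\varphi_\mu)_{\mu\in\supp Q}$ consists of convex Kantorovich potentials for the transport from $\bar b$ to $\mu$, i.e.\ $(\nabla \varphi_\mu)_\# \bar b = \mu$, together with the identity $\int \varphi_\mu(\cdot)\,\ud Q(\mu) = \|\cdot\|^2/2$ (up to an additive constant). This identity is the dual form of first-order barycenter optimality $\int \nabla\varphi_\mu\,\ud Q(\mu) = \id$, i.e.\ $\nabla G(\bar b) = 0$.

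The sharpened inequality comes from the Bregman divergence bound: taking $x_0 = \nabla \varphi_\mu^*(y) = T_{\mu\to\bar b}(y)$, strong convexity yields
\begin{align*}
\varphi_\mu(x) + \varphi_\mu^*(y) \ge \langle x,y\rangle + \tfrac{\alpha(\mu)}{2}\,\|x - T_{\mu\to\bar b}(y)\|^2
\end{align*}
for $\mu$-a.e.\ $y$ and all $x$. Rewriting this as a lower bound on $\tfrac12\|x-y\|^2$ and integrating against the optimal coupling $\gamma_\mu$ between $b$ and $\mu$ gives
\begin{align*}
\tfrac{1}{2} W_2^2(b,\mu) \ge \int\bigl(\tfrac{\|\cdot\|^2}{2}-\varphi_\mu\bigr)\,\ud b + \int\bigl(\tfrac{\|\cdot\|^2}{2}-\varphi_\mu^*\bigr)\,\ud\mu + \tfrac{\alpha(\mu)}{2}\int \|x - T_{\mu\to\bar b}(y)\|^2\,\ud\gamma_\mu(x,y).
\end{align*}
Now $(x, T_{\mu\to\bar b}(y))$ pushed from $\gamma_\mu$ is a (generically suboptimal) coupling of $b$ and $\bar b$, so the last integral is at least $W_2^2(b,\bar b)$ by Brenier optimality. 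At $b = \bar b$ the same inequality becomes an equality: weak duality is tight and the final integral vanishes because $T_{\mu\to\bar b}\circ T_{\bar b \to \mu} = \id$ on $\supp \bar b$.

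Subtracting the identity at $\bar b$ from the inequality at $b$, the integrals involving $\varphi_\mu^*$ cancel, leaving
\begin{align*}
\tfrac{1}{2}\bigl[W_2^2(b,\mu) - W_2^2(\bar b,\mu)\bigr] \ge \int\bigl(\tfrac{\|\cdot\|^2}{2}-\varphi_\mu\bigr)\,\ud(b-\bar b) + \tfrac{\alpha(\mu)}{2}\, W_2^2(b,\bar b).
\end{align*}
Integrating over $Q$ and swapping order by Fubini, the first right-hand term collapses to $\int\bigl(\tfrac{\|\cdot\|^2}{2}-\int \varphi_\mu\,\ud Q(\mu)\bigr)\,\ud(b-\bar b) = 0$ by the optimality identity, while the second yields $\tfrac{1}{2}\, C_{\msf{var}}\,W_2^2(b,\bar b)$, giving the claim. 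The main obstacle is not the Bregman bookkeeping itself but the careful setup of the multi-marginal dual: constructing an optimal $(\varphi_\mu)$ whose averaging identity holds on all of $\R^D$ rather than merely $\bar b$-a.e.\ (which would be insufficient to integrate against $b - \bar b$), and verifying the measurability and integrability needed for Fubini. The absolute continuity of $\bar b$ and of $Q$-a.e.\ $\mu$ guarantees the required Brenier maps $T_{\mu \to \bar b}$ and the applicability of Fenchel-Young with equality at the optimum.
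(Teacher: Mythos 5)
Your argument is correct and is essentially the paper's own proof: sharpen the Fenchel--Young/Kantorovich lower bound by the $\alpha(\mu)$-strong convexity of $\varphi_\mu$, integrate against the optimal coupling of $b$ and $\mu$, bound the resulting term $\int\|x-T_{\mu\to\bar b}(y)\|^2\,\ud\gamma_\mu$ below by $W_2^2(b,\bar b)$ via the induced (suboptimal) coupling, and then integrate over $Q$ so the potential terms cancel through the dual optimality identity. The only difference is presentational (you subtract the tight duality identity at $\bar b$ before integrating over $Q$, whereas the paper integrates directly and evaluates both terms via the identity), and your caveat about the averaging identity holding beyond $\bar b$-a.e.\ is the same measurability point the paper handles in its definition of an optimal dual solution.
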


% \begin{rem}
%     This result highlights an important difference between the geometry of Wasserstein space and the geometry of the sphere.
%     Indeed, the variance inequality (Theorem~\ref{thm:variance_ineq}) follows from~\cite[Thm.\ 3.3, Thm.\ 3.5]{ahidarcoutrix2018convergence} under the additional assumption that the barycenter of the extended distribution $P_\lambda$ is also $b^\star$.
%     Here, $P_\lambda$ is the distribution of $\gamma_{b^\star\to\mu}(1+\lambda)$, where $\gamma_{b^\star\to\mu}$ is the constant-speed geodesic from $b^\star$ to a randomly sampled $\mu \sim P$, and $\lambda := \alpha/(1-\alpha)$ (or $\lambda = \infty$ if $\alpha \ge 1$).
%     Moreover, by constructing an example on the sphere, Ahidar-Coutrix et al.\ show that this additional assumption cannot be dropped in general~\cite[Example 3.4]{ahidarcoutrix2018convergence}.
%     The fact that we have proven a variance inequality in Wasserstein space without this additional assumption suggests that the geodesics of Wasserstein space behave quite differently from the geodesics on the sphere, and pessimistic results for statistical estimation on the sphere need not apply to estimation on $\mc P_{2,\rm ac}(\R^D)$.
% \end{rem}

\subsubsection{Smoothness.}\label{subsec:smoothness}

Recall that a convex differentiable function $f : \R^D \to \R$ is $\beta$-smooth if
\begin{align}\label{eq:euclidean_smooth}
    f(y) \le f(x) + \langle \nabla f(x), y-x \rangle + \frac{\beta}{2} \|y-x\|^2, \qquad \forall x,y \in \R^D.
\end{align}
A consequence of $\beta$-smoothness is the following inequality, which measures how much progress gradient descent makes in a single step~\cite{bubeck2015convex}.
\begin{align}\label{eq:gradient_drop}
    f\big(x - \beta^{-1} \nabla f(x)\big) - f(x) \le - \frac{1}{2\beta} \|\nabla f(x)\|^2.
\end{align}
In fact, only the latter inequality~\eqref{eq:gradient_drop} is needed for the analysis of gradient descent methods. It was noted, first in~\cite[Proposition 3.3]{esteban2016barycenters} and then in~\cite[Lemma 2]{zemel2019procrustes}, that an analogue of~\eqref{eq:gradient_drop} holds in Wasserstein space for the barycenter functional.  Below, we provide a different, more geometric proof of this fact that emphasizes the collective role of smoothness and curvature. On the way, we also establish a smoothness inequality~\eqref{eq:full_smoothness} that is used in the proof of Theorem~\ref{thm:gd} and also ensures that $C_{\mathsf{PL}}\le 1$ for any distribution $Q$ supported on $ \mc P_{2,\rm ac}(\R^D)$.
%Our proof takes the same route as~\cite{zemel2019procrustes} but is more compact.  
%%In the second proof, which is relegated to the Appendix~\ref{appendix:smoothness}, we leverage the fact that~\eqref{eq:smoothness} is a consequence of the smoothness inequality~\eqref{eq:euclidean_smooth}, and this latter inequality also holds for the barycenter function $G$ over Wasserstein space. This is ultimately a consequence of the \emph{non-negative curvature} (in the Alexandrov sense) of $(\mc P_2(\R^D), W_2)$.
%
%
%\as{If we are to only include one proof of the smoothness my preference would be to include the one derived from the Alexandrov non-negative curvature of $W_2$ in an appendix. Repeating the previous proof is not only redundant but perpetuates a bit of misconception - that the smoothness is an algebraic trick, or only "morally" related to NNC. When Sinho and I found the other proof we remarked that it was surprising that these two previous papers hadn't found it - to my mind, it's the right proof of this fact and would be a loss to not include.}

\begin{thm}\label{thm:gradient_drop}
For any $b_0, b_1 \in \mc P_{2,\rm ac}(\R^D)$ the barycenter functional satisfies the smoothness inequality
    \begin{align}\label{eq:full_smoothness}
        G(b_1) \le G(b_0) + \langle \nabla G(b_0), \log_{b_0} b_1 \rangle_{b_0} + \frac{1}{2} W_2^2(b_0,b_1).
    \end{align}
Moreover, for any $b  \in \mc P_{2,\rm ac}(\R^D)$ and $b^+ := {[\id-\gradG(b)]}_\# b$, it holds.
    \begin{align}\label{eq:smoothness}
        G(b^+) - G(b) \le -\frac{1}{2} \norm{\gradG(b)}_b^2.
    \end{align}
\end{thm}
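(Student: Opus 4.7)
The plan is to derive both inequalities from a single coupling principle: if $S, T : \R^D \to \R^D$ are maps with $S_\# b = \nu_1$ and $T_\# b = \nu_2$, then $(S, T)_\# b$ is a coupling of $(\nu_1, \nu_2)$, yielding $W_2^2(\nu_1, \nu_2) \le \norm{S - T}_{L^2(b)}^2$. When $S, T$ are the optimal transport maps this recovers~\eqref{eq:distineq}, which is itself a reflection of the non-negative curvature of $(\cP_{2,\mathrm{ac}}(\R^D), W_2)$. Both~\eqref{eq:full_smoothness} and~\eqref{eq:smoothness} then reduce to expanding a squared $L^2(b)$ norm, integrating against $Q$, and recognizing the identity $\nabla G(b) = -\int \log_b(\mu)\,\ud Q(\mu)$.

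For~\eqref{eq:full_smoothness}, I would fix $\mu \in \supp Q$ and apply~\eqref{eq:distineq} with base $b_0$ and targets $\mu, b_1$ to obtain
\begin{align*}
W_2^2(b_1, \mu) \le \norm{\log_{b_0}(\mu) - \log_{b_0}(b_1)}_{b_0}^2.
\end{align*}
Expand the right-hand side, use the identity $\norm{\log_{b_0}(\nu)}_{b_0}^2 = W_2^2(b_0, \nu)$ for $\nu = \mu$ and $\nu = b_1$, divide by $2$, and integrate against $Q$:
\begin{align*}
G(b_1) \le G(b_0) - \Bigl\langle \int \log_{b_0}(\mu)\,\ud Q(\mu),\ \log_{b_0}(b_1)\Bigr\rangle_{b_0} + \tfrac{1}{2} W_2^2(b_0, b_1).
\end{align*}
The gradient identity turns the inner-product term into $\langle \nabla G(b_0), \log_{b_0}(b_1)\rangle_{b_0}$, which is~\eqref{eq:full_smoothness}.

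For~\eqref{eq:smoothness}, it is tempting to substitute $b_1 = b^+$ into~\eqref{eq:full_smoothness}, but this fails: the map $\id - \nabla G(b)$ pushes $b$ to $b^+$ but is generically \emph{not} the gradient of a convex function, so it cannot be identified with the optimal-transport direction $\log_b(b^+)$. The fix is to invoke the coupling principle at the level of each $\mu$, using the (possibly suboptimal) pair of maps $T := \id - \nabla G(b)$ and $T_{b \to \mu}$, both based at $b$. Since $T_\# b = b^+$, this yields
\begin{align*}
W_2^2(b^+, \mu) \le \norm{T - T_{b \to \mu}}_{L^2(b)}^2 = \norm{\nabla G(b)}_b^2 + 2 \langle \nabla G(b), \log_b(\mu)\rangle_b + W_2^2(b, \mu).
\end{align*}
Dividing by $2$, integrating against $Q$, and applying the gradient identity collapses the cross term to $-\norm{\nabla G(b)}_b^2$, giving $G(b^+) - G(b) \le \tfrac{1}{2}\norm{\nabla G(b)}_b^2 - \norm{\nabla G(b)}_b^2 = -\tfrac{1}{2}\norm{\nabla G(b)}_b^2$.

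The main conceptual hurdle is exactly this subtlety in~\eqref{eq:smoothness}: recognizing that it does not follow mechanically from~\eqref{eq:full_smoothness} because the update direction $-\nabla G(b)$ need not be an optimal transport direction. Once one accepts using a \emph{suboptimal} coupling in the second argument, the computation mirrors the familiar Euclidean descent lemma, with~\eqref{eq:distineq} (i.e.\ non-negative curvature) replacing the Taylor expansion used in the flat case.
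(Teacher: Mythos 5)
Your proof is correct, but it follows a different route from the paper's. For~\eqref{eq:full_smoothness}, the paper works along the constant-speed geodesic $(b_s)_{s\in[0,1]}$ from $b_0$ to $b_1$, uses the non-negative curvature inequality~\eqref{eq:nnc}, divides by $s$, and passes to the limit $s\to 0^+$ via dominated convergence and the derivative formula of \cite[Proposition 7.3.6]{ambrosio2008gradient} to produce the term $\langle \gradG(b_0),\log_{b_0}b_1\rangle_{b_0}$. You instead apply the coupling bound~\eqref{eq:distineq} with base $b_0$ and expand the square exactly, then integrate in $\mu$; this is more elementary (no limiting argument, no appeal to the differentiability of $s\mapsto W_2^2(b_s,\mu)$), at the price of invoking the suboptimal-coupling inequality for every $\mu$ rather than the curvature inequality once. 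Both arguments encode the same geometric fact, and your per-$\mu$ coupling computation for~\eqref{eq:smoothness} is likewise valid and self-contained.

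One factual correction, though it does not affect the validity of your argument: your stated reason for not substituting $b_1=b^+$ into~\eqref{eq:full_smoothness} is wrong. The map $\id-\gradG(b)=\int T_{b\to\mu}\,\ud Q(\mu)$ is an average of gradients of convex Kantorovich potentials, hence equals $\nabla\bar\phi$ with $\bar\phi:=\int \phi_{b\to\mu}\,\ud Q(\mu)$ convex (this is used explicitly in the proof of Lemma~\ref{lem:main}). By Brenier's theorem it is therefore the \emph{optimal} map from $b$ to $b^+$, so $\log_b(b^+)=-\gradG(b)$ and $W_2^2(b,b^+)=\norm{\gradG(b)}_b^2$. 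This is precisely how the paper deduces~\eqref{eq:smoothness}: it is the direct substitution $b_0=b$, $b_1=b^+$ in~\eqref{eq:full_smoothness}, which you claimed fails. Your suboptimal-coupling workaround is fine (it never needs optimality of $\id-\gradG(b)$), but the shortcut you dismissed is available and is the paper's actual second step.
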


\begin{proof}
% This result can be proved in many ways and already appears in~\cite{zemel2019procrustes}. Here, we propose a different, more geometric proof that emphasizes the collective role of smoothness and curvature.
    Let ${(b_s)}_{s\in [0,1]}$ be the constant-speed geodesic between arbitrary $b_0, b_1 \in \mc P_{2, \rm ac}(\R^D)$.
    From the non-negative curvature inequality~\eqref{eq:nnc}, it holds that for any $s \in (0,1]$,
    \begin{align*}
        \int \frac{W_2^2(b_s,\mu) - W_2^2(b_0,\mu)}{s} \, \ud  Q(\mu)
        &\ge \int [W_2^2(b_1,\mu) - W_2^2(b_0,\mu)] \, \ud Q(\mu) - (1-s)W_2^2(b_0,b_1).
    \end{align*}
    By dominated convergence, the left-hand side converges to
    $$
    \int   \frac{\ud}{\ud s}W_2^2(b_s,\mu)\big|_{s=0_+}\, \ud  Q(\mu)= -2 \int   \langle  T_{b_0\to \mu}-\id,T_{b_0\to b_1} -\id\rangle_{L_2(b_0)}\, \ud  Q(\mu)=2\langle \nabla G(b_0), \log_{b_0}(b_1)\rangle_{b_0}\,,
    $$
    where in the first identity, we used the characterization of~\cite[Proposition 7.3.6]{ambrosio2008gradient}.
%    The triangle inequality yields 
%    $$
%    \frac{|W_2^2(b_s,\nu) - W_2^2(b_0,\nu)|}{s} \le 2W_2(b_0,b_1) W_2(b_0,\nu) + W_2^2(b_0,b_1)
%    $$ 
%    and the right-hand side is an integrable function of $\nu$, so we apply dominated convergence with $s \to 0$.
%    By~\cite[Proposition 7.3.6]{ambrosio2008gradient}, it follows that
%    \begin{align*}
%        2\langle \nabla F(b_0), \log_{b_0} b_1 \rangle_{b_0}
%        &\ge 2\{F(b_1) - F(b_0)\} - W_2^2(b_0,b_1),
%    \end{align*}
Rearranging terms yields~\eqref{eq:full_smoothness}.

Noticing that $W_2^2(b,b^+)=\norm{-\gradG(b)}_b^2$, Theorem~\ref{thm:gradient_drop} is now an immediate consequence of~\eqref{eq:full_smoothness} applied to $b_0 = b$ and $b_1 = b^+ $.
\end{proof}
%   A quick calculation using the non-negative curvature~\eqref{eq:distineq} shows that
%    \begin{align*}
%        QW_2^2(b^+,\cdot)
%        &\le \int\norm{\log_b b^+ -\log_b \mu}_b^2 \, \ud Q(\mu)
%        = \int \norm{Q\log_b - \log_b \mu}_b^2 \, \ud Q(\mu) \\
%        &= \int \norm{\log_b \mu}_b^2 \, \ud Q(\mu) - \norm{Q\log_b}_b^2
%        = QW_2^2(b, \cdot) - \norm{\gradG(b)}_b^2.
%    \end{align*}
%\end{proof}

% \ndpr{\Huge Here}

\subsubsection{An integrated PL inequality.}

The main technical hurdle of this work is to provide sufficient conditions under which the PL inequality holds.
The following lemma, proved in Appendix~\ref{sscn:main_lemma}, is our main device to establish PL inequalities.

\begin{lem}\label{lem:main}
Let $Q$ satisfy a variance inequality with constant $C_{\msf{var}}$ and let $b\in\mc P_{2,\rm ac}(\R^D)$ be such that the barycenter $\bar b$ of $Q$ is absolutely continuous w.r.t.\ $b$.
Assume further the following measurability conditions: there exists a measurable mapping $\phi : \mc P_2(\R^D) \times \R^D \to \R \cup \{\infty\}$, $(\mu,x)\mapsto \phi_{b\to\mu}(x)$, such that, for $Q$-almost every  $\mu \in \mc P_{2,\rm ac}(\R^D)$, $\phi_{b\to\mu} : \R^D \to \R \cup \{\infty\}$ is a Kantorovich potential for the optimal transport from $b$ to $\mu$.
%\ndpr{Isn't proper LSC convex automatic for the quadric cost over $\R^D$? It is for sure if $b$ is ac}
%Then, for any $b \in \mc P_{2,\rm ac}(\R^D)$ such that $b^\star$ is absolutely continuous w.r.t.\ $b$
Then, \[ G(b) - G(\bar b) \le \frac{2}{C_{\msf{var}}} {\Bigl(\int_0^1 \norm{\gradG(b)}_{L^2(b_s)} \, \ud s\Bigr)}^2, \] where ${(b_s)}_{s\in [0,1]}$ is the constant-speed $W_2$-geodesic beginning at $b_0 := b$ and ending at $b_1 := \bar b$.
\end{lem}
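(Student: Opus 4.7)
The plan is to linearize the suboptimality via Kantorovich duality, express the resulting linear functional as an integral along the Wasserstein geodesic from $b$ to $\bar b$, and then close the loop via the variance inequality.

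For any convex $\psi : \R^D \to \R \cup \{+\infty\}$, Fenchel--Young gives
\[
\tfrac12 \|x - y\|^2 \;\ge\; \tfrac12 \|x\|^2 + \tfrac12 \|y\|^2 - \psi(x) - \psi^*(y),
\]
with equality iff $y = \nabla \psi(x)$. I apply this with $\psi = \phi_{b \to \mu}$ to the optimal coupling of $(b,\mu)$ (equality) and to any coupling of $(\bar b, \mu)$ (inequality); integrating the two differences over $Q$, the $\mu$-marginal contributions cancel, and using the measurability hypothesis to define $\bar\phi(x) := \int \phi_{b \to \mu}(x)\,\ud Q(\mu)$ via Fubini, I obtain
\[
G(b) - G(\bar b) \;\le\; \int u\,\ud b - \int u\,\ud \bar b, \qquad u(x) := \tfrac12 \|x\|^2 - \bar\phi(x).
\]
The key observation is that $\nabla u = \id - \int T_{b\to\mu}\,\ud Q(\mu) = \gradG(b)$, so $u$ is a potential for the Wasserstein gradient at $b$. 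The absolute-continuity hypothesis $\bar b \ll b$ ensures that $\bar\phi$ and $\nabla \bar\phi$, which are $b$-a.e.\ defined, are also $\bar b$-a.e.\ defined, so the integrals above are legitimate.

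Next, let ${(b_s)}_{s \in [0,1]}$ be the constant-speed displacement geodesic from $b$ to $\bar b$, and set $X_s := (1-s)X + sT_{b\to\bar b}(X)$ for $X \sim b$, so that $X_s \sim b_s$. Since both endpoints of the geodesic are absolutely continuous with respect to Lebesgue, so is each $b_s$ by McCann's theorem, hence $\nabla u$ is $b_s$-a.e.\ defined along the entire curve. The fundamental theorem of calculus applied to $s \mapsto \E u(X_s)$, combined with Cauchy--Schwarz, then yields
\[
\int u\,\ud b - \int u\,\ud \bar b \;=\; -\!\int_0^1 \E\bigl\langle \gradG(b)(X_s),\, T_{b\to\bar b}(X) - X\bigr\rangle\,\ud s \;\le\; W_2(b,\bar b)\int_0^1 \|\gradG(b)\|_{L^2(b_s)}\,\ud s,
\]
where the last step uses the identity $\|T_{b\to\bar b} - \id\|_{L^2(b)} = W_2(b,\bar b)$.

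To close, the variance inequality gives $W_2(b,\bar b) \le \sqrt{2[G(b) - G(\bar b)]/C_{\msf{var}}}$; substituting into the display above, dividing through by $\sqrt{G(b)-G(\bar b)}$ (the case $G(b) = G(\bar b)$ being trivial), and squaring yields the stated bound. The principal technical obstacle is the chain-rule step: since $u$ is only a difference of convex functions, its gradient exists merely Lebesgue-a.e., and justifying the FTC computation along the geodesic requires either a smooth approximation argument or an appeal to the general theory of derivatives of absolutely continuous curves in $\mc P_{2,\rm ac}(\R^D)$ (cf.\ \cite{ambrosio2008gradient}); the hypothesis $\bar b \ll b$ is precisely what makes this workable.
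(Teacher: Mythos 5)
Your proposal is correct and takes essentially the same route as the paper: linearize via duality/Fenchel--Young to get $G(b)-G(\bar b)\le \int u\,\ud(b-\bar b)$ with $u=\tfrac12\|\cdot\|^2-\bar\phi$, integrate $\nabla u=\nabla G(b)$ along the geodesic with Cauchy--Schwarz, and close with the variance inequality. The FTC step you flag as the main obstacle is precisely what the paper delegates to its Lemma~\ref{lem:lott_villani} (a locally Lipschitz variant of a Lott--Villani lemma): one applies the fundamental theorem of calculus pathwise along the geodesics of a representing measure and then Fubini, using convexity of $\Omega=\interior\dom\bar\phi$ together with $\bar b\ll b$ to guarantee the paths stay in the region where $\bar\phi$ is locally Lipschitz.
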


%
%
%The main technical hurdle of this work is to provide sufficient conditions under which the PL inequality holds.
%The key lemma we use to accomplish this is the following, proved in Subsection~\ref{sscn:main_lemma}.
%
%\begin{lem}[main lemma]\label{lem:main}
%Suppose that a variance inequality holds with constant $C_{\msf{var}}$ (see Subsection~\ref{sscn:var_ineq}).
%Let $b\in\mc P_{2,\rm ac}(\R^D)$ be such that $b^\star$ is absolutely continuous w.r.t.\ $b$.
%We also impose the following measurability assumption: there exists a measurable mapping $\phi : \mc P_2(\R^D) \times \R^D \to \R \cup \{\infty\}$, $(\mu,x)\mapsto \phi_{b\to\mu}(x)$, such that for $P$-a.e.\ $\mu \in \mc P_{2,\rm ac}(\R^D)$, $\phi_{b\to\mu} : \R^D \to \R \cup \{\infty\}$ is a Kantorovich potential for the optimal transport from $b$ to $\mu$.
%%\ndpr{Isn't proper LSC convex automatic for the quadric cost over $\R^D$? It is for sure if $b$ is ac}
%%Then, for any $b \in \mc P_{2,\rm ac}(\R^D)$ such that $b^\star$ is absolutely continuous w.r.t.\ $b$
%Then, \[ F(b) - F(b^\star) \le \frac{2}{C_{\msf{var}}} {\Bigl(\int_0^1 \norm{\gradF(b)}_{L^2(b_t)} \, \ud t\Bigr)}^2, \] where ${(b_t)}_{t\in [0,1]}$ is the constant-speed $W_2$-geodesic beginning at $b_0 := b$ and ending at $b_1 := b^\star$.
%\end{lem}
This lemma can yield a PL inequality in quite general situations,
but the crucial issue is whether these conditions hold uniformly
for each iterate in the optimization trajectory. In the next section, we show how to turn an integrated PL inequality into a bona fide PL inequality when $Q$ is supported on certain Gaussian measures.
%
%While we resolve this question for the case of the Bures manifold, we only provide partial answers to the general case. These are gathered in Appendix~\ref{APP:density_bd}. \ndpr{ref!! Need to reorganize this appendix around this statement}

\section{Gradient descent on the Bures-Wasserstein manifold}\label{sec:bures_gradient_2descent}

Upon identifying a centered non-degenerate Gaussian measure with its covariance matrix, the Wasserstein geometry induces a Riemannian structure on the space of positive definite matrices, known as the \emph{Bures} geometry. Accordingly, we now refer to the barycenter of $Q$ as the \emph{Bures-Wasserstein barycenter}. 
%We refer readers to Section~\ref{scn:bures} for some relevant background on the Bures-Wasserstein manifold.
%\ndpr{I commented out a ref to a section on Bures that does not seem to exist}

\subsection{Bures-Wasserstein gradient descent algorithms}

We now specialize both GD and SGD when $Q$ is supported on mean-zero Gaussian measures. In this case, the updates of both algorithms take a remarkably simple form. To see this, for $m\in\R^D$, $\Sigma \in \mbb S_+^D$, let $\gamma_{m,\Sigma}$ denote the Gaussian measure on $\R^D$ with mean $m$ and covariance matrix $\Sigma$. The set of non-degenerate Gaussians constitutes
a well-behaved subset of Wasserstein space, called
the \emph{Bures-Wasserstein} manifold \cite{bures1969extension, bhatia2019bures}.
In particular, the
optimal coupling between $\gamma_{m_0, \Sigma_0}$ and
$\gamma_{m_1, \Sigma_1}$ has the explicit form
\begin{equation}\label{eq:gaussian_opt_coupling}
x \mapsto T_{\gamma_{\mu_0,\Sigma_0} \to \gamma_{\mu_1,\Sigma_1}}(x) := m_1 + \Sigma_0^{-1/2} {(\Sigma_0^{1/2}\Sigma_1 \Sigma_0^{1/2})}^{1/2}
\Sigma_0^{-1/2}(x - m_0).
\end{equation}
Observe that $T_{\gamma_{\mu_0,\Sigma_0} \to \gamma_{\mu_1,\Sigma_1}}$ is affine, and thus $\int T_{\gamma_{\mu_0,\Sigma_0} \to \gamma} \, \ud Q(\gamma)$ is affine.

%Since the transport map between two Gaussian measures is a linear map, and the pushforward of a Gaussian measure by a linear map is a Gaussian, i
This means that all of the GD (or SGD) iterates are Gaussian measures, so it suffices to keep track of the mean and covariance matrix of the current iterate. For both GD and SGD, the update equation for the descent step decomposes into two decoupled equations: an update equation for the mean, and an update equation for the covariance matrix. Moreover, the update equation for the mean is trivial, corresponding to a simple GD or SGD procedure on the objective function $m \mapsto \int \|m-m(\mu)\|^2 \, \ud Q(\mu)$, which is just mean estimation in $\R^D$.
Therefore, for simplicity and without loss of generality, we consider only mean-zero Gaussians throughout this paper and
%As we explain in Section~\ref{scn:bures}, the GD and SGD iterates are all centered Gaussian measures, so 
we simply have to write down the update equations for the covariance matrix $\Sigma_t$ of the iterate.
% The form of the updates is derived from the observation that if $\gamma_{0, \Sigma_{t - 1}}$ is our current Gaussian and we want to push-forward by the linear map $T(x) = S_tx$ then
% the resulting Gaussian $\gamma_{0, \Sigma_t}$ has covariance
% $$
% \Sigma_t = \E_{\gamma_{0, \Sigma_t}}[yy^T] = \E_{\gamma_{0, \Sigma_{t - 1}}}[S_txx^TS_t] = S_t \Sigma_{t - 1} S_t.
% $$
The resulting update equations
are summarized in Algorithms~\ref{ALG:GD} and~\ref{ALG:SGD} below.
%We
%shall use~\eqref{eq:gaussian_opt_coupling} in the following section.

%Here, we are able to circumvent the difficulties of obtaining density control discussed in the previous subsection, and we give concrete consequences of our results for the estimation of the Bures-Wasserstein barycenter. 

 %\SetAlgoLined
%  \KwData{this text}
%  \KwResult{how to write algorithm with \LaTeX2e }

%\ndpr{I am struggling with the algorithm frame here. Someone please take a look? I wrote a drat in the colt drat, which wants us to use algorithm2e but it crashes with this template. We need to reference them in "overview of results"}

%For GD, they are
%\begin{align*}
%    \Sigma_{t+1}
%    &= S_t \Sigma_t S_t, \qquad S_t = \int \Sigma_t^{-1/2} {\{\Sigma_t^{1/2} \Sigma(\mu) \Sigma_t^{1/2}\}}^{1/2} \Sigma_t^{-1/2} \, \ud Q(\mu)
%\end{align*}
%where $\Sigma(\mu)$ is the covariance matrix of $\mu$. 
%
%For SGD with step size $\eta_t$ and samples $K_1, \ldots, K_n$, the update equations are
%\begin{align*}
%    \Sigma_{t+1} = [(1-\eta_t) I_D + \eta_t \hat S_t] \Sigma_t [(1-\eta_t) I_D + \eta_t S_t], \qquad \hat S_t = K_t^{1/2} {\{K_t^{1/2} \Sigma_t K_t^{1/2}\}}^{-1/2} K_t^{1/2}.
%\end{align*}
%This is formalized in the following algorithms:
\begin{minipage}{0.48\textwidth}
\begin{algorithm}[H]
  \caption{Bures-Wasserstein GD}\label{ALG:GD}
  \begin{algorithmic}[1]
    \Procedure{Bures-GD}{$\Sigma_0, Q, T$}
      \For{$t = 1, \ldots, T$}
        \State $
        S_t \gets \int \Sigma_{t - 1}^{-1/2} {\{\Sigma_{t - 1}^{1/2} \Sigma(\mu) \Sigma_{t - 1}^{1/2}\}}^{1/2} \Sigma_{t - 1}^{-1/2} \, \ud Q(\mu)
        $
        \State $ \Sigma_{t} \gets S_t\Sigma_{t - 1} S_t$
      \EndFor
      \State \textbf{return } $\Sigma_{T}$
    \EndProcedure
  \end{algorithmic}
\end{algorithm}
\end{minipage}
\hfill
\begin{minipage}{0.48\textwidth}
\begin{algorithm}[H]
  \caption{Bures-Wasserstein SGD}\label{ALG:SGD}
  \begin{algorithmic}[1]
    \Procedure{Bures-SGD}{$\Sigma_0, {(\eta_t)}_{t = 1}^T, {(K_t)}_{t = 1}^T$}
      \For{$t = 1, \ldots, T$}
        \State $
        \hat{S}_t \gets \Sigma_{t - 1}^{-1/2} {\{\Sigma_{t - 1}^{1/2} K_t \Sigma_{t - 1}^{1/2}\}}^{1/2} \Sigma_{t - 1}^{-1/2}
        $
        \State $ \Sigma_{t} \gets ((1 - \eta_t)I_D + \eta_t
        \hat{S}_t)\Sigma_{t - 1} ((1 - \eta_t)I_D + \eta_t
        \hat{S}_t)$
      \EndFor
      \State \textbf{return } $\Sigma_{T}$
    \EndProcedure
  \end{algorithmic}
\end{algorithm}
\end{minipage}

\medskip

In the rest of this section, we prove the guarantees for GD and SGD on the Bures-Wasserstein manifold given in Theorems~\ref{thm:GD_short} and~\ref{thm:SGD_short}.
\subsection{Proof of the main results}

For simplicity, we make the following reductions: we assume that the Gaussians are centered (see previous subsection) and that the eigenvalues of the covariance matrices of the Gaussians are uniformly bounded above by $1$. The latter assumption is justified by the observation that if there is a uniform upper bound on the eigenvalues of the covariance matrices, then we can apply a simple rescaling argument (Lemma~\ref{lem:rescaling} in the Appendix).

While the centering and scaling assumptions stated above can be made without loss of generality, our results require the following regularity condition. Note that it is equivalent to a uniform upper bound on the densities of the Gaussians.

\begin{defi}[$\sep$-regular]
Fix $\sep \in (0,1]$. A distribution $Q \in \mathcal{P}_2(\R^D)$ is said to be $\sep$-regular if its support is contained in 
\begin{equation}
\label{eq:S_eta}
 \cS_\sep=\big\{\gamma_{0,\Sigma} \,:\, \Sigma \in \mbb S_{++}^D, \; \|\Sigma\|_{\rm op} \le 1, \; \det \Sigma \ge \sep\big\}.
\end{equation}
\end{defi}
Hereafter, we always assume that $Q$ is $\sep$-regular for some $\sep>0$. Under this condition, it can be shown that the barycenter of $Q$ exists and is unique (Proposition~\ref{prop:gaussian_barycenter} in the Appendix).

We begin with a brief outline of the proof.
\begin{itemize}
    \item[(i)] If we initialize gradient descent (or stochastic gradient descent) at one of the elements of the support of $Q$, then all of the iterates, all of the elements of $\supp Q$, the barycenter $\bar b$, and all of elements of geodesics between these measures are  non-denegerate Gaussians $\gamma_{0,\Sigma} \in \cS_\sep$.
%    This is an immediate consequence of the convexity along generalized geodesics of the functionals $\gamma_{0,\Sigma} \mapsto \|\Sigma\|_{\rm op}$ (Theorem~\ref{thm:geod_cvx_max_eigenvalue}) and $\gamma_{0,\Sigma} \mapsto -\sum_{i=1}^D \ln \lambda_i(\Sigma)$ (Theorem~\ref{thm:geod_cvx_log_det}).
    \item[(ii)] Using Lemma~\ref{lem:main}, we establish a PL inequality holds with a uniform constant over $ \cS_\sep$.
    \item[(iii)] The guarantees for GD and SGD on the Bures manifold follow immediately from the PL inequality and our general convergence results (Theorems~\ref{thm:gd},~\ref{thm:sgd}).
\end{itemize}

In the sequel, we use \emph{geodesic convexity} as a key tool to control the iterates of the gradient descent algorithm. We note that this discussion is not about proving some sort of geodesic convexity for our objective, which cannot hold in general. Our main interest in geodesic convexity comes from the following fact: if all of the elements of the support of $Q$ lie in a geodesically convex set $\cS_\sep$, and we initialize the algorithm at an element of $\cS_\sep$, then all of the iterates of stochastic gradient descent are
simply moving along geodesics within this set, and so remain in $\cS_\sep$. The same is true for the iterates of gradient descent, provided that we replace geodesic convexity with \emph{convexity along generalized geodesics}. Refer to Section~\ref{SEC:OT} for definitions
of these terms.
We begin with the following fact.
% \begin{thm}[{\cite{ambrosio2008gradient}}]\label{thm:ggcvx_kl}
%     The functional $D_{\rm KL}(\cdot \mmid \gamma_{0,I_D})$ is convex along generalized geodesics.
% \end{thm}
% \as{I added a sentence to direct the reader to the OT section for definitions. It may not be necessary, but I think that section's existence could easily have been forgotten by this point in the paper}

\begin{lem}\label{thm:geod_cvx_max_eigenvalue}
    For a measure $\mu \in \mc P_2(\R^D)$, let $M(\mu) := \int x\otimes x \, \ud \mu(x)$.
    Then, the functional $\mu \mapsto \|M(\mu)\|_{\rm op} = \lambda_{\max}(M(\mu))$ is convex along generalized geodesics on $\mc P_2(\R^D)$.
\end{lem}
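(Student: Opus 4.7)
The plan is to work coordinatewise on the quadratic form $u^\top M(\mu) u$ for unit vectors $u$, and exploit the fact that the generalized geodesic interpolation is affine at the level of the underlying random variables.

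First I would fix a base measure $\nu \in \cP_{2,\rm ac}(\R^D)$ and write the generalized geodesic $\mu_s^\nu = [(1-s)\,T_{\nu\to\mu_0} + s\,T_{\nu\to\mu_1}]_\#\nu$. Letting $X\sim\nu$, set $Y_0 := T_{\nu\to\mu_0}(X)$ and $Y_1 := T_{\nu\to\mu_1}(X)$, so that $Y_0\sim\mu_0$, $Y_1\sim\mu_1$, and a sample from $\mu_s^\nu$ is realized as $(1-s)Y_0 + sY_1$. Then for any unit vector $u\in\R^D$,
\begin{equation*}
u^\top M(\mu_s^\nu)\,u = \E\bigl[(\langle u,(1-s)Y_0+sY_1\rangle)^2\bigr] = \E\bigl[((1-s)\langle u,Y_0\rangle + s\langle u,Y_1\rangle)^2\bigr].
\end{equation*}

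Next I would apply the elementary convexity inequality $((1-s)a+sb)^2 \le (1-s)a^2 + sb^2$ inside the expectation with $a=\langle u,Y_0\rangle$, $b=\langle u,Y_1\rangle$. This yields
\begin{equation*}
u^\top M(\mu_s^\nu)\,u \le (1-s)\,\E[\langle u,Y_0\rangle^2] + s\,\E[\langle u,Y_1\rangle^2] = (1-s)\,u^\top M(\mu_0)\,u + s\,u^\top M(\mu_1)\,u.
\end{equation*}
Finally, taking the supremum over unit vectors $u$ on the left-hand side and using $\sup(f+g)\le \sup f + \sup g$ on the right gives
\begin{equation*}
\lambda_{\max}(M(\mu_s^\nu)) \le (1-s)\,\lambda_{\max}(M(\mu_0)) + s\,\lambda_{\max}(M(\mu_1)),
\end{equation*}
which is exactly convexity along the generalized geodesic.

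I do not expect any serious obstacle here: the argument is essentially the pointwise convexity of $t\mapsto t^2$ combined with the fact that a generalized geodesic is an \emph{affine} interpolation of transport maps evaluated against the common base measure $\nu$, so $Y_0$ and $Y_1$ are coupled (not merely comonotone marginally). The only point to be careful about is that one must use the generalized geodesic, not an arbitrary geodesic: without a common base measure one cannot couple $Y_0$ and $Y_1$ by a single $X\sim\nu$, and the pointwise convexity argument breaks down. This is consistent with the stronger notion of convexity claimed in the lemma.
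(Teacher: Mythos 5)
Your proof is correct and follows essentially the same route as the paper: the paper also reduces to convexity of $x\mapsto\langle x,e\rangle^2$ and takes a supremum over unit vectors, simply citing \cite[Proposition 9.3.2]{ambrosio2008gradient} for the convexity of $\mu\mapsto\int\langle\cdot,e\rangle^2\,\ud\mu$ along generalized geodesics, which is exactly the affine-coupling computation you carry out by hand.
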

\begin{proof}
  Let $S^{D-1}$ denote the unit sphere of $\R^D$ and observe that for any $e \in S^{D-1}$ the function $x\mapsto \langle x, e \rangle^2$ is convex on $\R^D$.
    By known results for geodesic convexity in Wasserstein space (see~\cite[Proposition 9.3.2]{ambrosio2008gradient}), the functional $\mu \mapsto \int \langle \cdot, e \rangle^2 \, \ud \mu = \langle e, M(\mu) e \rangle$ is convex along generalized geodesics in $\mc P_2(\R^D)$; hence, so is the functional $\mu \mapsto \max_{e \in S^{D-1}} \langle e, M(\mu) e \rangle = \|M(\mu)\|_{\rm op}$.
\end{proof}

The next lemma establishes convexity along generalized geodesics of $\mu \mapsto -\ln \det \Sigma(\mu)$. It follows from specializing Lemma~\ref{lem:geod_cvx_log_density} in the Appendix to the Bures-Wasserstein manifold.
\begin{lem}\label{thm:geod_cvx_log_det}
    %For $m\in\R^D$ and $\Sigma \in \mbb S_{++}^D$, let $\gamma_{m,\Sigma}$ denote the Gaussian measure with mean $m$ and covariance matrix $\Sigma$.
    The functional $\gamma_{0,\Sigma} \mapsto -\sum_{i=1}^D \ln \lambda_i(\Sigma)$ is convex along generalized geodesics on the space of non-degenerate Gaussian measures.
\end{lem}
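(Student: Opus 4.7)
The plan is to identify $-\sum_{i=1}^D \ln\lambda_i(\Sigma) = -\ln\det\Sigma$ with the negative entropy of $\gamma_{0,\Sigma}$ up to an additive constant, and then invoke the displacement convexity result Lemma~\ref{lem:geod_cvx_log_density} from the Appendix.

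Concretely, a direct computation with the Gaussian density $\rho_\Sigma$ of $\gamma_{0,\Sigma}$ yields
\begin{align*}
    \int \rho_\Sigma(x) \ln \rho_\Sigma(x) \, \ud x = -\tfrac{1}{2}\ln\det\Sigma - \tfrac{D}{2}\ln(2\pi e),
\end{align*}
so the functional of interest equals, up to a positive multiplicative constant and an additive constant, the negative entropy functional $\mc H(\mu) := \int \rho_\mu \ln \rho_\mu \, \ud x$ restricted to the centered non-degenerate Gaussians. The Appendix lemma asserts that $\mc H$ is convex along generalized geodesics on all of $\mc P_{2,\rm ac}(\R^D)$; this is a displacement convexity statement in the tradition of McCann, extended to generalized geodesics by Ambrosio-Gigli-Savar\'e, and ultimately driven by the fact that $U(r)=r\ln r$ satisfies McCann's condition (the map $r\mapsto r^D U(r^{-D}) = -D\ln r$ is convex and non-increasing on $(0,\infty)$).

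To transfer this to the Bures-Wasserstein submanifold no additional work is needed, since by the explicit formula~\eqref{eq:gaussian_opt_coupling} every Brenier map between two centered non-degenerate Gaussians is linear. Hence any generalized geodesic with Gaussian base connecting two centered non-degenerate Gaussians consists entirely of centered non-degenerate Gaussians, and convexity of $\mc H$ along generalized geodesics in $\mc P_{2,\rm ac}(\R^D)$ restricts directly to the submanifold in question. The main technical content therefore lies in Lemma~\ref{lem:geod_cvx_log_density} itself; the specialization here is a bookkeeping exercise enabled by the closure of the Gaussian class under affine pushforwards.
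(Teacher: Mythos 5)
Your argument is correct, but it is not quite the specialization the paper performs, and it slightly misstates what Lemma~\ref{lem:geod_cvx_log_density} says. That lemma is about the functional $\rho \mapsto \ln\norm{\rho}_{L^\infty}$, not the entropy $\mc H(\rho)=\int \rho\ln\rho$: the paper's route is to note that for a centered non-degenerate Gaussian the density attains its maximum at the origin, so $\ln\norm{\rho_\Sigma}_{L^\infty} = -\tfrac12\sum_{i=1}^D \ln\lambda_i(\Sigma) - \tfrac{D}{2}\ln(2\pi)$, and then restrict the generalized-geodesic convexity of $\ln\norm{\cdot}_{L^\infty}$ to the Gaussian class. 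You instead evaluate the entropy, $\mc H(\gamma_{0,\Sigma}) = -\tfrac12\ln\det\Sigma - \tfrac{D}{2}\ln(2\pi e)$, and invoke convexity of $\mc H$ along generalized geodesics. That convexity is true (it is the Ambrosio--Gigli--Savar\'e result for internal energies satisfying McCann's condition, as you note), and in fact it also follows from the pointwise inequality displayed in Lemma~\ref{lem:geod_cvx_log_density} by integrating against the base measure $b$ rather than taking the essential supremum; but as cited, the Appendix lemma does not literally assert entropy convexity, so you should either give the external reference or note this integration step. The remaining ingredient — that a generalized geodesic with Gaussian base joining two centered non-degenerate Gaussians stays within the centered non-degenerate Gaussians, because the Brenier maps~\eqref{eq:gaussian_opt_coupling} are linear with positive definite matrices and so is their convex combination — is exactly what the paper's specialization also needs, and your treatment of it is correct. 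Net comparison: the paper's sup-norm route is self-contained within the Appendix, while your entropy route leans on a standard displacement-convexity result (or the indicated integration) but is otherwise equally elementary; both reduce the lemma to evaluating a functional, convex along generalized geodesics, that equals $-c\ln\det\Sigma$ plus a constant on centered Gaussians.
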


It follows readily from Lemmas~\ref{thm:geod_cvx_max_eigenvalue} and~\ref{thm:geod_cvx_log_det} that the set $\cS_\sep$ is convex along generalized geodesics. Moreover  since SGD moves along geodesics and is initialized at $b_0 \in \supp Q \subset \cS_\sep$, then all the iterates of SGD stay in $\cS_\sep$. To show that the same holds for GD, 
% we proceed by induction \ndpr{I don't think we actually need induction here} \scnote{Agreed, it seems to me that the induction is spelling out something that's pretty simple}. First, $b_0 \in \supp Q \subset \cS_\sep$. Next assume that $b_t \in \cS_\sep$ and 
observe that the set $\log_{b_t}(\cS_\sep)$ is convex. Therefore, $-\gradG(b_t)=\int (T_{b_t \to \mu} -\id)\, \ud Q(\mu) \in \log_{b_t}(\cS_\sep)$ as a convex combination of elements in this set. This is equivalent to $b_{t+1}=\exp_{b_t}(-\gradG(b_t)) \in \cS_\sep$. These observations yield the following corollary.

\begin{cor}
The set $\cS_\sep$ is  convex along generalized geodesics and when initialized in $\supp Q$, the iterates of both GD and SGD remain in $\cS_\sep$.
\end{cor}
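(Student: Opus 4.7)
The plan is to verify convexity along generalized geodesics of $\cS_\sep$ as an intersection of sublevel sets, then use this to show the iterates of both algorithms remain in $\cS_\sep$, leveraging the fact that convex combinations in the tangent space correspond to generalized-geodesic convex combinations in Wasserstein space.

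First I would note that $\cS_\sep$ can be written as the intersection of two sublevel sets:
\[ \cS_\sep = \{\gamma_{0,\Sigma} : \|M(\gamma_{0,\Sigma})\|_{\rm op} \le 1\} \cap \{\gamma_{0,\Sigma} : -\textstyle\sum_{i=1}^D \ln \lambda_i(\Sigma) \le \ln(1/\sep)\}, \]
since $M(\gamma_{0,\Sigma}) = \Sigma$ and $\det\Sigma = \prod_i \lambda_i(\Sigma)$. Lemma~\ref{thm:geod_cvx_max_eigenvalue} shows that the first functional is convex along generalized geodesics on $\mc P_2(\R^D)$, and Lemma~\ref{thm:geod_cvx_log_det} shows the same for the second functional on the space of non-degenerate Gaussians. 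Sublevel sets of functionals convex along generalized geodesics are themselves convex along generalized geodesics, and the intersection of two such sets remains convex along generalized geodesics (the inequality defining generalized-geodesic convexity is preserved under intersection), which yields the first claim.

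For SGD, given $b_t \in \cS_\sep$ and $\mu_{t+1} \in \supp Q \subset \cS_\sep$, the update $b_{t+1}$ is precisely the point at parameter $\eta_t \in (0,1)$ along the constant-speed Wasserstein geodesic connecting $b_t$ and $\mu_{t+1}$. Since generalized geodesics reduce to ordinary geodesics when the base coincides with one endpoint, the first claim implies that this geodesic stays inside $\cS_\sep$, and hence $b_{t+1} \in \cS_\sep$. An immediate induction on $t$ gives the SGD half of the statement.

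For GD, I would invoke the observation from the notation section that $\cC$ is convex along generalized geodesics with base $b$ if and only if $\log_b(\cC)$ is convex in the usual $L^2(b)$ sense. Applied with $b = b_t$ and $\cC = \cS_\sep$, this yields that $\log_{b_t}(\cS_\sep) \subset L^2(b_t)$ is a convex set (and $Q$-a.e.\ measurable in $\mu$ via Brenier maps so the integral is well-defined as a Bochner integral in $L^2(b_t)$). Then
\[ -\gradG(b_t) = \int \log_{b_t}(\mu) \, \ud Q(\mu) \]
is a $Q$-barycenter, i.e.\ a (possibly infinite) convex combination of elements of $\log_{b_t}(\cS_\sep)$, and hence lies in $\log_{b_t}(\cS_\sep)$ by convexity and closedness of this set (the closure follows from the sublevel-set characterization above and continuity of the relevant functionals in $L^2(b_t)$). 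Therefore $b_{t+1} = \exp_{b_t}(-\gradG(b_t)) \in \cS_\sep$, and induction completes the proof.

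The only potentially delicate point is justifying that the integral $\int \log_{b_t}(\mu)\, \ud Q(\mu)$ genuinely lies in the convex set $\log_{b_t}(\cS_\sep)$ rather than merely in its closed convex hull; this requires $\log_{b_t}(\cS_\sep)$ to be closed in $L^2(b_t)$, which I expect to follow from the two sublevel-set characterizations together with lower semicontinuity of $\mu \mapsto \|M(\mu)\|_{\rm op}$ and $\gamma_{0,\Sigma} \mapsto -\sum_i \ln \lambda_i(\Sigma)$ pulled back via $\exp_{b_t}$ — a routine verification given the explicit Gaussian formulas but the one spot where I would want to be careful.
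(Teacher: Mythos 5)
Your proof is correct and follows essentially the same route as the paper: generalized-geodesic convexity of $\cS_\sep$ via Lemmas~\ref{thm:geod_cvx_max_eigenvalue} and~\ref{thm:geod_cvx_log_det}, the geodesic argument for SGD, and the convexity of $\log_{b_t}(\cS_\sep)$ together with the gradient being a convex combination (integral) of elements of that set for GD. Your added remark about closedness of $\log_{b_t}(\cS_\sep)$ in $L^2(b_t)$ to handle non-finitely-supported $Q$ is a point the paper passes over silently, and your sketch of how to verify it is sound.
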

%\as{removing the statement of convexity of $\log \|\cdot \|_{\infty}$ along barycenters is an improvement in terms of this paper but I do hope that we don't forget these statements when we need them later on...}

This completes the first step (i) of the proof. Moving on to step (ii), we get from Theorem~\ref{thm:pl_bures} that $G$ satisfies a PL inequality with constant $C_{\mathsf{PL}}=\zeta^2/4$ at all $b \in \cS_\zeta$ and in particular at all the iterates of both GD and SGD.

Combined with the general bound in Theorems~\ref{thm:gd} and the variance inequality in Theorem~\ref{thm:varineq_bures}, this completes the proof of Theorems~\ref{thm:GD_short} for GD.
To prove Theorem~\ref{thm:SGD_short}, take $k=1/C_{\mathsf{PL}}=4/\zeta^2$ so that Theorem~\ref{thm:sgd} yields
$$
 \E G(b_n) - G(\bar b) \le  \frac{48 \var(Q)}{n \zeta^4}\,.
$$
% $$
%  \E G(b_n) - G(\bar b) \le \frac{\var(Q)}{n+k}\Big(\frac{k^2}{2(n+k)}+\frac{2}{C_{\mathsf{PL}}^2} \Big)\le \frac{40 \var(Q)}{n \zeta^4}\,.
% $$
% \scnote{With the new SGD bound, we get instead
% \begin{align*}
%     \E G(b_n) - G(\bar b)
%     &\le \frac{48\var(Q)}{n\zeta^4}.
% \end{align*}}
Combining this bound with the variance inequality in Theorem~\ref{thm:varineq_bures} completes the proof of Theorem~\ref{thm:SGD_short}.

\bigskip

\noindent{\bf Acknowledgments.} \\ PR was supported by NSF awards IIS-1838071, DMS-1712596, {DMS-TRIPODS-1740751}, and ONR grant N00014-17- 1-2147. 
Sinho Chewi and Austin J. Stromme were supported by the Department of Defense (DoD) through the National Defense Science \& Engineering Graduate Fellowship (NDSEG) Program. We thank
 the anonymous reviewers for
helpful comments.

\appendix

\section{Geometry and Wasserstein space}\label{appendix:geom}
In this section we give a more detailed introduction to Riemannian
manifolds and discuss analogies to Wasserstein space which are
present throughout the paper. We refer readers to~\cite{docarmo1992riemannian} for a standard introduction to Riemannian geometry.

\subsection{Riemannian geometry}

An $n$-dimensional manifold $M$ is a topological space
which is Hausdorff, second countable,
and locally homeomorphic to $\R^n$. A smooth atlas
is a collection of smooth charts
${\{\psi_{\alpha}\}}_{\alpha \in \mathcal{A}}$ so that each $\psi_{\alpha} \colon U_{\alpha} \subset M \to \R^n$
is a homeomorphism from an open set $U_{\alpha}$ in $M$,
$M = \bigcup_{\alpha \in \mathcal{A}} U_{\alpha}$,
and such that for all $\alpha, \alpha' \in \mathcal{A}$,
$\psi_{\alpha} \circ \psi_{\alpha'}^{-1}$
is smooth wherever defined.
For a fixed choice of smooth atlas, we declare a function
$f \colon M \to \R$ to be smooth if $f \circ \psi_{\alpha}^{-1}$
is for each $\alpha \in \mathcal{A}$. The manifold together with a smooth atlas defines
a smooth $n$-dimensional manifold, and we shall always suppress
mention of the atlas. A map $f \colon M \to N$ between
two smooth manifolds is said to be smooth if its composition
with smooth charts is.

Given a smooth $n$-dimensional manifold $M$ and a point $p \in M$,
the tangent space $T_pM$ is the equivalence class of all smooth
curves $\gamma \colon (-\eps, \eps) \to M$ such that $\gamma(0) = p$,
where two such curves $\gamma_0, \gamma_1$ are equivalent
if, with respect to every coordinate chart $\psi$ defined in a neighborhood of $p$,
${(\psi \circ \gamma_0)}'(0) = {(\psi \circ \gamma_1)}'(0)$.
As such, $T_pM$ is a real $n$-dimensional
vector space for each $p \in M$. The cotangent space at $p\in M$
is then the dual to $T_pM$, which we shall denote $T_p^* M$.
The tangent bundle is the
disjoint union $TM := \bigsqcup_{p \in M} T_pM$,
and the cotangent bundle is similarly the disjoint union
$T^*M := \bigsqcup_{p \in M} T_p^*M$. The smooth
structure on $M$ induces a smooth structure on $TM$ and $T^*M$,
so each is then a $2n$-dimensional smooth manifold in its own right.

A smooth vector field $X \colon M \to TM$ is then a smooth map $p\mapsto X_p$
such that $X_p \in T_pM$ for all $p \in M$, and similarly
for a smooth covector field $\alpha \colon M \to T^*M$.
Higher-order tensors are defined similarly: a $(p,q)$-tensor field is a smooth mapping $T \colon M \to {(TM)}^p \otimes {(T^* M)}^q$.
The differential $df \colon M \to T^*M$ of a smooth function $f$ on $M$ is the smooth covector field
such that $df_p \colon T_pM \to \R$
obeys $df_p(v) := (f\circ \gamma)'(0)$, where $\gamma$ is any curve
with tangent vector $v \in T_pM$ at $\gamma(0) = p$.

A Riemannian manifold $(M,g)$ is a smooth $n$-dimensional manifold $M$
with a smooth metric tensor $g \colon M \to T^*M \otimes T^*M$; at each point of $M$, this is a positive definite bilinear form.
The metric tensor therefore defines a smoothly varying choice of inner product
on the tangent spaces of $M$. In addition to giving rise to notions of length and geodesics, the metric tensor provides a canonical isomorphism (the Riesz isomorphism) between the tangent space and cotangent space: for a vector $v \in T_pM$
the covector $\alpha_v \in T_p^*M$ is defined by $\alpha_v(w) = g_p(v,w)$.
For a covector $\alpha \in T_p^*M$ the vector $v_{\alpha} \in T_pM$ is defined
as the unique solution of $\alpha(w) = g_p(v_{\alpha}, w)$ for all $w \in T_pM$.
A smooth vector field $X$ can be accordingly transformed into a smooth
covector field denoted $X^{\flat}$, and a smooth covector field $\omega$ can
be transformed into a smooth vector field $\omega^{\#}$.
The gradient of a function $f \colon M \to \R$ is defined then as $\nabla f:=(df)^{\#}$: in other words, for all $p \in M$ and $v \in T_p M$, $df_p(v) = g_p(\nabla f(p), v)$.

We typically write $\langle \cdot, \cdot \rangle_p$ instead of $g_p(\cdot, \cdot)$, and we write $\norm\cdot_p$ for the norm induced by the metric tensor, i.e., $\norm v_p := \sqrt{\langle v, v\rangle_p}$.
In this notation, the distance between points $p,q \in M$ is defined as
$$
d_M(p,q):= \inf_{\gamma \in \Gamma(p,q)} \int_0^1 \|\gamma'(t)\|_{\gamma(t)} \, \ud t,
$$ where $\Gamma(p,q)$ is the collection of all smooth (or piecewise continuous)
curves $\gamma \colon [0, 1] \to M$ 
such that $\gamma(0) = p$ and $\gamma(1) = q$. If $M$ is connected, then
the distance $d_M$ is indeed a metric.
If we additionally assume that $(M, d_M)$ is complete as a metric space then
by the Hopf-Rinow theorem the value of the above minimization problem
is attained by at least one curve 
$\gamma \colon [0, 1] \to M$ such that $t\mapsto \norm{\gamma'(t)}_{\gamma(t)}$ is constant, which is said to be a constant-speed (minimizing)
geodesic.

For any $p \in M$, there always exists an $\eps > 0$ such that for any vector $v \in T_pM$  with $\norm v_p < \eps$, there is a unique constant-speed geodesic
$\gamma_{v} \colon [0,1] \to M$ obeying $\gamma_v(0) = p$ and $\gamma_v'(0) = v$.\footnote{In fact, a stronger result holds: there exists a neighborhood $U$ of $p$ such that for any two points $q, q' \in U$, there is a unique constant-speed minimizing geodesic $\gamma \colon [0,1] \to U$ joining $q$ to $q'$. Such a neighborhood is called a totally normal neighborhood of $p$.}
On the ball $B_\eps(0)$ with radius $\eps$ and center $0\in T_p M$ (with respect to the norm $\norm \cdot_p$), we can now define the exponential map $\exp_p \colon B_\eps(0) \to M$ by $v \in V_p \mapsto \gamma_v(1)$.
The exponential map is a diffeomorphism onto its image, so we can define the inverse mapping $\log_p \colon \exp_p(B_\eps(0)) \to T_p M$.
If $M$ is complete, the domain of definition of any constant-speed geodesic $\gamma \colon [0,1]\to\R$ can be extended to all of $\R$ such that at each time $\gamma$ is locally a constant-speed minimizing geodesic; in this case, the exponential mapping can be extended to a mapping $\exp_p \colon T_p M\to M$.
Note, however, that the mapping $\log_p$ is not necessarily defined everywhere.
% The set of all $v \in T_pM$ with a geodesic $\gamma_v$ which can be extended
% to be defined on (at least) $[0,1]$ is denoted $V_p$. The exponential
% map is then defined as the map which takes $v \in V_p \mapsto \gamma_v(1)$,
% and so has the form $\exp_p \colon V_p \to M$. The logarithm map
% is the inverse $\log_p \colon \exp_p(V_p) \to V_p$. This is well-defined
% except at points $q$ in the cut-locus of $p$, denoted $C_p$.
% Fortunately, $C_p$ is at most
% $n - 1$ dimensional so $\log_p$ can be defined uniquely almost everywhere. 
% When $M$ is complete each geodesic can be locally extended so that
% $V_p = T_pM$.

We lastly recall that for fixed $q \in M$ and $p$ which does not belong to the cut locus of $q$ (the set of points for which there exists more than one constant-speed minimizing geodesic from $p$),
$$
[\nabla d^2_M(\cdot, q)](p) = - 2\log_p(q).
$$ This statement has an intuitive meaning: it simply says that outside
of the cut locus of $q$, the gradient of the squared distance
points in the direction of maximum increase.\footnote{When there are multiple constant-speed minimizing geodesics joining $p$ to $q$, then the following fact is still true: the squared distance function $d^2_M(\cdot, q)$ is superdifferentiable at $p$. Moreover, for any constant-speed minimizing geodesic $\gamma \colon [0,1] \to M$ joining $p$ to $q$, the vector $-2\gamma'(0) \in T_p M$ is a supergradient of $d^2_M(\cdot, q)$ at $p$.}

\subsection{Riemannian interpretation of Wasserstein space}

In this section, we briefly explain the interpretation set out in~\cite{otto2001geometry} of the Wasserstein space of probability measures as a Riemannian manifold. For more introductory expositions of this subject, we refer to~\cite[Chapter 8]{villani2003topics} and~\cite[Chapter 5]{San15}. The task of putting this formal discussion on rigorous footing is undertaken in~\cite[Chapter 8]{ambrosio2008gradient}. We also note that many treatments view Wasserstein space as a length space using the framework of metric geometry; see~\cite{buragoivanov2001metricgeometry} for an introduction to this approach.

Let $\mu_0 \in \mc P_{2,\rm ac}(\R^D)$ and consider a family ${(v_t)}_{t\in [0, 1]}$ of smooth vector fields on $\R^D$, that is, $v_t : \R^D\to\R^D$ for each $t\in [0, 1]$. Suppose we draw $X_0 \sim \mu_0$ and we evolve $X_0$ according to the ODE $\dot X_t = v_t(X_t)$ for $t\in [0,1]$, that is, we seek an integral curve of ${(v_t)}_{t\in [0,1]}$ with starting point $X_0$. If we let $\mu_t$ denote the law of $X_t$, we may compute the evolution of ${(\mu_t)}_{t\in [0,1]}$ as follows. Take any smooth test function $\psi$ on $\R^D$, and (ignoring any issues of regularity) compute
\begin{align*}
    \partial_t \int \psi \, \ud \mu_t
    &= \partial_t \E\psi(X_t)
    = \E \partial_t \psi(X_t)
    = \E \langle \nabla \psi(X_t), v_t(X_t) \rangle
    = \int \langle \nabla \psi, v_t \rangle \, \ud \mu_t
    = -\int \psi \Div(v_t \mu_t).
\end{align*}
This suggests that the pair ${(\mu_t)}_{t\in [0,1]}$, ${(v_t)}_{t\in [0,1]}$ should solve the following PDE, which is known as the continuity equation:
\begin{align}\label{eq:continuity}
    \partial_t \mu_t
    + \Div(v_t \mu_t) = 0.
\end{align}
This PDE can be interpreted in a suitable weak sense, e.g.: for any smooth test function $\psi$ with compact support, the mapping $t\mapsto \int \psi \, \ud \mu_t$ should be absolutely continuous and thus differentiable at almost every $t\in [0,1]$, and its derivative should satisfy $\partial_t \int \psi \, \ud \mu_t = \int \langle \nabla \psi, v_t \rangle \, \ud \mu_t$.

Since the vector fields ${(v_t)}_{t\in [0,1]}$ govern the evolution of the curve ${(\mu_t)}_{t\in [0,1]} \subseteq \mc P_{2,\rm ac}(\R^D)$, we would like to equip $\mc P_{2,\rm ac}(\R^D)$ with the structure of a Riemannian manifold such that ${(v_t)}_{t\in [0,1]}$ is interpreted as the tangent vectors to the curve ${(\mu_t)}_{t\in [0,1]}$. However, a problem arises: given a curve ${(\mu_t)}_{t\in [0,1]}$ in Wasserstein space, there are many choices for the vector fields ${(v_t)}_{t\in [0,1]}$ which solve~\eqref{eq:continuity} together with ${(\mu_t)}_{t\in [0,1]}$. Indeed, if we fix any pair ${(\mu_t)}_{t\in [0,1]}$, ${(v_t)}_{t\in [0,1]}$ solving~\eqref{eq:continuity}, then  we obtain another solution by replacing $v_t$ with $v_t + w_t$, where $w_t$ is any vector field satisfying $\Div(w_t \mu_t) = 0$. So, what should we take as the tangent vectors to ${(\mu_t)}_{t\in [0,1]}$?

We can take a hint from optimal transport. Specifically, Brenier's theorem asserts that in the optimal transport problem of transporting a measure $\nu_0$ to another measure $\nu_1$, the optimal transport plan is induced by a transport map, which is the gradient of a convex function $\phi$. In other words, if we interpret $\nu_0$ as a collection of particles, then each particle initially moves along the vector field $\nabla \phi - \id$. In particular, taking $\nu_0 = \mu_0$ and $\nu_1 = \mu_\eps$ for a small $\eps > 0$, we expect the tangent vector of ${(\mu_t)}_{t\in [0,1]}$ at time $0$ to be of the form $\nabla \phi - \id$ for a convex function $\phi$.

This motivates the definition of the tangent space to $\mc P_{2,\rm ac}(\R^D)$ at a measure $b$, given in~\cite[Chapter 8]{ambrosio2008gradient} as
$$
T_b \mathcal{P}_{2, \rm ac}(\R^D) := \overline{\left\{ \lambda \, ( \nabla \phi - \id)
\colon \lambda > 0, \; \phi \in C^{\infty}_{c}(\R^D), \; \textrm{$\phi$ convex}
\right\}}^{L^2(b)},
$$ where the closure is with respect to the $L^2(b)$ distance.
We equip $T_b \mc P_{2,\rm ac}(\R^D)$ with the $L^2(b)$ metric, that is, for vector fields $v, w \in T_b \mc P_{2,\rm ac}(\R^D)$ we define $\langle v, w \rangle_b := \langle v, w \rangle_{L^2(b)} := \int \langle v, w \rangle \, \ud b$. The metric induced by this Riemannian structure recovers the Wasserstein distance, in the sense that
\begin{align*}
    W_2(\mu_0,\mu_1)
    &= \inf\Bigl\{ \int_0^1 \norm{v_t}_{\mu_t} \, \ud t \Bigm\vert {(\mu_t)}_{t\in [0,1]}, {(v_t)}_{t\in [0,1]}~\text{solves}~\eqref{eq:continuity}\Bigr\}.
\end{align*}

Given two measures $\mu_0, \mu_1 \in \mc P_{2,\rm ac}(\R^D)$, there is a unique constant-speed minimizing geodesic joining $\mu_0$ to $\mu_1$.
It is given by $\mu_t := {[(1-t) \, {\id} + t T]}_\# \mu_0$, where $T$ is the optimal transport mapping from $\mu_0$ to $\mu_1$; this is known as McCann's interpolation. It satisfies
\begin{align}\label{eq:const_speed_geod}
    W_2(\mu_0, \mu_t) = t W_2(\mu_0, \mu_1) \qquad \forall t \in [0, 1].
\end{align}
Moreover, it can be shown that any constant-speed geodesic in $\mc P_{2,\rm ac}(\R^D)$, that is, any curve ${(\mu_t)}_{t\in [0,1]} \subseteq \mc P_{2,\rm ac}(\R^D)$ satisfying~\eqref{eq:const_speed_geod}, is necessarily of the form $\mu_t = {[(1-t) \, {\id} + tT]}_\# \mu_0$. The tangent vector to ${(\mu_t)}_{t\in [0, 1]}$ at time $0$ is the vector field $T - {\id}$.

Given $\mu \in \mc P_{2,\rm ac}(\R^D)$ and $v \in T_\mu \mc P_{2,\rm ac}(\R^D)$, we may now define the exponential map to be $\exp_\mu v := {({\id} + v)}_\# \mu$. Given any other $\nu \in \mc P_{2,\rm ac}(\R^D)$, we also define the logarithmic map to be $\log_\mu \nu := T_{\mu\to\nu} - {\id}$, where $T_{\mu\to\nu}$ is the optimal transport map from $\mu$ to $\nu$. Observe that $\log_\mu \nu$ is well-defined for \emph{any} pair $\mu,\nu \in \mc P_{2,\rm ac}(\R^D)$.

\section{Experiments}
\label{sec:experiments}

In this section, we demonstrate the linear convergence of GD, the fast rate of estimation for SGD, and some potential advantages of averaging stochastic gradient by way of numerical experiments. In evaluating SGD, we also include a variant that involves sampling with replacement from the empirical distribution.

\subsection{Simulations for the Bures manifold}

First, we begin by illustrating how SGD indeed achieves the fast rate of convergence to the true barycenter on the Bures manifold, as indicated by Theorem~\ref{thm:SGD_short}. 

To generate distributions with a known barycenter, we use the following fact.
If the mean of the distribution ${(\log_{b^\star})}_\# P$ is $0$, then $b^\star$ is a barycenter of $P$.
This fact follows from our PL inequality (Theorem~\ref{thm:pl_bures}) or also from general arguments in~\cite[Theorem 2]{zemel2019procrustes}.
% If the exponential barycenter at a point $b^\star$ (i.e., barycenter of $(\log_{b^\star})_{\#} P$) is $0^\star$, then it is a barycenter of the distribution $P$ \tmnote{I think this should be true, at least in this case but maybe not in general, but what is the justification?}.
We also use the fact that the tangent space of the Bures manifold is given by the set of all symmetric matrices~\cite{bhatia2019bures}.

Figure~\ref{fig:sgd} shows convergence of SGD for distributions on the Bures manifold. 
To generate a sample, we let $A_i$ be a matrix with i.i.d.\ $\gamma_{0,\sigma^2}$ entries. Our random sample on the Bures manifold is then given by
\begin{equation}\label{eq:buresdistribution}
    \Sigma_i = \exp_{\gamma_{0, I_D}} \Bigl( \frac{A_i + A_i^\top}{2} \Bigr),
\end{equation}
which has population barycenter $b^\star = \gamma_{0,I_D}$. An explicit form of this exponential map is derived in~\cite{malago2018wasserstein}. We run two versions of SGD. The first variant uses each sample only once, and passes over the data once. The second variant samples from $\Sigma_1, \dots, \Sigma_n$ with replacement at each iteration and takes the stochastic gradient step towards the selected matrix. For the resulting sequences, we also show the results of averaging the iterates. Specifically, if ${(b_t)}_{t \in \mathbb{N}}$ is the sequence generated by SGD, then the averaged sequence is given by $\tilde{b}_0 = b_0$ and
\[
    \tilde{b}_{t+1} = {\Bigl[ \frac{t}{t+1} \id +  \frac{1}{t+1} T_{\tilde{b}_t \to b_{t+1}} \Bigr]}_{\#} \tilde{b}_t.
\]
On Riemannian manifolds, averaged SGD is known to attain optimal statistical rates under smoothness and geodesic convexity assumptions~\cite{tripuraneni2018averaging}.

Here, we generate 100 datasets of size $n=1000$ in the way specified above and set $\sigma^2 = 0.25$. In this experiment, the SGD step size is chosen to be $\eta_t = 2 / [0.7 \cdot(t + 2/0.7 + 1)]$. The results from these 100 datasets are then averaged for each algorithm, and we also display 95\% confidence bands for the resulting sequences. As is clear from the log-log plot in Figure~\ref{fig:sgdloglog}, SGD achieves the fast $O(n^{-1})$ statistical rate on this dataset.
%We also note that averaging appears to yield much better statistical performance, although we currently do not have theoretical evidence to support this. 

%\begin{figure}[t]
%    \centering
%    \includegraphics[width = .49\textwidth]{convgd.pdf}
%    \caption{Convergence of GD on Bures manifold. Here, $n=1000$, $d=3$, and the distribution is given by \eqref{eq:buresdistribution} with $\Sigma^\star = I_3$ and $\sigma^2 = 0.25$.}
%    \label{fig:gd}
%\end{figure}

The right of Figure~\ref{fig:sgd} shows convergence of GD to the empirical barycenter and true barycenter. We generate samples in the same way as before. This linear convergence was observed previously by~\cite{esteban2016barycenters}.

\begin{figure}[t]
    \centering
    \includegraphics[width = .49\textwidth]{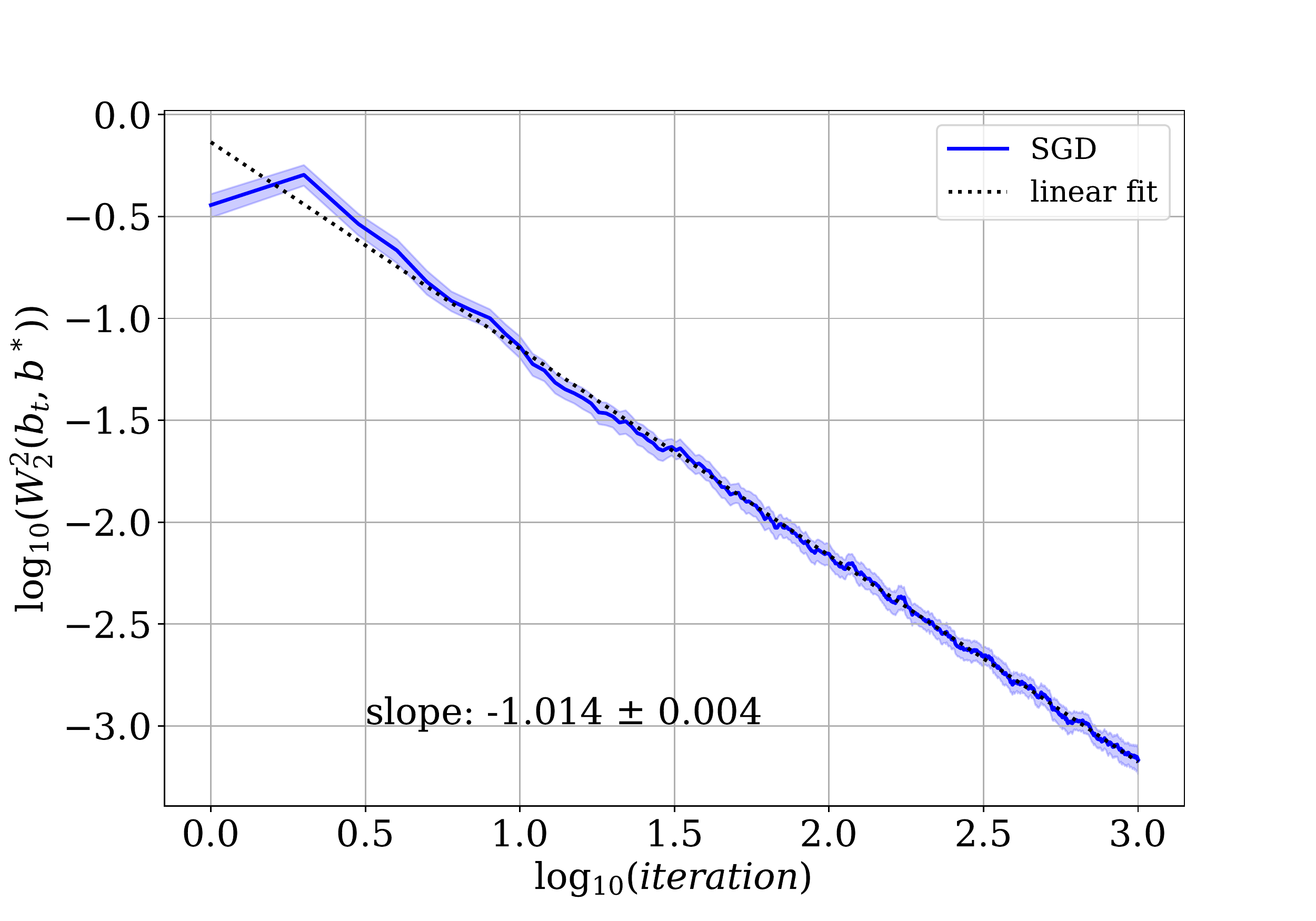}
    \caption{Log-log plot of convergence for SGD on Bures manifold for $n=1000$, $d=3$, and and $b^\star=\gamma_{0,I_3}$. This corresponds to the experiment on the left in Figure~\ref{fig:sgd}}
    \label{fig:sgdloglog}
\end{figure}

\begin{figure}[t]
    \centering
    \includegraphics[width = .49\textwidth]{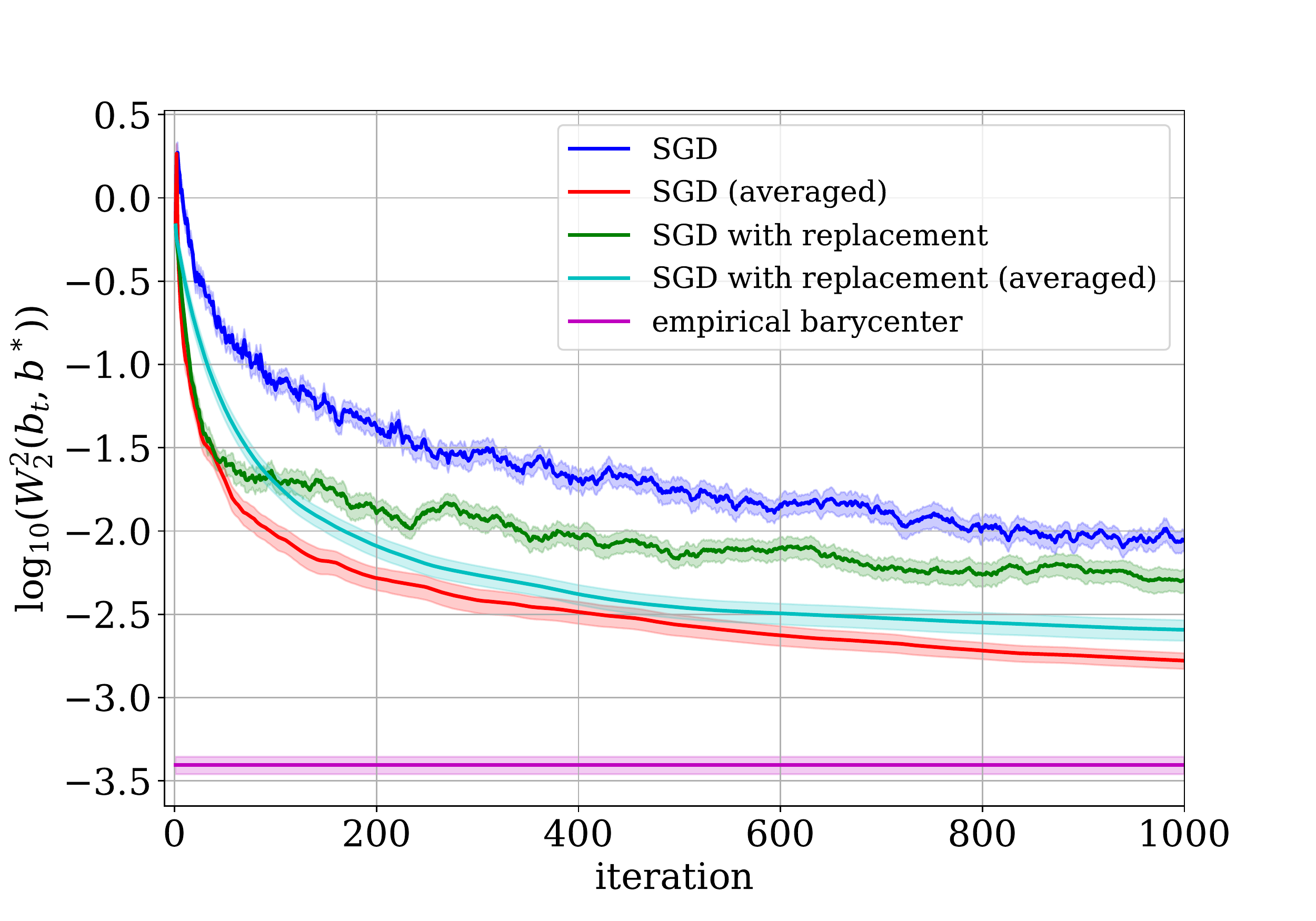}
    \includegraphics[width = .49\textwidth]{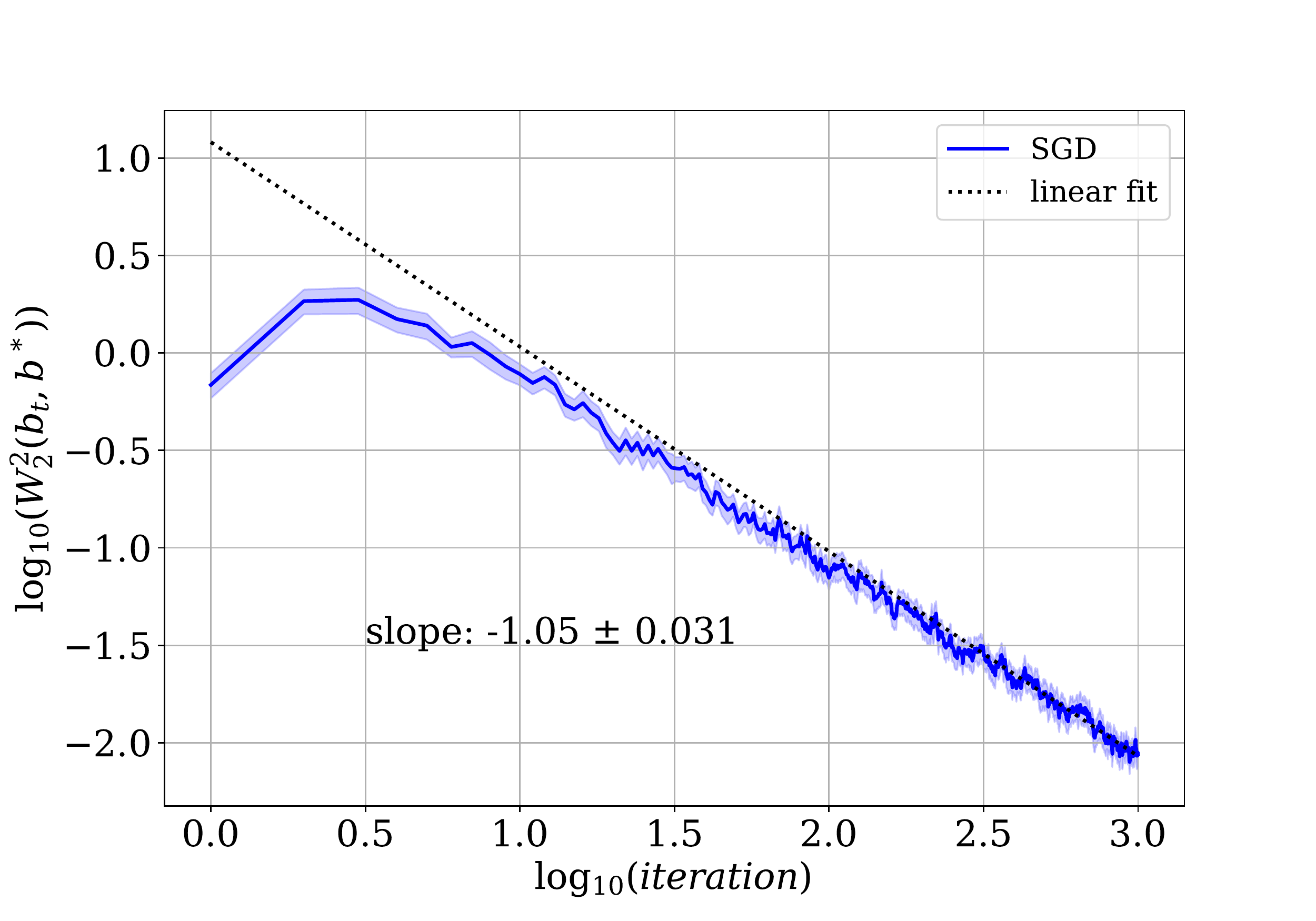}
    \caption{Convergence of SGD on Bures manifold. Here, $n=1000$, $d=3$, and barycenter given by $\on{diag}(20,1,1)$.
    %The r refers to SGD run with replacement sampling.
    The result displays the average over 100 randomly generated datasets.}
    \label{fig:sgd_poorcond}
\end{figure}

In Figure~\ref{fig:sgd_poorcond}, we repeat the same experiment, except this time the barycenter has covariance matrix
\begin{equation*}
    \Sigma^\star = \begin{pmatrix}
        20 & 0 & 0 \\
        0 & 1 & 0 \\
        0 & 0 & 1 \\
    \end{pmatrix}
    ,
\end{equation*}
and the entries of $A_i$ are drawn i.i.d.\ from $\gamma_{0,1}$.
In this situation, the condition numbers of the matrices generated according to this distribution are typically much larger than those centered around $\gamma_{0, I_3}$.
To account for a potentially smaller PL constant, we chose $\eta_t = 2 / [0.1 \cdot (t + 2/0.1 + 1)]$. It is again clear from the right pane in Figure~\ref{fig:sgd_poorcond} that SGD achieves the fast $O(n^{-1})$ statistical rate on this dataset. To account for the slow convergence initially, we only fit this line to the last 500 iterations. We also note that averaging yields drastically better performance in this case, which we are currently unable to theoretically justify.

Figure~\ref{fig:sgdr} shows convergence of SGD with replacement to the empirical barycenter. We generate $n=500$ samples in the same way as in Figure~\ref{fig:sgd}, where the true barycenter is $I_3$ and $\sigma^2 = 0.25$. We calculate the error obtained by the empirical barycenter by running GD on this dataset until convergence, which is displayed with the green line. We also calculate the error obtained by a single pass of SGD, which is given by the blue line. SGD with replacement is then run for 5000 iterations, and we observe that it does indeed achieve better error than single pass SGD if run for long enough. SGD with replacement converges to the empirical barycenter, albeit at a slow rate.

\begin{figure}[t]
    \centering
    \includegraphics[width = .49\textwidth]{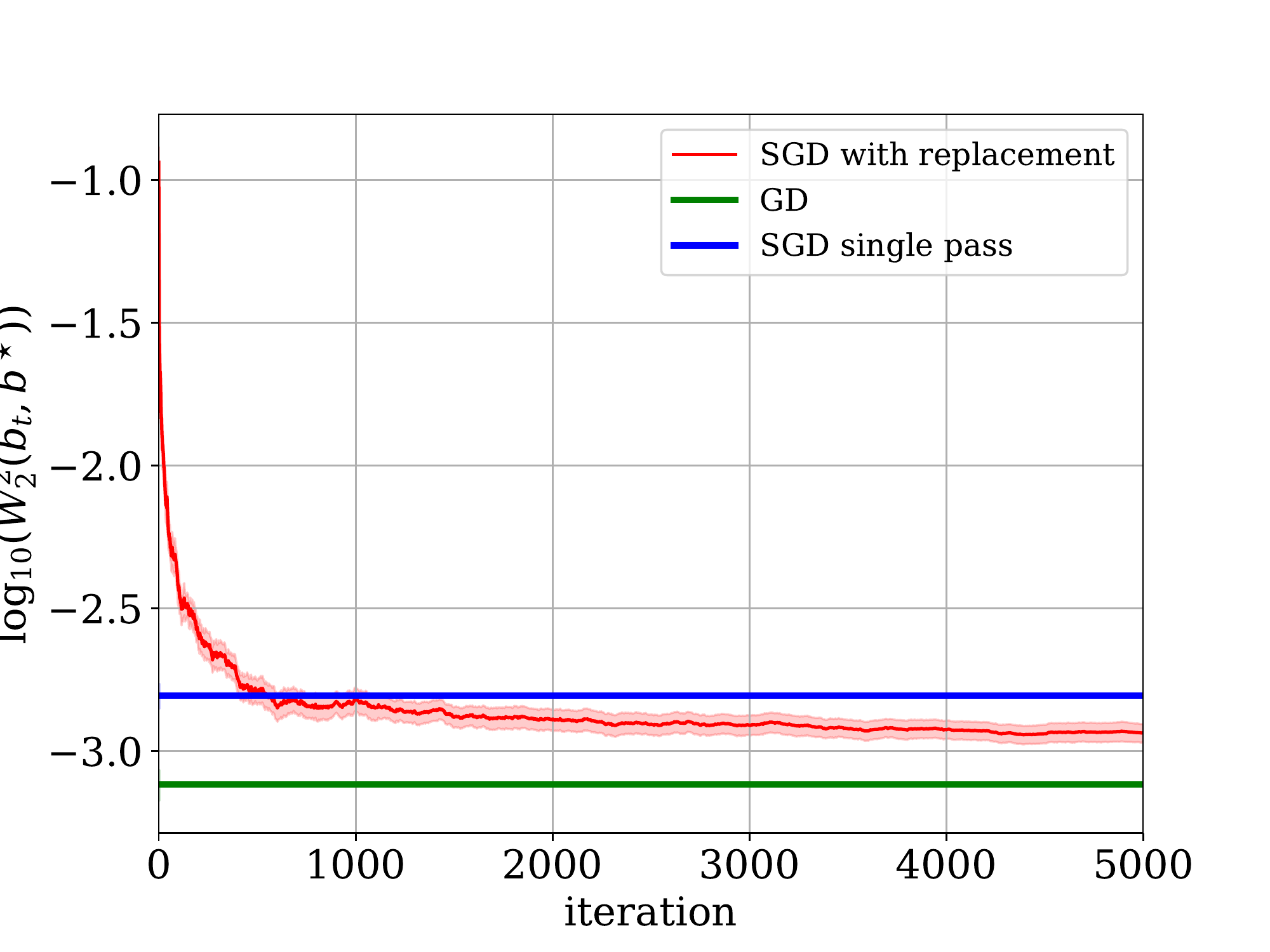}
    \caption{Convergence of SGD on Bures manifold. Here, $n=500$, $d=3$, and the distribution is given by \eqref{eq:buresdistribution} with $\Sigma^\star = I_3$ and $\sigma^2 = 0.25$. The result displays the average over 100 randomly generated datasets.}
    \label{fig:sgdr}
\end{figure}

\subsection{Details of the non-convexity example}\label{appendix:fig_details}

We consider the example of the Wasserstein metric restricted
to centered Gaussian measures, which induces the Bures metric on positive
definite matrices.
Even restricted to such Gaussian measures, the Wasserstein barycenter objective is geodesically non-convex, despite the fact that it is Euclidean convex~\cite{weber2019nonconvex}.
Figure~\ref{fig:buresnoncvx} gives a simulated example of this fact. 
This figure plots the Bures distance squared between a positive definite matrix $C$ and points along some geodesic $\gamma$, which runs between two matrices $A$ and $B$. The matrices used in this example are
\begin{equation*}
    A = \begin{pmatrix}
        0.8 & -0.4 \\
        -0.4 &  0.3
    \end{pmatrix}, \qquad B = \begin{pmatrix}
        0.3 & -0.5 \\
        -0.5 & 1.0
    \end{pmatrix}, \qquad C = \begin{pmatrix} 
        0.5 & 0.5 \\
        0.5 & 0.6
 \end{pmatrix},
\end{equation*}
and $\gamma(t)$, $t \in [0,1]$, is taken to be the Bures or Euclidean geodesic from $A$ to $B$ (the Euclidean geodesic is given by $t\mapsto (1-t)A + t B$). This function is clearly non-convex, and therefore we cannot assume that there is some underlying strong convexity (although the Bures distance is in fact strongly geodesically convex for sufficiently small balls~\cite{huang2015broyden}).

\section{Omitted proofs}\label{appendix:rates}

\subsection{Convergence bounds for GD and SGD under a PL inequality}

This subsection gives proofs of the general convergence theorems for GD and SGD in the present paper. Both of these proofs use the non-negative curvature inequality~\eqref{eq:distineq}. We note that the proof of Theorem~\ref{thm:gd} uses the non-negative curvature implicitly by invoking smoothness, while the use of non-negative curvature is explicit within the proof of Theorem~\ref{thm:sgd}. 
%
%\ndpr{TODO: modify SDG result to have the variance appear. Reformat the titles of subsubsections. Mention curvature in preamble}\tmnote{check preamble}

\subsubsection{Proof of Theorem~\ref{thm:gd} for GD.}
%    \textbf{Assuming weak-quasi-convexity}:
%    Fix an iteration number $t \in \N$.
%    Then, using~\eqref{eq:distineq} it holds that
%    \begin{align*}
%        W_2^2(b_{t+1}, \bar b)
%        &\le \norm{\log_{b_t} b_{t+1} -\log_{b_t} \bar b}_{b_t}^2
%        = \norm{\log_{b_t} b_{t+1}}_{b_t}^2 - 2\langle \log_{b_t} b_{t+1}, \log_{b_t} \bar b \rangle_{b_t} + W_2^2(b_t, b^\star) \\
%        &= \norm{\gradF(b_t)}_{b_t}^2 - 2\langle \gradF(b_t), \log_{b_t} b^\star \rangle_{b_t} + W_2^2(b_t, b^\star) \\
%        &\le 2[F(b_t) - F(b_{t+1})] - 4C_{\msf{WQC}} [F(b_t)-F(b^\star)] + W_2^2(b_t, b^\star).
%    \end{align*}
%    In the last line, we used smoothness~\eqref{eq:smoothness} and the weak-quasi-convexity assumption.
%    Now we sum over $t=0,1,\dotsc,T$:
%    \begin{align*}
%        4C_{\msf{WQC}} \sum_{t=0}^{T} [F(b_t) - F(b^\star)]
%        &\le 2F(b_0) + W_2^2(b_0, b^\star).
%    \end{align*}
%    Since~\eqref{eq:smoothness} implies that the terms in the summation are decreasing, it implies
%    \begin{align*}
%        F(b_T) - F(b^\star)
%        &\le \frac{2F(b_0) + W_2^2(b_0, b^\star)}{4C_{\msf{WQC}} (T+1)}.
%    \end{align*}
    
%    \textbf{Assuming PL inequality}: 
    Using the smoothness~\eqref{eq:smoothness} and the PL inequality~\eqref{eq:pl}, it holds that
    \begin{align*}
        G(b_{t+1}) - G(b_t)
        &\le - C_{\msf{PL}} [G(b_t) - G(\bar b)].
    \end{align*}
    It yields $G(b_{t+1}) - G(\bar b) \le (1-C_{\msf{PL}}) [G(b_t) - G(\bar b)]$, which gives the result.

\iffalse
    \textbf{WQC + VI implies PL}: From Cauchy-Schwarz,
    \begin{align*}
        F(b) - F(b^\star)
        &\le \frac{1}{2C_{\msf{WQC}}}\langle -\gradF(b), \log_b b^\star \rangle_b
        \le \frac{1}{2C_{\msf{WQC}}} \norm{\nabla F(b)}_{L^2(b)} W_2(b,b^\star) \\
        &\le \frac{1}{2C_{\msf{WQC}}} \norm{\nabla F(b)}_{L^2(b)} \sqrt{\frac{2}{C_{\msf{var}}} \{F(b) - F(b^\star)\}}.
    \end{align*}
    Rearranging and squaring yields
    \begin{align*}
        F(b) - F(b^\star)
        &\le \frac{1}{2C_{\msf{WQC}}^2 C_{\msf{var}}} \norm{\nabla F(b)}_{L^2(b)}^2,
    \end{align*}
    which implies~\eqref{eq:pl} with constant $C_{\msf{PL}} = C_{\msf{WQC}}^2 C_{\msf{var}}$.
\fi

\subsubsection{Proof of Theorem~\ref{thm:sgd} for SGD.}
Recall the SGD iterations on $n+1$ observations:
	\begin{align*}
    b_0 := \mu_0, \qquad b_{t+1} := {[(1-\eta_t) \id + \eta_t T_{b_t\to\mu_{t+1}}]}_\# b_t\quad\text{for}~t=0,\dotsc,n,
    \end{align*}
where the step size is given by
\[
\eta_t = C_{\mathsf{PL}} \Bigl(1 - \sqrt{1- \frac{2(t+k)+1}{C_{\mathsf{PL}}^2 {(t+k+1)}^2}}\Bigr) %\le \frac{2(t+k)+1}{C_{\mathsf{PL}} {(t+k+1)}^2} 
\le \frac{2}{C_{\mathsf{PL}} (t+k+1)},
\]
for some $k$ such that $C_{\msf{PL}}^2(k+1)^2 \ge 2k+1$. We note that the step size $\eta_t$ is chosen to solve the equation \[ 1-2C_{\mathsf{PL}} \eta_t + \eta_t^2 = {\Bigl(\frac{t+k}{t+k+1} \Bigr)}^2. \]
    %In this proof, we use the notation $f(b) = W_2^2(b, \mu)$, where $\mu \sim P$ is drawn randomly.
	
Using the non-negative curvature~\eqref{eq:distineq}, we get
	\begin{align*}
	W_2^2(b_{t+1}, \mu)
	&\le\norm{\log_{b_t} b_{t+1} - \log_{b_t}\mu}_{b_t}^2
	= \norm{\eta_t \log_{b_t} \mu_{t+1} - \log_{b_t} \mu}_{b_t}^2 \\ \nonumber
	%&= \norm{\log_{b_t} b_t - \log_{b_t} \mu + \eta_t (\log_{b_t} \mu_{t+1} - \log_{b_t} b_t) }_{b_t}^2 \\ \nonumber
	&= \norm{\log_{b_t} \mu}_{b_t}^2 + \eta_t^2 \norm{\log_{b_t} \mu_{t+1}}_{b_t}^2 -  2 \eta_t \langle \log_{b_t} \mu, \log_{b_t} \mu_{t+1} \rangle_{b_t}.
	\end{align*}
	%Connecting this back to the standard proofs of convergence for SGD, $\log_{b_t} \mu_{t+1}$ plays the role of the stochastic gradient, and $\E_{\mu_{t+1} \sim Q} \norm{\log_{b_t} \mu_{t+1}}_{b_t}^2 = G(b_t)$.
	Taking the expectation with respect to $(\mu, \mu_{t+1}) \sim Q^{\otimes 2}$ (conditioning appropriately on the increasing sequence of $\sigma$-fields), we have
	\begin{align*}
	\E G(b_{t+1}) \le \E[(1+\eta_t^2)G(b_t) - \eta_t \|\nabla G(b_t)\|_{L^2(b_t)}^2].
	%\le \E[G(b_t) - 2\eta_t \|\nabla G(b_t)\|_{L^2(b_t)}^2] + \eta_t^2 \sigma^2.
	\end{align*}
	
	Using the PL inequality~\eqref{eq:pl}, 
    \begin{align*}
	\E G(b_{t+1})
	    &\le \E\big[(1+\eta_t^2) G(b_t) - 2C_{\mathsf{PL}} \eta_t [G(b_t) - G(\bar b)]\big].
	%&\le \E[ G(b_t) - 2\eta_t \|\nabla G(b_t)\|_{L^2(b_t)}^2] + \eta_t^2 \sigma^2 \\
	%&\le  \E[ G(b_t)- 4 C_{\mathsf{PL}} \eta_t \{G(b_t) - G(\bar b)\}] + \eta_t^2 \sigma^2.
    \end{align*}
	Subtracting $G(\bar b)$ and rearranging,
	\begin{align*}
	\E G(b_{t+1}) - G(\bar b)
	%&\le \E G(b_t) - G(\bar b) - \E[2C_{\mathsf{PL}} \eta_t \{G(b_t) - G(\bar b)\}] + \eta_t^2 \sigma^2 \\ \nonumber 
	&\le (1 - 2C_{\mathsf{PL}} \eta_t + \eta_t^2)[\E G(b_t) -  G(\bar b)] + \frac{\eta_t^2}{2} \var(Q),
	\end{align*}
	where we recall that $\var(Q)=2G(\bar b)$.
	With the chosen step size, we find
    \begin{align*}
        \E G(b_{t+1}) - G(\bar b) &\le {\Bigl(\frac{t+k}{t+k+1} \Bigr)}^2 [ \E G(b_t) -  G(\bar b) ] + \frac{2 \var(Q)}{C_{\mathsf{PL}}^2 {(t+k+1)}^2}.
    \end{align*}
    Or equivalently,
    \begin{align*}
        {(t+k+1)}^2[\E G(b_{t+1}) - G(\bar b)]
        %&\le (t+k)^2  \{\E G(b_t) -  G(\bar b)\} + \sigma^2 {\Bigl(\frac{2(t+k) + 1}{4C_{\msf{PL}}(t+k+1)}\Bigr)}^2 \\
        &\le {(t+k)}^2   [ \E G(b_t) -  G(\bar b)] + \frac{2\var(Q)}{C_{\msf{PL}}^2}.   
    \end{align*}
   Unrolling over $t=0,1,\dots,n-1$ yields
    \begin{align*}
        {(n+k)}^2[\E G(b_{n}) - G(\bar b)] \le  k^2 [\E G(b_0) - G(\bar b)] + \frac{2n\var(Q)}{C_{\mathsf{PL}}^2},
    \end{align*}
    or, equivalently,
\begin{equation}
\label{EQ:prSGD1}
  \E G(b_n) - G(\bar b) \le \frac{k^2}{{(n+k)}^2} [\E G(b_0) - G(\bar b)] + \frac{2\var(Q)}{C_{\mathsf{PL}}^2 (n+k)}.
\end{equation}
%
%        \E G(b_n) - G(\bar b)
%        &\le \frac{k^2}{{(n+k)}^2} [\E G(b_0) - G(\bar b)] + \frac{2\var(Q)}{C_{\mathsf{PL}}^2 (n+k)} \\
%%        &\le \var(Q) \Bigl( \frac{k^2}{{(n+k)}^2} + \frac{2}{C_{\mathsf{PL}}^2 (n+k)} \Bigr)
%%        \lesssim \frac{\var(Q)}{n}.
%    \end{align*}
    To conclude the proof, recall that from~\eqref{eq:full_smoothness}, we have
    $$
G(b_0) - G(\bar b) \leqslant \frac{1}{2}
W_2^2(b_0, \bar b).
$$ Taking the expectation over $b_0 \sim Q$
we find
$$
\E G(b_0) - G(\bar b) \leqslant G(\bar b)
= \frac{1}{2} \var(Q),
$$ as claimed.
%     observe that for any $b\in \supp Q$,
% \begin{align*}
% W_2^2(b_0, b)-W_2^2(b, \bar b)& \le \|b_0-b\|_{\bar b}^2-\|b_0-\bar b\|_{\bar b}^2=W_2^2(b,\bar b)+2\langle b_0-\bar b, \bar b -b\rangle_{\bar b}.
% \end{align*}
% Integrating with respect to $Q^{\otimes 2}$ yields
% \begin{align*}
% 2[\E G(b_0)-G(\bar b)]\le \int W_2^2(b,\bar b)\, \ud Q(b) +2\iint \langle b_0-\bar b, \bar b -b\rangle_{\bar b}\, \ud Q(b)\, \ud Q(b_0)=2G(\bar b)\,,
% \end{align*}
% where in the last equality, we used~\cite[Theorem~7(2)]{legouic2019fast}\,. It yields, as desired that
% \begin{align*}
% \E G(b_0) - G(\bar b)\le \frac{\var(Q)}{2}\,.
% \end{align*}
Together with~\eqref{EQ:prSGD1}, it yields
$$
\E G(b_n) - G(\bar b) \le \frac{\var(Q)}{n+k}\Big(\frac{k^2}{2(n+k)}+\frac{2}{C_{\mathsf{PL}}^2} \Big)\le \frac{\var(Q)}{n}\Big(\frac{k+1}{2}+\frac{2}{C_{\mathsf{PL}}^2} \Big).
$$
Plugging-in the value of $k$ completes the proof.
% \as{I think there's a simpler statement
% of this proof that only uses previous
% stuff in the paper:
% Applying~\eqref{eq:full_smoothness}
% at $\bar b$ we see that
% $$
% G(b_0) - G(\bar b) \leqslant \frac{1}{2}
% W_2^2(b_0, \bar b).
% $$ Taking the expectation over $b_0 \sim Q$
% we find
% $$
% \E G(b_0) - G(\bar b) \leqslant G(\bar b)
% = \frac{1}{2} \var(Q),
% $$ as claimed.
% }

\subsection{Variance inequality: Theorem~\ref{thm:variance_ineq}} \label{appendix:proof_of_var}

We begin this section with a review of Kantorovich duality, which we use to discuss the dual of the barycenter problem. Then, we present the proof of Theorem~\ref{thm:variance_ineq}.

Given two measures $\mu,\nu \in \mc P_2(\R^D)$ and maps $f \in L^1(\mu)$, $g \in L^1(\nu)$ such that $f(x) + g(y) \ge \langle x, y \rangle$ for $\mu$-a.e.\ $x\in\R^D$ and $\nu$-a.e.\ $y\in\R^D$, it is easy to see that
\begin{align*}
    \frac{1}{2} W_2^2(\mu,\nu) \ge \int \Bigl(\frac{\norm \cdot^2}{2} - f\Bigr) \, \ud \mu + \int \Bigl(\frac{\norm \cdot^2}{2} - g\Bigr) \, \ud \nu.
\end{align*}
Kantorovich duality (see e.g.~\cite{villani2003topics}) says that equality holds for some pair $f = \varphi$, $g = \varphi^*$ where $\varphi$ is a proper LSC convex function and $\varphi^*$ denotes its convex conjugate, i.e.,
\begin{align*}
    \frac{1}{2} W_2^2(\mu,\nu) = \int \Bigl(\frac{\norm \cdot^2}{2} - \varphi\Bigr) \, \ud \mu + \int \Bigl(\frac{\norm \cdot^2}{2} - \varphi^*\Bigr) \, \ud \nu.
\end{align*}
The map $\varphi$ is called a Kantorovich potential for $(\mu,\nu)$.

Accordingly, given $\bar b \in \mc P_2(\R^D)$, we call a measurable mapping $\varphi : \mc P_{2,\rm ac}(\R^D) \to L^1(\bar b)$, $\mu \mapsto \varphi_\mu$, an \emph{optimal dual solution} for the barycenter problem if the following two conditions are met: (1) for $Q$-a.e.\ $\mu$, the mapping $\varphi_\mu$ is a Kantorovich potential for $(\bar b, \mu)$; (2) it holds that
\begin{align}\label{eq:opt_dual_barycenter}
    \int \Bigl(\frac{\norm\cdot^2}{2} - \varphi_\mu\Bigr) \, \ud Q(\mu) = 0.
\end{align}
It is easily seen that these conditions imply that $\bar b$ is the barycenter of $Q$:
\begin{align*}
    G(b)
    &= \frac{1}{2} \int W_2^2(b, \cdot) \, \ud Q
    \ge \int \Bigl[\int \Bigl(\frac{\norm \cdot^2}{2} - \varphi_\mu\Bigr) \, \ud b + \int \Bigl(\frac{\norm \cdot^2}{2} - \varphi_\mu^*\Bigr) \, \ud \mu \Bigr] \, \ud Q(\mu) \\
    &= \iint \Bigl(\frac{\norm \cdot^2}{2} - \varphi_\mu^*\Bigr) \, \ud \mu \, \ud Q(\mu)
    = \frac{1}{2} \int W_2^2(\bar b, \cdot) \, \ud Q
    = G(\bar b).
\end{align*}
The existence of an optimal dual solution for the barycenter problem is known in the finitely supported case~\cite{agueh2011barycenter}, and existence can be shown for the general case under mild conditions~\cite{legouic2020barycenter}. For completeness, we give a self-contained proof of the existence of an optimal dual solution in the case where $Q$ is supported on Gaussian measures in Appendix~\ref{appendix:gaussreg}.
\begin{proof}[Proof of Theorem~\ref{thm:variance_ineq}]
%    From the characterization of the barycenter~\eqref{eq:characterize_barycenter},
%    \begin{align*}
%        F(b^\star)
%        &= \iint \Bigl(\frac{\norm\cdot^2}{2} - \phi_\mu^*\Bigr) \, \ud \mu \, \ud P(\mu).
%    \end{align*}
    % We start by supposing that $P$ is finitely supported, that is, $P = \sum_{i=1}^K p_i \delta_{\nu_i}$.
    % By the proof of~\cite[Proposition 3.8]{agueh2011barycenter}, it holds that
    % \begin{align*}
    %     \frac{1}{2} \sum_{i=1}^K p_i W_2^2(b^\star, \nu_i)
    %     = \sum_{i=1}^K p_i \int \Bigl( \frac{\norm \cdot^2}{2} - \varphi_{\nu_i\to b^\star}(\cdot)\Bigr) \, \ud \mu_i.
    % \end{align*}
    By the strong convexity assumption, it holds for $Q$-a.e.\ $\mu \in \mc P_{2,\rm ac}(\R^D)$ and a.e.\ $x \in\R^D$,
    \begin{align*}
        \varphi_\mu^*(x) + \varphi_\mu(y)
        &\ge \langle x, y \rangle + \frac{\alpha(\mu)}{2} \norm{y-\nabla \varphi_\mu^*(x)}^2\,,
    \end{align*}
    % To see this, observe $\langle x, \nabla \varphi_{\mu_i\to b^\star}(x) \rangle = \varphi_{\mu_i \to b^\star}(x) + \varphi_{b^\star \to \mu_i}(\nabla \varphi_{\mu_i\to b^\star}(x))$ (this is the equality case of the Fenchel-Young inequality).
    % Combine this with the strong convexity
    % \begin{align*}
    %     &\varphi_{b^\star \to \mu_i}(y) - \varphi_{b^\star \to \mu_i} \circ \nabla \varphi_{\mu_i \to b^\star}(x) \\
    %     &\qquad \ge \langle \nabla \varphi_{b^\star \to \mu_i} \circ \nabla \varphi_{\mu_i \to b^\star}(x), y - \nabla \varphi_{\mu_i \to b^\star}(x) \rangle + \frac{\alpha}{2} \norm{y - \nabla \varphi_{\mu_i\to b^\star}(x)}^2 \\
    %     &\qquad = \langle x, y - \nabla \varphi_{\mu_i \to b^\star}(x) \rangle + \frac{\alpha}{2} \norm{y - \nabla \varphi_{\mu_i\to b^\star}(x)}^2
    % \end{align*}
which can be rearranged into
    \begin{align*}
        \begin{aligned}
            &\norm{x-y}^2 - \alpha(\mu) \norm{y-\nabla \varphi_\mu^*(x)}^2
            \ge \frac{\norm x^2}{2} - \varphi_\mu^*(x) + \frac{\norm y^2}{2} - \varphi_\mu(y).
        \end{aligned}
    \end{align*}
    Integrating this w.r.t.  the optimal transport plan $\gamma_\mu$ between $\mu$ and $b \in \mc P_2(\R^D)$, yields
    \begin{align*}
        \frac{1}{2}\Bigl( W_2^2(\mu, b) - \alpha(\mu) \int \norm{T_{\mu\to b} - T_{\mu\to \bar b}}^2 \, \ud \mu \Bigr)
        &\ge \int \Bigl( \frac{\norm \cdot^2}{2} - \varphi_\mu^* \Bigr) \, \ud \mu + \int \Bigl( \frac{\norm \cdot^2}{2} - \varphi_\mu \Bigr) \, \ud b.
    \end{align*}
    Observe also that~\eqref{eq:distineq} implies $\norm{T_{\mu\to b} - T_{\mu\to \bar b}}^2_{L^2(\mu)} \ge W_2^2(b, \bar b)$.
   Integrating these inequalities with respect to $Q$ yields
    \begin{align*}
        G(b) - \frac{1}{2} \Bigl(\int \alpha \, \ud Q\Bigr) W_2^2(b, \bar b)
        &\ge \int \Bigl[ \int \Bigl( \frac{\norm \cdot^2}{2} - \varphi_\mu^*\Bigr) \, \ud \mu + \int \Bigl( \frac{\norm \cdot^2}{2} - \varphi_\mu \Bigr) \, \ud b \Bigr] \, \ud Q(\mu) \\
        &= \iint \Bigl( \frac{\norm \cdot^2}{2} - \varphi_\mu^* \Bigr) \, \ud \mu \, \ud Q(\mu)
        = G(\bar b).
    \end{align*}
    where in the last two identities, we used~\eqref{eq:opt_dual_barycenter}. It implies the variance inequality.
\end{proof}

\subsection{Integrated PL inequality}
%
%\subsection{Proof of Main Lemma}\label{sscn:main_lemma}
\label{sscn:main_lemma}
The following lemma appears in~\cite[Lemma A.1]{lott2009ricci} in the case of Lipschitz functions. A minor modification of their proof allows to handle locally Lipschitz rather than only Lipschitz functions. We include the modified proof for completeness.

\begin{lem}\label{lem:lott_villani}
Let ${(b_s)}_{s\in [0,1]}$ be a Wasserstein geodesic in $\mc P_2(\R^D)$. Let $\Omega \subseteq \R^D$ be a convex open subset for which $b_0(\Omega) = b_1(\Omega) = 1$.
Then, for any function $f : \R^D \to \R$ which is locally Lipschitz on $\Omega$, it holds that
\begin{align*}
    \Bigl\lvert \int f \, \ud b_0 - \int f \, \ud b_1 \Bigr\rvert
    \le W_2(b_0, b_1) \int_0^1 \norm{\nabla f}_{L^2(b_s)} \, \ud s.
\end{align*}
\end{lem}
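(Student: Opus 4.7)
The plan is to follow the standard displacement interpolation argument, carrying out the only delicate step, namely verifying the fundamental theorem of calculus in the merely locally Lipschitz setting. By Brenier's theorem, there is an optimal transport map $T = T_{b_0 \to b_1}$, and the geodesic is given by $b_s = \bigl((1-s)\,\id + sT\bigr)_\# b_0$. For $b_0$-a.e.\ $x$, both $x$ and $T(x)$ lie in $\Omega$ (since $b_0(\Omega) = b_1(\Omega) = 1$), and by convexity of $\Omega$ the entire segment $\{(1-s)x + sT(x) : s \in [0,1]\}$ is a compact subset of $\Omega$.

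The next step is to use local Lipschitzness of $f$ on $\Omega$ to justify that for $b_0$-a.e.\ $x$ the function $s \mapsto f\bigl((1-s)x + sT(x)\bigr)$ is absolutely continuous on $[0,1]$ (it is Lipschitz on the segment by compactness), hence differentiable a.e.\ with derivative $\langle \nabla f((1-s)x + sT(x)),\, T(x) - x\rangle$ wherever $\nabla f$ exists (using Rademacher). This gives
\begin{align*}
    f(T(x)) - f(x) = \int_0^1 \bigl\langle \nabla f\bigl((1-s)x + sT(x)\bigr),\, T(x) - x \bigr\rangle \, \ud s.
\end{align*}
Integrating against $b_0$, applying Fubini, and then changing variables via the pushforward by $x \mapsto (1-s)x + sT(x)$ (whose image measure is exactly $b_s$) yields
\begin{align*}
    \int f \, \ud b_1 - \int f \, \ud b_0
    = \int_0^1 \int \langle \nabla f(y),\, v_s(y) \rangle \, \ud b_s(y) \, \ud s,
\end{align*}
where $v_s$ is the velocity field characterized by $v_s\bigl((1-s)x + sT(x)\bigr) = T(x) - x$ for $b_0$-a.e.\ $x$ (well-defined $b_s$-a.e.\ for $s \in [0,1)$ since the map $x \mapsto (1-s)x + sT(x)$ is injective; the $s = 1$ case is handled by continuity or by truncating the integral).

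Finally, Cauchy--Schwarz in $L^2(b_s)$ gives $\bigl|\int \langle \nabla f, v_s\rangle \, \ud b_s\bigr| \le \norm{\nabla f}_{L^2(b_s)} \norm{v_s}_{L^2(b_s)}$, and the change of variables identity $\norm{v_s}_{L^2(b_s)}^2 = \int \norm{T(x) - x}^2 \, \ud b_0(x) = W_2^2(b_0, b_1)$ finishes the argument. I expect the main obstacle to be the measurability/integrability bookkeeping required to pass from pointwise FTC on the segments to the integrated form when $\nabla f$ is only defined a.e.\ on $\Omega$; the convexity of $\Omega$ is what ensures the segments stay inside the region where local Lipschitzness (and hence a.e.\ differentiability with controllable modulus) holds, so that Fubini and the change of variables can be applied without trouble.
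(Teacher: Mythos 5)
Your argument follows the same basic template as the paper's proof (fundamental theorem of calculus along the interpolating lines, Fubini--Tonelli, Cauchy--Schwarz, and the identity that the speed of the geodesic has $L^2$-norm $W_2(b_0,b_1)$), but it rests on an assumption that the lemma does not make: you invoke Brenier's theorem to produce a transport map $T_{b_0\to b_1}$ and write $b_s = \bigl((1-s)\,\id + sT\bigr)_\# b_0$. The statement only requires $(b_s)_{s\in[0,1]}$ to be a Wasserstein geodesic in $\mc P_2(\R^D)$, with no absolute continuity of $b_0$; in that generality the optimal coupling need not be induced by a map, and a geodesic need not be of the pushforward form you use. This is exactly the point where the paper takes a different (more general) route: it represents the given geodesic as the law of a random straight-line geodesic in $\R^D$, i.e.\ it takes a probability measure $\Pi$ on geodesics with $b_s = \operatorname{law}(\gamma(s))$ (Villani, Cor.~7.22), and then runs the same FTC/Fubini/Cauchy--Schwarz computation on path space, with $\int \operatorname{length}(\gamma)^2\,\ud\Pi = W_2^2(b_0,b_1)$ playing the role of your $\norm{v_s}_{L^2(b_s)}^2$ identity. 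Your proof is therefore correct for the case actually used later in the paper (there $b_0$ is absolutely continuous, so the map exists and the geodesic is unique), but it does not prove the lemma as stated; replacing the Brenier map by the dynamical optimal plan $\Pi$ closes this gap and makes the two arguments essentially identical.

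One further remark: the chain-rule step you flag (equating the a.e.\ derivative of $s\mapsto f((1-s)x+sT(x))$ with $\langle\nabla f,\,T(x)-x\rangle$ when $\nabla f$ exists only Lebesgue-a.e.) is glossed at the same level of detail in the paper's proof, so it is not a point of divergence; your observation that convexity of $\Omega$ keeps each segment in a compact subset of $\Omega$, so that $f$ is Lipschitz along it, matches the covering argument the paper uses for the same purpose.
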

\begin{proof}
   According to~\cite[Corollary 7.22]{villani2009optimal}, there exists a probability measure $\Pi$ on the space of constant-speed geodesics in $\R^D$  such that $\gamma \sim \Pi$ and $b_s$ is the law of $\gamma(s)$. In particular, it yields
    \begin{align*}
        \int f \, \ud b_0 - \int f \, \ud b_1
        &= \int \bigl[f\bigl(\gamma(0)\bigr) - f\bigl(\gamma(1)\bigr)\bigr] \, \ud \Pi(\gamma).
    \end{align*}
    We can cover the geodesic ${(\gamma(s))}_{s\in [0,1]}$ by finitely many open neighborhoods contained in $\Omega$ so that $f$ is Lipschitz on each such neighborhood; thus, the mapping $t\mapsto f(\gamma(s))$ is Lipschitz and we may apply the fundamental theorem of calculus, the Fubini-Tonelli theorem, and Cauchy-Schwarz:
    \begin{align*}
        \int f \, \ud b_0 - \int f \, \ud b_1
        &= \int \int_0^1 \bigl\langle \nabla f\bigl(\gamma(s)\bigr), \dot\gamma(s) \bigr\rangle \, \ud s \, \ud \Pi(\gamma) \\
        &\le \int_0^1 \int \on{length}(\gamma) \bigl\lVert \nabla f\bigl(\gamma(s)\bigr) \bigr\rVert \, \ud \Pi(\gamma) \, \ud s \\
        &\le \int_0^1 \big(\int {\on{length}(\gamma)}^2 \, \ud \Pi(\gamma)\big)^{1/2} \big(\int \bigl\lVert \nabla f\bigl(\gamma(s)\bigr) \bigr\rVert^2 \, \ud \Pi(\gamma)\big)^{1/2} \, \ud s \\
        &= W_2(b_0,b_1) \int_0^1 \|\nabla f\|_{L^2(b_s)} \, \ud s.
    \end{align*}
    It yields the result.
\end{proof}

\begin{proof}[Proof of Lemma~\ref{lem:main}]
    %We start with the finitely supported case $P = \sum_{i=1}^K p_i \delta_{\nu_i}$.
    By Kantorovich duality~\cite{villani2003topics},
    \begin{align*}
        \frac{1}{2} W_2^2(b, \mu)
        &= \int \Bigl(\frac{\norm\cdot^2}{2} - \phi_{\mu\to b}\Bigr) \, \ud \mu + \int \Bigl( \frac{\norm \cdot^2}{2} - \phi_{b\to\mu}\Bigr) \, \ud b, \\
        \frac{1}{2} W_2^2(\bar b, \mu)
        &\ge \int \Bigl(\frac{\norm\cdot^2}{2} - \phi_{\mu\to b}\Bigr) \, \ud \mu + \int \Bigl( \frac{\norm \cdot^2}{2} - \phi_{b\to\mu}\Bigr) \, \ud \bar b.
    \end{align*}
    This yields the inequality
    \begin{align*}
        G(b) - G(\bar b)
        &\le \int \Bigl( \frac{\norm \cdot^2}{2} - \int \phi_{b\to\mu} \, \ud Q(\mu) \Bigr) \, \ud (b-\bar b).
    \end{align*}
    Let $\bar\phi := \int \phi_{b\to\mu} \, \ud Q(\mu)$; this is a proper LSC convex function $\R^D\to \R \cup \{\infty\}$.
    We apply Lemma~\ref{lem:lott_villani} with $\Omega = \interior \dom \bar\phi$. Since $\bar\phi$ is locally Lipschitz on the interior of its domain (\cite[Theorem 10.4]{rockafellar1997convexanalysis} or~\cite[Theorem 4.1.3]{borwein2006convex}) and $\bar b \ll b$, then $b(\Omega) = \bar b(\Omega) = 1$, whence
    \begin{align*}
       G(b) - G(\bar b) &\le W_2(b,\bar b) \int_0^1 \norm{\nabla \bar \phi - \id}_{L^2(b_s)} \, \ud s
        \le \sqrt{\frac{2[G(b) - G(\bar b)]}{C_{\msf{var}}}} \int_0^1 \norm{\nabla \bar \phi - \id}_{L^2(b_s)} \, \ud s.
    \end{align*}
    Square and rearrange to yield
    \begin{align*}
        G(b) - G(\bar b) & \le \frac{2}{C_{\msf{var}}} \Bigl(\int_0^1 \norm{\nabla \overline{\phi} - \id}_{L^2(b_s)}\Bigr)^2 \, \ud s.
    \end{align*}
    Recognizing that $\gradG(b) = \id - \nabla \overline{\phi}$ yields the
    result.
\end{proof}

\subsection{Rescaling lemma}
\begin{lem}\label{lem:rescaling}
    For any $\alpha > 0$ and $\mu \in \mc P_2(\R^D)$, let $\mu_\alpha$ be the law of $\alpha X$, where $X \sim \mu$.
    Let $\mu \sim Q$ be a random measure drawn from $Q$, and let $Q_\alpha$ be the law of $\mu_\alpha$.
    Then, $\bar b$ is a barycenter of $Q$ if and only if $\bar b_\alpha$ is a barycenter of $Q_\alpha$.
\end{lem}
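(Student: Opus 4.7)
The plan is to show that rescaling by $\alpha$ multiplies the barycenter functional by a constant factor of $\alpha^2$, so the minimizers are preserved under the bijection $b \mapsto b_\alpha$.

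The first step is the key observation that for all $\mu,\nu \in \mc P_2(\R^D)$ and $\alpha > 0$,
\begin{align*}
W_2^2(\mu_\alpha, \nu_\alpha) = \alpha^2\, W_2^2(\mu,\nu).
\end{align*}
This follows directly from the Kantorovich definition~\eqref{EQ:defW2}: the map $(x,y)\mapsto (\alpha x, \alpha y)$ is a bijection between $\Pi_{\mu,\nu}$ and $\Pi_{\mu_\alpha,\nu_\alpha}$ that rescales the cost $\int \|x-y\|^2 \, \ud\pi$ by $\alpha^2$.

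Next, using this identity and the definition of $Q_\alpha$ as the pushforward of $Q$ under $\mu\mapsto \mu_\alpha$, I would compute for any $b \in \mc P_2(\R^D)$,
\begin{align*}
    G_{Q_\alpha}(b_\alpha)
    = \frac{1}{2} \int W_2^2(b_\alpha, \nu_\alpha) \, \ud Q(\nu)
    = \frac{\alpha^2}{2} \int W_2^2(b, \nu) \, \ud Q(\nu)
    = \alpha^2\, G_Q(b).
\end{align*}
Finally, since the map $b \mapsto b_\alpha$ is a bijection of $\mc P_2(\R^D)$ onto itself (with inverse $b \mapsto b_{1/\alpha}$), minimizing $G_{Q_\alpha}$ over $\mc P_2(\R^D)$ is equivalent to minimizing $b\mapsto \alpha^2 G_Q(b)$ over $\mc P_2(\R^D)$, which has the same minimizers as $G_Q$. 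Thus $\bar b$ minimizes $G_Q$ if and only if $\bar b_\alpha$ minimizes $G_{Q_\alpha}$.

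No step is a serious obstacle; the argument is purely definitional. The only mild care needed is to verify the measure-theoretic statement $W_2^2(\mu_\alpha,\nu_\alpha) = \alpha^2 W_2^2(\mu,\nu)$ by explicitly pushing couplings forward under the scaling map, and to note that the bijectivity of $b \mapsto b_\alpha$ allows us to translate minimizers in both directions.
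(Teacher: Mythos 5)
Your proof is correct and follows essentially the same route as the paper: both use the scaling identity $W_2(\mu_\alpha,\nu_\alpha)=\alpha W_2(\mu,\nu)$ to deduce $G_{Q_\alpha}(b_\alpha)=\alpha^2 G_Q(b)$ and then conclude via the bijection $b\mapsto b_\alpha$ between minimizers. The only difference is that you verify the scaling identity directly by pushing couplings forward, whereas the paper cites it from Villani.
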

\begin{proof}%[Proof of Lemma~\ref{lem:rescaling}]
    It is an easy calculation to see that for any $\mu,\nu\in\mc P_2(\R^D)$, \[ W_2(\mu_\alpha,\nu_\alpha) = \alpha W_2(\mu,\nu) \] (see, for instance,~\cite[Proposition 7.16]{villani2003topics}).
    % For any $\mu,\nu \in \mc P_2(\R^D)$, there is a coupling $(X,Y)$ of $X \sim \mu$, $Y \sim \nu$, such that $W_2^2(\mu,\nu) = \E\|X-Y\|_2^2$.
    % Then, $\alpha X \sim \mu_\alpha$, $\alpha Y \sim \nu_\alpha$, so
    % \begin{align*}
    %     W_2^2(\mu_\alpha, \nu_\alpha)
    %     &\le \E\|\alpha X - \alpha Y\|_2^2
    %     = \alpha^2 \E\|X-Y\|_2^2
    %     = \alpha^2 W_2^2(\mu,\nu).
    % \end{align*}
    % The same argument yields $W_2^2(\mu,\nu) \le \alpha^{-2} W_2^2(\mu_\alpha,\nu_\alpha)$, so we have equality:
    % \begin{align*}
    %     W_2^2(\mu_\alpha, \nu_\alpha)
    %     = \alpha^2 W_2^2(\mu,\nu).
    % \end{align*}
    Let \[ G_\alpha(b) := \frac{1}{2} \int W_2^2(\cdot, b) \, \ud Q_\alpha(\mu). \]
    By the previous reasoning, $G_\alpha(b_\alpha) = \alpha^2 G(b)$.
    In particular, the mapping $\bar b \mapsto \bar b_\alpha$ is a one-to-one correspondence between the minimizers of these two functionals.
\end{proof}

\subsection{Properties of the Bures-Wasserstein barycenter}\label{appendix:gaussreg}

Existence and uniqueness of the barycenter in the case where $Q$ is finitely supported follows from the seminal work of Agueh and Carlier~\cite{agueh2011barycenter}. We extend this result to the case where $Q$ is not finitely supported.

\begin{pro}[Gaussian barycenter]\label{prop:gaussian_barycenter}
    Fix $0 < \lambda_{\min} \le \lambda_{\max} < \infty$.
    Let $Q \in \mc P_2(\mc P_{2,\rm ac}(\R^D))$ be such that for all $\mu \in \supp Q$, $\mu = \gamma_{m(\mu), \Sigma(\mu)}$ is a Gaussian with $\lambda_{\min} I_D \preceq \Sigma(\mu) \preceq \lambda_{\max} I_D$.
    Let $\gamma_{\bar m,\bar\Sigma}$ be the Gaussian measure with mean $\bar m := \int m(\mu) \, \ud Q(\mu)$ and covariance matrix $\bar\Sigma$ which is a fixed point of the mapping $S \mapsto G(S) := \int {(S^{1/2} \Sigma(\mu) S^{1/2})}^{1/2} \, \ud Q(\mu)$.
    Then, $\gamma_{\bar m,\bar \Sigma}$ is the unique barycenter of $Q$.
\end{pro}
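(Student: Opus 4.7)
The proof plan has three steps: produce a fixed point of $G$ inside a compact convex subset of $\mbb S_{++}^D$ via Brouwer's theorem; show the corresponding Gaussian is a barycenter by building an optimal dual solution from the fixed-point equation; and then reverse the duality argument to deduce uniqueness.

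For existence of the fixed point, I would work on $K:=\{S\in\mbb S_{++}^D:\lambda_{\min}I_D\preceq S\preceq\lambda_{\max}I_D\}$, which is compact and convex. The map $G$ is continuous, and for $S\in K$ operator monotonicity of the matrix square root yields $\lambda_{\min}I_D\preceq (S^{1/2}\Sigma(\mu)S^{1/2})^{1/2}\preceq \lambda_{\max}I_D$ (since $\lambda_{\min}^2 I_D\preceq S^{1/2}\Sigma(\mu)S^{1/2}\preceq\lambda_{\max}^2 I_D$). Integration preserves these bounds, so $G(K)\subseteq K$ and Brouwer's fixed-point theorem produces $\bar\Sigma\in K$.

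For the second step, I would build an optimal dual solution in the sense of~\eqref{eq:opt_dual_barycenter} and thereby conclude that $\bar b := \gamma_{\bar m,\bar\Sigma}$ is a barycenter. For each Gaussian $\mu=\gamma_{m(\mu),\Sigma(\mu)}$, the Brenier map from $\bar b$ to $\mu$ is the invertible affine map $x\mapsto m(\mu)+T_\mu(x-\bar m)$, where $T_\mu:=\bar\Sigma^{-1/2}(\bar\Sigma^{1/2}\Sigma(\mu)\bar\Sigma^{1/2})^{1/2}\bar\Sigma^{-1/2}\succ 0$. Integrating, a Kantorovich potential is the quadratic $\varphi_\mu(x)=\langle m(\mu),x\rangle+\tfrac{1}{2}\langle x-\bar m,T_\mu(x-\bar m)\rangle+C_\mu$, with the constants $C_\mu$ free to choose. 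Conjugating the fixed-point equation by $\bar\Sigma^{-1/2}$ yields $\int T_\mu\,\ud Q(\mu)=I_D$, and combined with $\int m(\mu)\,\ud Q(\mu)=\bar m$ one obtains $\int\varphi_\mu(x)\,\ud Q(\mu)=\tfrac{1}{2}\|x\|^2+\tfrac{1}{2}\|\bar m\|^2+\int C_\mu\,\ud Q(\mu)$. Setting $C_\mu\equiv-\tfrac{1}{2}\|\bar m\|^2$ then verifies~\eqref{eq:opt_dual_barycenter}, and the Kantorovich duality chain at the end of Appendix~\ref{appendix:proof_of_var} yields $G(b)\ge G(\bar b)$ for every $b\in\mc P_2(\R^D)$.

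For uniqueness, suppose $b^\star$ is another barycenter. Tracing equality through the above chain forces $\tfrac{1}{2}W_2^2(b^\star,\mu)=\int(\tfrac{1}{2}\|\cdot\|^2-\varphi_\mu)\,\ud b^\star+\int(\tfrac{1}{2}\|\cdot\|^2-\varphi_\mu^*)\,\ud\mu$ for $Q$-a.e.\ $\mu$, so $\varphi_\mu$ is also a Kantorovich potential for the transport from $b^\star$ to $\mu$. Since $\varphi_\mu$ is smooth and strongly convex (with positive definite Hessian $T_\mu$), the optimal plan must be concentrated on the graph of the invertible affine map $\nabla\varphi_\mu$, and inverting yields $b^\star=(\nabla\varphi_\mu)^{-1}_\#\mu$; the same identity holds for $\bar b$, so $b^\star=\bar b$. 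The main technical obstacle is the middle step: identifying the Kantorovich potentials of the Gaussian-to-Gaussian transports, tracking their additive constants, and recognizing that the fixed-point equation is precisely the identity needed for the potentials to average to $\tfrac{1}{2}\|\cdot\|^2$ under $Q$. The existence (Brouwer) and uniqueness (reverse duality) steps are then comparatively routine.
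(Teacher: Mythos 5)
Your proposal is correct, and its first two steps coincide with the paper's proof: existence of $\bar\Sigma$ via Brouwer's theorem on a compact convex set of covariance matrices (the paper simply cites the argument of Agueh--Carlier, while you spell out the invariant set $\lambda_{\min}I_D\preceq S\preceq\lambda_{\max}I_D$ using operator monotonicity of the square root), and the barycenter property via exactly the same quadratic potentials $\varphi_\mu$, using the fixed-point identity $\int T_\mu\,\ud Q(\mu)=I_D$ to verify the optimal dual condition~\eqref{eq:opt_dual_barycenter} --- your additive constant $-\tfrac12\|\bar m\|^2$ is in fact needed for~\eqref{eq:opt_dual_barycenter} to hold exactly when $\bar m\neq 0$, a normalization the paper's displayed potential glosses over. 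Where you genuinely diverge is uniqueness: the paper deduces it from the variance inequality (Theorem~\ref{thm:variance_ineq}), which requires first establishing $(\kappa^{-1},\kappa)$-regularity of the Gaussian transport maps (Lemma~\ref{lem:reg_from_eigvals}), whereas you run a duality-rigidity argument --- equality in the integrated weak-duality chain forces each $\varphi_\mu$ to be a Kantorovich potential for $(b^\star,\mu)$ as well, and differentiability plus invertibility of the affine map $\nabla\varphi_\mu$ then pins $b^\star=(\nabla\varphi_\mu)^{-1}_\#\mu=\bar b$. Your route is self-contained (no strong-convexity quantification needed) and applies verbatim to competitors $b^\star\in\mc P_2(\R^D)$ that need not be absolutely continuous, while the paper's route buys the quantitative quadratic growth $G(b)-G(\bar b)\ge\tfrac{\sep}{2}W_2^2(b,\bar b)$, which it needs anyway for the convergence rates; the only small points to make explicit in your write-up are the Fubini justification when integrating the dual inequality against $Q$ (the potentials grow quadratically with coefficients controlled by $\lambda_{\max}/\lambda_{\min}$ and $\int\|m(\mu)\|\,\ud Q(\mu)<\infty$) and the standard fact that equality in Kantorovich duality forces the Fenchel--Young equality $\pi$-a.s., hence concentration of the optimal plan on the graph of $\nabla\varphi_\mu$.
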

\begin{proof}
    To show that there exists a fixed point for the mapping $G$, apply Brouwer's fixed-point theorem as in~\cite[Theorem 6.1]{agueh2011barycenter}. To see that $\gamma_{\bar m,\bar\Sigma}$ is indeed a barycenter, observe the mapping
    \[ 
    \varphi : (\mu, x) \mapsto \varphi_\mu(x) := \langle x, m(\mu) \rangle + \frac{1}{2} \langle x-\bar m, \bar \Sigma^{-1/2} {[\bar\Sigma^{1/2} \Sigma(\mu) \bar\Sigma^{1/2}]}^{1/2} \bar\Sigma^{-1/2} (x-\bar m)\rangle
    \]
    satisfies the characterization~\eqref{eq:opt_dual_barycenter} (so that $\varphi$ is an optimal dual solution for the barycenter problem w.r.t.\ $\gamma_{\bar m, \bar \Sigma}$) using the explicit form of the transport map~\eqref{eq:gaussian_opt_coupling}, so
    $\gamma_{\bar m,\bar\Sigma}$ is a barycenter of $Q$. Uniqueness follows from the variance inequality (Theorem~\ref{thm:variance_ineq}) once we establish regularity of the optimal transport maps in Lemma~\ref{lem:reg_from_eigvals}.
\end{proof}

% To obtain a variance inequality on the Bures-Wasserstein manifold, we use the general result of Theorem~\ref{thm:variance_ineq}. To that end, we establish regularity of the optimal transport maps between two centered Gaussian measures in the following lemma.
\begin{lem}\label{lem:reg_from_eigvals}
    Suppose there exist constants $0 < \lambda_{\min} \le \lambda_{\max} < \infty$ such that all of the eigenvalues of $\Sigma, \Sigma' \in \mbb S_{++}^D$ are bounded between $\lambda_{\min}$ and $\lambda_{\max}$ and define $\kappa=\lambda_{\max}/\lambda_{\min}$.
%    Denote \[ \kappa := \frac{\lambda_{\max}}{\lambda_{\min}}. \]
    Then, the transport map from $\gamma_{0,\Sigma}$ to $\gamma_{0,\Sigma'}$ is $(\kappa^{-1}, \kappa)$-regular.
\end{lem}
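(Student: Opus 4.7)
The plan is to work directly with the explicit form of the Brenier map between two Gaussians given in~\eqref{eq:gaussian_opt_coupling}. Since the means vanish, the optimal transport map from $\gamma_{0,\Sigma}$ to $\gamma_{0,\Sigma'}$ is the linear map $x \mapsto Ax$, where
\[
A := \Sigma^{-1/2}(\Sigma^{1/2}\Sigma'\Sigma^{1/2})^{1/2}\Sigma^{-1/2}.
\]
This $A$ is symmetric and positive definite, so it is the gradient of the convex quadratic $\phi_{\gamma_{0,\Sigma}\to\gamma_{0,\Sigma'}}(x) = \tfrac{1}{2}\langle x, Ax\rangle$. The Hessian of $\phi$ is exactly $A$, so $(\alpha,\beta)$-regularity in the sense of~\eqref{eq:sc_smooth} is equivalent to the operator inequality $\alpha I_D \preceq A \preceq \beta I_D$. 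The goal thus reduces to bounding the eigenvalues of $A$.

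To obtain these bounds, I will avoid diagonalizing the noncommuting pair $(A,\Sigma)$ directly and instead exploit the pushforward identity $A\Sigma A = \Sigma'$, which is immediate from $(Ax)_\#\gamma_{0,\Sigma} = \gamma_{0,\Sigma'}$ together with the symmetry of $A$. Testing this identity against an arbitrary vector $v \in \mathbb{R}^D$ gives
\[
\|\Sigma^{1/2} A v\|^2 = \langle v, A\Sigma A v\rangle = \langle v,\Sigma' v\rangle.
\]
Combining the right-hand side with the sandwich $\lambda_{\min} I_D \preceq \Sigma' \preceq \lambda_{\max} I_D$ and the left-hand side with $\lambda_{\min} I_D \preceq \Sigma \preceq \lambda_{\max} I_D$ yields
\[
\lambda_{\min}\|Av\|^2 \leq \|\Sigma^{1/2} A v\|^2 \leq \lambda_{\max}\|v\|^2, \qquad \lambda_{\max}\|Av\|^2 \geq \|\Sigma^{1/2} A v\|^2 \geq \lambda_{\min}\|v\|^2.
\]
Dividing gives $\kappa^{-1}\|v\|^2 \leq \|Av\|^2 \leq \kappa\|v\|^2$, and since $A$ is symmetric positive definite, its singular and eigen-values coincide, so $\kappa^{-1/2}I_D \preceq A \preceq \kappa^{1/2} I_D$. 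In particular $\kappa^{-1} I_D \preceq A \preceq \kappa I_D$, which is the desired $(\kappa^{-1},\kappa)$-regularity.

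There is no real obstacle here beyond being careful about the noncommutativity of $A$ and $\Sigma$: the key observation that makes everything work is that one should never try to extract eigenvalues of $A$ from a direct spectral decomposition of the symmetric product $A\Sigma A$, but rather plug in test vectors and read off the bounds from the sandwich on $\Sigma$ and $\Sigma'$. The (mildly stronger) bound $\kappa^{\pm 1/2}$ that falls out of this argument is more than enough for the statement.
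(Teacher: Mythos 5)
Your proof is correct, but it proceeds differently from the paper's. The paper bounds the eigenvalues of $A := \Sigma^{-1/2}{(\Sigma^{1/2}\Sigma'\Sigma^{1/2})}^{1/2}\Sigma^{-1/2}$ in two separate steps: the smoothness bound comes from submultiplicativity of the operator norm, $\|A\|_{\rm op} \le \sqrt{\|\Sigma^{-1}\|_{\rm op}\,\|\Sigma^{1/2}\Sigma'\Sigma^{1/2}\|_{\rm op}\,\|\Sigma^{-1}\|_{\rm op}} \le \lambda_{\max}/\lambda_{\min} = \kappa$, and the strong convexity bound then follows by symmetry, since the inverse of $A$ is the transport map from $\gamma_{0,\Sigma'}$ to $\gamma_{0,\Sigma}$, whose smoothness is likewise at most $\kappa$. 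You instead exploit the conjugation identity $A\Sigma A = \Sigma'$ and test vectors, which handles both bounds symmetrically in a single stroke and yields the strictly sharper conclusion $\kappa^{-1/2} I_D \preceq A \preceq \kappa^{1/2} I_D$ (tight already in dimension one, e.g.\ $\Sigma = \lambda_{\min}$, $\Sigma' = \lambda_{\max}$), which of course implies the stated $(\kappa^{-1},\kappa)$-regularity since $\kappa \ge 1$. Your identification of the Kantorovich potential as $x \mapsto \tfrac12\langle x, Ax\rangle$ with Hessian $A$, and the reduction of \eqref{eq:sc_smooth} to the operator sandwich on $A$, are both sound, as is the symmetry of $A$ needed for $A\Sigma A^\top = A\Sigma A$. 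Beyond elegance, your sharper constant has downstream value: fed into Theorem~\ref{thm:variance_ineq}, it would give strong convexity $\alpha(\mu) \ge \kappa^{-1/2}$ rather than $\kappa^{-1}$, improving the variance constant in Theorem~\ref{thm:varineq_bures} from $\sep$ to $\sqrt{\sep}$ and correspondingly tightening the constants in the main convergence theorems; the paper's route, by contrast, only requires norm submultiplicativity and the inversion symmetry of optimal maps, at the cost of the weaker exponent.
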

\begin{proof}
    The transport map from $\gamma_{0,\Sigma}$ to $\gamma_{0,\Sigma'}$ is the map $x \mapsto \Sigma^{-1/2} {(\Sigma^{1/2} \Sigma' \Sigma^{1/2})}^{1/2} \Sigma^{-1/2}x$. 
    %(\cite[Remark 2.30]{peyre2019computationalot}).
 Throughout this proof, we write $\|\cdot\|=\|\cdot\|_{\mathrm{op}}$ for simplicity.   We have the trivial bound
    \begin{align*}
        \norm{\Sigma^{-1/2} {(\Sigma^{1/2} \Sigma' \Sigma^{1/2})}^{1/2} \Sigma^{-1/2}}
        &\le \sqrt{\norm{\Sigma^{-1}} \norm{\Sigma^{1/2} \Sigma' \Sigma^{1/2}} \norm{\Sigma^{-1}}}.
    \end{align*}
Moreover $\norm{\Sigma^{-1}} \le \lambda_{\min}^{-1}$ and $\norm{\Sigma^{1/2} \Sigma' \Sigma^{1/2}} \le \lambda_{\max}^2$, so that the smoothness  is bounded by
    \begin{align*}
        \norm{\Sigma^{-1/2} {(\Sigma^{1/2} \Sigma' \Sigma^{1/2})}^{1/2} \Sigma^{-1/2}}
        &\le \frac{\lambda_{\max}}{\lambda_{\min}}.
    \end{align*}
    We can take advantage of the fact that $\Sigma$, $\Sigma'$ are interchangeable and infer that the strong convexity parameter of the transport map from $\Sigma$ to $\Sigma'$ is the inverse of the smoothness parameter of the transport map from $\Sigma'$ to $\Sigma$. In other words,
    \begin{align*}
      \min_{1\le j \le D}  \lambda_j\bigl(\Sigma^{-1/2} {(\Sigma^{1/2} \Sigma' \Sigma^{1/2})}^{1/2} \Sigma^{-1/2} \bigr)
        &\ge \frac{\lambda_{\min}}{\lambda_{\max}}.
    \end{align*}
    This concludes the proof.
\end{proof}
Theorem~\ref{thm:variance_ineq} readily yields the following variance inequality.
\begin{thm}\label{thm:varineq_bures}
Fix $\sep>0$ and assume that $Q$ is $\sep$-regular. Then $Q$ has a unique barycenter $\bar b$ and it satisfes a variance inequality with constant $C_{\mathsf{var}}=\sep$, that is, for any $b \in \mc P_{2,\rm ac}(\R^D)$, 
$$
G(b)-G(\bar b)\ge \frac{\sep}{2}W_2^2(b,\bar b)\,.
$$
\end{thm}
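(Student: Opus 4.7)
The plan is to obtain Theorem~\ref{thm:varineq_bures} as a direct corollary of the general variance inequality Theorem~\ref{thm:variance_ineq}, combined with Proposition~\ref{prop:gaussian_barycenter} and Lemma~\ref{lem:reg_from_eigvals}. The core task is to translate $\zeta$-regularity into a uniform lower bound on the strong convexity constant $\alpha(\mu)$ of the Kantorovich potentials from $\bar b$ to each $\mu \in \supp Q$; once this is in hand, Theorem~\ref{thm:variance_ineq} delivers the announced constant $C_{\mathsf{var}} = \zeta$ and uniqueness follows automatically from the strict quadratic lower bound.

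The first step is to note that $\zeta$-regularity actually controls the entire spectrum of each $\Sigma(\mu)$, not merely its operator norm and determinant. Ordering eigenvalues increasingly, $\det \Sigma(\mu) \le \lambda_{\min}(\Sigma(\mu)) \, \|\Sigma(\mu)\|_{\on{op}}^{D-1}$, so $\|\Sigma(\mu)\|_{\on{op}} \le 1$ and $\det \Sigma(\mu) \ge \zeta$ together force the spectrum of each $\Sigma(\mu)$ to lie in $[\zeta, 1]$. Proposition~\ref{prop:gaussian_barycenter} then applies with $\lambda_{\min} = \zeta$ and $\lambda_{\max} = 1$, producing a centered Gaussian barycenter $\bar b = \gamma_{0, \bar \Sigma}$ whose covariance satisfies the fixed-point identity $\bar\Sigma = \int (\bar\Sigma^{1/2}\Sigma(\mu)\bar\Sigma^{1/2})^{1/2}\,\ud Q(\mu)$.

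The second step is to propagate the spectral bounds to $\bar\Sigma$. For the operator norm, the inequality $\|(A^{1/2}BA^{1/2})^{1/2}\|_{\on{op}} \le \|A\|_{\on{op}}^{1/2}\|B\|_{\on{op}}^{1/2}$ applied pointwise inside the fixed-point identity yields $\|\bar\Sigma\|_{\on{op}} \le \|\bar\Sigma\|_{\on{op}}^{1/2}$, hence $\|\bar\Sigma\|_{\on{op}} \le 1$. For the determinant, Minkowski's determinant inequality for PSD matrices gives $(\det \bar \Sigma)^{1/D} \ge \int (\det \bar\Sigma)^{1/(2D)} (\det \Sigma(\mu))^{1/(2D)}\,\ud Q(\mu) \ge (\det \bar\Sigma)^{1/(2D)} \, \zeta^{1/(2D)}$, which rearranges to $\det \bar \Sigma \ge \zeta$. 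Combining this with the operator-norm bound via the same elementary spectral inequality used above, $\bar\Sigma$ also has all its eigenvalues in $[\zeta, 1]$.

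The final step is Lemma~\ref{lem:reg_from_eigvals} applied with $\lambda_{\min} = \zeta$ and $\lambda_{\max} = 1$: the transport map $T_{\bar b \to \mu}$ is $(\zeta, 1/\zeta)$-regular for every $\mu \in \supp Q$, which is exactly to say that the quadratic Kantorovich potential $\varphi_\mu$ exhibited in the proof of Proposition~\ref{prop:gaussian_barycenter} is $\zeta$-strongly convex. Since $\varphi$ is already verified there to be an optimal dual solution, Theorem~\ref{thm:variance_ineq} applies with $\alpha(\mu) \equiv \zeta$ and produces the variance inequality with constant $C_{\mathsf{var}} = \int \alpha \, \ud Q = \zeta$, which also implies uniqueness of $\bar b$. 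The only non-routine point is the propagation of the spectral bounds from $Q$ to $\bar \Sigma$ in the second step, where Minkowski's determinant inequality is the key tool; everything else is a packaging of previously established results.
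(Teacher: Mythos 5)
Your proof is correct and follows essentially the paper's intended route: Theorem~\ref{thm:variance_ineq} combined with Proposition~\ref{prop:gaussian_barycenter} and Lemma~\ref{lem:reg_from_eigvals}, after observing that $\|\Sigma\|_{\rm op}\le 1$ together with $\det\Sigma\ge\sep$ forces the whole spectrum of each $\Sigma(\mu)$ into $[\sep,1]$, so that each dual potential is $\sep$-strongly convex and $C_{\msf{var}}=\int\alpha\,\ud Q=\sep$. Your explicit propagation of the spectral bounds to $\bar\Sigma$ via the fixed-point identity, operator-norm submultiplicativity, and Minkowski's determinant inequality correctly supplies a detail the paper leaves implicit (there it comes from the Brouwer fixed-point construction over matrices with eigenvalues in $[\lambda_{\min},\lambda_{\max}]$).
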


\subsection{Generalized geodesic convexity of $\ln \|\cdot\|_{L^\infty}$}

\begin{lem}\label{lem:geod_cvx_log_density}
    Identify measures $\rho \in \mc P_{2,\rm ac}(\R^D)$ with their densities, and  let the $\norm \cdot_{L^\infty}$ norm denote the $L^\infty$-norm (essential supremum) w.r.t.\ the Lebesgue measure on $\R^D$.
    Then, for any $b,\mu_0,\mu_1 \in \mc P_{2,\rm ac}(\R^D)$, any $s \in [0,1]$, and almost every $x\in\R^D$, it holds that
    \begin{align*}
        \ln \mu_s^b\bigl(\nabla \phi_{b\to\mu_s^b}(x)\bigr) \le (1-s)\ln \mu_0\bigl(\nabla \phi_{b\to\mu_0}(x)\bigr)+ s\ln \mu_1\bigl(\nabla \phi_{b\to\mu_1}(x)\bigr).
    \end{align*}
    In particular, taking the essential supremum over $x$ on both sides, we deduce that the functional $\mc P_{2,\rm ac}(\R^D) \to (-\infty, \infty]$ given by $\rho \mapsto \ln{\norm{\rho}_{L^\infty}}$ is convex along generalized geodesics.
\end{lem}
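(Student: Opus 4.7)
The plan is to reduce the pointwise inequality to the concavity of $\ln \det$ on the cone of positive semidefinite matrices via the Monge--Amp\`ere change of variables. Write $T_i := \nabla \phi_{b \to \mu_i}$ for $i \in \{0,1\}$ and $T_s := \nabla \phi_{b \to \mu_s^b}$. By definition of the generalized geodesic, $\mu_s^b = [(1-s) T_0 + s T_1]_\# b$, and the map $(1-s) T_0 + s T_1$ is the gradient of the convex function $(1-s)\phi_{b \to \mu_0} + s \phi_{b \to \mu_1}$; so Brenier's theorem identifies it as the optimal transport map from $b$ to $\mu_s^b$. Thus $T_s = (1-s) T_0 + s T_1$, and passing to Alexandrov Hessians gives $D^2 \phi_{b \to \mu_s^b} = (1-s)\, D^2 \phi_{b \to \mu_0} + s\, D^2 \phi_{b \to \mu_1}$ at $b$-a.e.\ $x$.

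Next I would invoke the Monge--Amp\`ere equation for Brenier maps between absolutely continuous measures: for each $i \in \{0,1,s\}$ and $b$-a.e.\ $x$,
\[
b(x) = \mu_i\bigl(T_i(x)\bigr)\, \det D^2 \phi_{b \to \mu_i}(x),
\]
so that $\ln \mu_i(T_i(x)) = \ln b(x) - \ln \det D^2 \phi_{b \to \mu_i}(x)$. Applying the classical concavity of $\ln \det$ on $\mbb S_{++}^D$ to the convex combination of Hessians above,
\[
\ln \det D^2 \phi_{b \to \mu_s^b}(x) \ge (1-s)\ln \det D^2 \phi_{b \to \mu_0}(x) + s\ln \det D^2 \phi_{b \to \mu_1}(x).
\]
Negating and adding the trivial identity $\ln b(x) = (1-s) \ln b(x) + s \ln b(x)$ yields the pointwise inequality in the statement.

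For the final claim, the key observation is that for any measurable map $T$ pushing $b$ forward to an absolutely continuous density $\rho$, one has $\mathrm{ess\,sup}_b (\rho \circ T) = \mathrm{ess\,sup}_\rho \rho = \|\rho\|_{L^\infty}$; the last equality holds because for $M > 0$ the set $\{\rho > M\}$ has positive Lebesgue measure if and only if it has positive $\rho$-measure. Taking the essential supremum with respect to $b$ of the pointwise inequality, applying this identity to each of $T_0, T_1, T_s$, and using subadditivity of $\mathrm{ess\,sup}$ on the right-hand side delivers $\ln \|\mu_s^b\|_{L^\infty} \le (1-s) \ln \|\mu_0\|_{L^\infty} + s \ln \|\mu_1\|_{L^\infty}$, which is the claimed convexity along generalized geodesics.

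The main subtlety in this plan is the use of the Monge--Amp\`ere equation: the Brenier potentials $\phi_{b \to \mu_i}$ are merely convex, hence only twice differentiable in the Alexandrov sense, so the Jacobian identity must be interpreted in the a.e.\ Alexandrov sense. This is a standard result in optimal transport (due to McCann), which I would invoke rather than reprove.
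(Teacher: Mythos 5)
Your proposal is correct and follows essentially the same route as the paper's proof: identify the transport map to $\mu_s^b$ as the convex combination $(1-s)\nabla\phi_{b\to\mu_0}+s\nabla\phi_{b\to\mu_1}$, write the Monge--Amp\`ere equations in the Alexandrov sense for the three pairs, and conclude via log-concavity of the determinant on $\mbb S_{++}^D$. Your explicit justification of the final essential-supremum step (that $\operatorname{ess\,sup}_b(\rho\circ T)=\norm{\rho}_{L^\infty}$ when $T_\#b=\rho$) is a detail the paper leaves implicit, but it does not change the argument.
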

\begin{proof}
    Let $\rho := {[(1-s) T_{b\to \mu} + sT_{b\to\nu}]}_\# b$ be a point on the generalized geodesic with base $b$ connecting $\mu$ to $\nu$.
    Let $\phi_{b\to\mu}$, $\phi_{b\to\nu}$ be the convex potentials whose gradients are $T_{b\to\mu}$ and $T_{b\to\nu}$ respectively.
    Then, for almost all $x\in\R^D$, the Monge-Amp\`ere equation applied to the pairs $(b,\mu)$, $(b,\nu)$, and $(b,\rho)$ respectively, yields
    \renewcommand{\arraystretch}{1.5}
$$
        b(x)=\left\{\begin{array}{l}
        \mu\bigl(\nabla \phi_{b\to\mu}(x)\bigr)\det D^2_{\rm A} \phi_{b\to\mu}(x)\\
         \nu\bigl(\nabla \phi_{b\to\nu}(x)\bigr)\det D^2_{\rm A} \phi_{b\to\nu}(x) \label{eq:density_bd_eq1} \\
         \rho\bigl((1-s)\nabla \phi_{b\to\mu}(x) + s \nabla \phi_{b\to\nu}(x)\bigr)\det\bigl((1-s)D^2_{\rm A} \phi_{b\to\mu}(x) + s D^2_{\rm A} \phi_{b\to\nu}(x)\bigr).
    \end{array}\right.
    $$
    Here, $D_{\rm A}^2 \phi$ denotes the Hessian of $\phi$ in the Alexandrov sense; see~\cite[Theorem 4.8]{villani2003topics}.
%    By Brenier's theorem we know that the support of $b$ is mapped inside the support of $\rho$,
%    and similarly for $b$ to $\mu$ and $b$ to $\nu$. Hence we may assume that
%    $x \in \supp(b)$ and in fact that each of the above terms is positive.
    
 Fix $x$ such that $b(x)>0$. 
    On the one hand, applying log-concavity of the determinant, it follows from the third Monge-Amp\`ere equation that
    \begin{align*}
        \ln b(x)
        &= \ln \rho\bigl((1-s)\nabla \phi_{b\to\mu}(x) + s \nabla \phi_{b\to\nu}(x)\bigr) + \ln \det\bigl((1-s)D^2_{\rm A} \phi_{b\to\mu}(x) + sD^2_{\rm A} \phi_{b\to\nu}(x)\bigr) \\
        &\ge \ln \rho\bigl((1-s)\nabla \phi_{b\to\mu}(x) + s \nabla \phi_{b\to\nu}(x)\bigr) + (1-s) \ln \det D^2_{\rm A} \phi_{b\to\mu}(x) + s \ln \det D^2_{\rm A} \phi_{b\to\nu}(x).%\label{eq:density_bd_eq2}
        % &= \ln \rho\bigl((1-t)\nabla \phi_{b\to\mu}(x)+ t \nabla \phi_{b\to\nu}(x)\bigr) \\&\qquad{} + \ln b(x) - (1-t) \ln \mu\bigl(\nabla \phi_{b \to \mu}(x)\bigr) - t \ln \nu\bigl(\nabla \phi_{b \to \nu}(x)\bigr).
    \end{align*}
  On the other hand, it follows from the first two Monge-Amp\`ere equations that 
\begin{align*}
    \ln b(x) &=(1-s)\ln \mu\bigl(\nabla \phi_{b\to\mu}(x)\bigr)+ s\ln \nu\bigl(\nabla \phi_{b\to\nu}(x)\bigr) \\
&\qquad\qquad {} +(1-s) \ln \det D^2_{\rm A} \phi_{b\to\mu}(x) + s \ln \det D^2_{\rm A} \phi_{b\to\nu}(x).
\end{align*}
The above two displays yield
$$
\ln \rho\bigl((1-s)\nabla \phi_{b\to\mu}(x) + s \nabla \phi_{b\to\nu}(x)\bigr) \le (1-s)\ln \mu\bigl(\nabla \phi_{b\to\mu}(x)\bigr)+ s\ln \nu\bigl(\nabla \phi_{b\to\nu}(x)\bigr)
$$
It yields the result.
\end{proof}

\subsection{A PL inequality on the Bures-Wasserstein manifold}
\begin{thm}\label{thm:pl_bures}

Fix $\sep \in (0,1]$, and let $Q$ be a $\sep$-regular distribution.
%    \begin{align}\label{eq:S_eta}
%        \cS_\sep := \{\gamma_{0,\Sigma} : \Sigma \in \mbb S_{++}^D, \; \|\Sigma\|_{\rm op} \le 1, \; \det \Sigma \ge \sep\}.
%    \end{align}
%Let $Q$ be a distribution with support contained in $S_\sep$.
    Then, the barycenter functional $G$ satisfies the PL inequality with constant $C_{\msf{PL}} = \sep^2/4$ uniformly at all $b \in \cS_\sep$:
$$
        G(b) - G(\bar b)\le \frac{2}{\sep^2} \|\nabla G(b)\|_b^2.
$$
% \as{Why state it as an integral? This is confusing wrt the rest of our notation.
% In fact, to be consistent with our notation we wouldn't write this out
% and instead would just leave it as ``a PL inequality holds with
% $C_{\rm PL} = (\lambda_{\max}/\lambda_{\min})^2$''}
\end{thm}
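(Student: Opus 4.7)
The plan is to combine the integrated PL inequality of Lemma~\ref{lem:main} with a uniform spectral comparison along the Wasserstein geodesic from $b$ to $\bar b$. Since $Q$ is $\sep$-regular, Theorem~\ref{thm:varineq_bures} furnishes a variance inequality with constant $C_{\msf{var}} = \sep$, and the hypotheses of Lemma~\ref{lem:main} hold: $b$ and $\bar b$ are both non-degenerate Gaussians in $\cS_\sep$ (hence mutually absolutely continuous), and for $\mu \in \supp Q$ the Kantorovich potential $\phi_{b\to\mu}(x) = \tfrac{1}{2} x^\top M(b,\mu) x$ is the explicit quadratic form whose gradient is the linear map from~\eqref{eq:gaussian_opt_coupling}; this is jointly measurable in $(\mu, x)$. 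Lemma~\ref{lem:main} therefore yields
\[
G(b) - G(\bar b) \le \frac{2}{\sep} \Bigl(\int_0^1 \|\nabla G(b)\|_{L^2(b_s)} \, \ud s\Bigr)^2.
\]

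The Wasserstein geodesic $(b_s)_{s\in[0,1]}$ from $b$ to $\bar b$ coincides with the generalized geodesic based at $b$ (since $T_{b\to b} = \id$), so the generalized geodesic convexity of $\cS_\sep$ deduced from Lemmas~\ref{thm:geod_cvx_max_eigenvalue} and~\ref{thm:geod_cvx_log_det} forces each $b_s = \gamma_{0,\Sigma_s}$ to lie in $\cS_\sep$. Because optimal transport maps between mean-zero Gaussians are linear, $\nabla G(b)(x) = Lx$ with $L := I_D - \int M(b,\mu) \, \ud Q(\mu)$, and therefore
\[
\|\nabla G(b)\|_{L^2(b_s)}^2 = \int x^\top L^\top L x \, \ud \gamma_{0,\Sigma_s}(x) = \Tr(L^\top L \Sigma_s).
\]

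The crux is a Loewner comparison extracted from $\sep$-regularity: for any $\Sigma' \in \mbb S_{++}^D$ with $\|\Sigma'\|_{\rm op} \le 1$ and $\det \Sigma' \ge \sep$, each eigenvalue is at most $1$ and the smallest satisfies $\lambda_{\min}(\Sigma') = \det \Sigma' / \prod_{i<D} \lambda_i(\Sigma') \ge \sep/1^{D-1} = \sep$, so $\sep I_D \preceq \Sigma' \preceq I_D$. Applying this to both $\Sigma$ and $\Sigma_s$ yields $\Sigma_s \preceq I_D \preceq \sep^{-1} \Sigma$. Since $L^\top L \succeq 0$, this implies $\Tr(L^\top L \Sigma_s) \le \sep^{-1}\Tr(L^\top L \Sigma)$, i.e., $\|\nabla G(b)\|_{L^2(b_s)}^2 \le \sep^{-1} \|\nabla G(b)\|_b^2$ uniformly in $s \in [0,1]$.

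Combining the uniform bound with Cauchy--Schwarz $(\int_0^1 f \, \ud s)^2 \le \int_0^1 f^2 \, \ud s$ and the inequality from Lemma~\ref{lem:main} gives
\[
G(b) - G(\bar b) \le \frac{2}{\sep} \int_0^1 \|\nabla G(b)\|_{L^2(b_s)}^2 \, \ud s \le \frac{2}{\sep^2} \|\nabla G(b)\|_b^2,
\]
which is exactly the PL inequality with constant $C_{\msf{PL}} = \sep^2/4$. The main obstacle is the Loewner comparison $\Sigma_s \preceq \sep^{-1} \Sigma$: it uses both defining properties of $\cS_\sep$ in an essential way, since the operator norm bound alone only gives $\Sigma_s \preceq I_D$ and the determinant bound alone only gives $\Sigma^{-1} \preceq \sep^{-1} I_D$, and neither is sufficient by itself to convert the $L^2(b_s)$ norms into the $L^2(b)$ norm.
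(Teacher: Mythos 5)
Your proposal follows essentially the same route as the paper: Lemma~\ref{lem:main} combined with the variance inequality of Theorem~\ref{thm:varineq_bures} and Cauchy--Schwarz reduces the problem to comparing $\|\nabla G(b)\|_{L^2(b_s)}$ with $\|\nabla G(b)\|_b$ at a cost of $\sep^{-1}$, uniformly along the geodesic from $b$ to $\bar b$. Where the paper performs this comparison by writing $X_s=(1-s)X_0+sX_1$ for the optimal coupling, applying Jensen's inequality to the quadratic $x\mapsto\|(\tilde M-I_D)x\|^2$, and bounding the endpoint term via $\Tr(\bar\Sigma A)\le\|\bar\Sigma\Sigma^{-1}\|_{\rm op}\Tr(\Sigma A)\le\sep^{-1}\Tr(\Sigma A)$, you bound $\Tr(L^\top L\,\Sigma_s)$ uniformly in $s$ through the Loewner comparison $\Sigma_s\preceq I_D\preceq\sep^{-1}\Sigma$. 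These are estimates of the same nature and give the same constant, so I regard your argument as a mild variant of the paper's final step rather than a different method.

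The one step you should repair is the unproved assertion that $\bar b\in\cS_\sep$, which you invoke to place the whole geodesic inside $\cS_\sep$. What is available at this stage (from Proposition~\ref{prop:gaussian_barycenter}, via the Brouwer fixed-point construction or directly from the identity $\bar\Sigma=\int(\bar\Sigma^{1/2}\Sigma(\mu)\bar\Sigma^{1/2})^{1/2}\,\ud Q(\mu)$ together with $\sep I_D\preceq\Sigma(\mu)\preceq I_D$) is $\sep I_D\preceq\bar\Sigma\preceq I_D$, which only yields $\det\bar\Sigma\ge\sep^D$, not the membership condition $\det\bar\Sigma\ge\sep$; and deducing $\bar b\in\cS_\sep$ from convergence of GD would be circular, since that convergence rests on the PL inequality being proved. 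Fortunately your argument never uses the determinant bound along the geodesic: all you need is $\Sigma_s\preceq I_D$, which follows from $\|\bar\Sigma\|_{\rm op}\le 1$ together with Lemma~\ref{thm:geod_cvx_max_eigenvalue} (convexity of $\lambda_{\max}$ along the geodesic joining $b$ and $\bar b$), plus $I_D\preceq\sep^{-1}\Sigma$ from $b\in\cS_\sep$. With that substitution your proof is complete; note that the paper's own proof relies on the same fact $\|\bar\Sigma\|_{\rm op}\le 1$ through the bound $\|\bar\Sigma\Sigma^{-1}\|_{\rm op}\le\sep^{-1}$.
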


\begin{proof}
    For any $\gamma_{0,\Sigma} \in \cS_\sep$, the eigenvalues of $\Sigma$ are in  $[\sep,1]$.
    Let ${(\tilde b_s)}_{s\in [0,1]}$ be the constant-speed geodesic between $\tilde b_0 := b := \gamma_{0,\Sigma}$ and $\tilde b_1 := \bar b := \gamma_{0,\bar \Sigma}$.
Combining Lemma~\ref{lem:main} (with an additional use of the Cauchy-Schwarz inequality) and Theorem~\ref{thm:varineq_bures}, we get
    \begin{equation}
        \label{eq:int_proof_PL}
        G(b) - G(\bar b)
        \le \frac{2}{\sep} \int_0^1 \int \norm{\nabla G(b)}_2^2 \, \ud \tilde b_s \, \ud s.
    \end{equation}

Define a random variable $X_s\sim \tilde b_s$ and observe that 
$$
\int \norm{\nabla G(b)}_2^2 \, \ud \tilde b_s  =\E\norm{(\tilde M-I_D)X_s}_2^2\,,
\quad 
\text{where}\ 
\tilde M=\int \Sigma^{-1/2} {(\Sigma^{1/2} S \Sigma^{1/2})}^{1/2} \Sigma^{-1/2} \, \ud Q(\gamma_{0,S}).$$
Moreover, recall that $X_s=sX_1+(1-s)X_0$ where $X_0\sim \tilde b_0$ and $X_1\sim \tilde b_1$ are optimally coupled.
% $$
% M=\Sigma^{-1/2} {(\Sigma^{1/2} \Sigma^\star \Sigma^{1/2})}^{1/2} \Sigma^{-1/2}
% $$
Therefore, by Jensen's inequality, we have for all $s \in [0,1]$,
\begin{align*}
\E\norm{(\tilde M -I_D)X_s}_2^2&\le s\E\norm{(\tilde M-I_D)X_1}_2^2+(1-s)\E\norm{(\tilde M -I_D)X_0}_2^2\le \frac{1}{\sep}\E\norm{(\tilde M -I_D)X_0}_2^2\,,
% \\
% &\le \max(\norm{(\tilde M -I_D)M (\tilde M -I_D)^\dagger}^2,1)\E\norm{(\tilde M-I_D)X_0}_2^2
\end{align*}
where in the second inequality, we used the fact that
\begin{align*}
\E\norm{(\tilde M-I_D)X_1}_2^2=\Tr\bigl(\bar \Sigma (\tilde M-I_D)^2\bigr)
%&\le \norm{\Sigma^\star} \Tr\bigl({(\tilde M-I_D)}^2\bigr)\\
\le \norm{\bar \Sigma \Sigma^{-1}}_{\mathrm{op}} \Tr\bigl(\Sigma {(\tilde M-I_D)}^2\bigr)
\le \frac{1}{\sep}\E\norm{(\tilde M-I_D)X_0}_2^2 \, .
\end{align*}
% We have established in Lemma~\ref{lem:reg_from_eigvals} that $\norm{M},\norm{M^{-1}}\le \lambda_{\max}/\lambda_{\min}$. This implies
% $$
% \norm{M-I_D}\le \norm{M}+1\le 2\frac{\lambda_{\max}}{\lambda_{\min}}
% $$
% and
% $$
% \norm{I_D-M^{-1}}\le \norm{M^{-1}}+1\le 2\frac{\lambda_{\max}}{\lambda_{\min}}
% $$
%It yields that for all $t \in [0,1]$
%$$
%\E\norm{(\tilde M -I_D)X_t}_2^2\le \frac{1}{\sep}\E\norm{(\tilde M -I_D)X_0}_2^2\,.
%$$
%Combining this 

Together with~\eqref{eq:int_proof_PL}, it yields
$$
G(b) - G(\bar b)\le \frac{2}{\sep^2} \E\norm{(\tilde M -I_D)X_0}_2^2=\frac{2}{\sep^2}\norm{\nabla G(b)}_b^2\,.
$$
%\int \norm{\nabla G(b)}_2^2 \ud b = \norm{\nabla F(b)}_b^2
%$$
%Since
%$$
%\E\norm{(\tilde M -I_D)X_0}_2^2=\int \norm{\nabla F(b)}_2^2 \ud b = \norm{\nabla F(b)}_b^2 \ ,
%$$
%this completes the proof.
\end{proof}

\bibliographystyle{aomalpha}
\bibliography{extracted}

\end{document}